\newtheorem{lemma}{Lemma}[section]
\newtheorem{theorem}[lemma]{Theorem}
\newtheorem{prop}[lemma]{Proposition}
\newtheorem{corollary}[lemma]{Corollary}
\newtheorem{remark}[lemma]{Remark}
\newcommand{\ls}{\lesssim}
\newcommand{\D}{\displaystyle}
\begin{document}

\title[Scattering for 1D cubic NLS and singular vortex dynamics]{Scattering for 1D cubic NLS and \\ singular vortex dynamics}
\author[V. Banica]{Valeria Banica}
\address[V. Banica]{D\'epartement de Math\'ematiques\\ Universit\'e
  d'Evry\\ France} 
\email{Valeria.Banica@univ-evry.fr}
\author[L. Vega]{Luis Vega}
\address[L. Vega]{Departamento de Matematicas, Universidad del Pais Vasco, Spain} 
\email{luis.vega@ehu.es}


\def\Tend{{\longrightarrow}}
\def\tend{{\rightarrow}}
\newcommand{\R}{\mathbb{R}}
\newcommand{\nrL}[2]{{\| #1 \|_#2}}
\newcommand{\pq}[1]{{\| #1 \|_{L^p(\R,L^q(\R^n))}}}

\begin{abstract}In this paper we study the stability of the self-similar solutions of the binormal flow, which is a model for the dynamics of vortex filaments in fluids and super-fluids. These particular solutions $\chi_a(t,x)$ form a family of evolving regular curves of $\mathbb R^3$ that develop a singularity in finite time, indexed by a parameter $a>0$. 
We consider curves that are small regular perturbations of $\chi_a(t_0,x)$ for a fixed time $t_0$. In particular, their curvature is not vanishing at infinity, so we are not in the context of known results of local existence for the binormal flow. Nevertheless, we construct in this article solutions of the binormal flow with these initial data. 
 Moreover, these solutions become also singular in finite time. Our approach uses the Hasimoto transform what leads us to study the long-time behavior of a 1D cubic NLS equation with time-depending coefficients and small regular perturbations of the constant solution as initial data. We prove asymptotic completeness for this equation in appropriate function spaces. 
\end{abstract}

\maketitle


\tableofcontents

\section{Introduction}

In this work we complete the stability properties obtained in our previous paper \cite{BV} of the selfsimilar solutions of the binormal flow of curves
\begin{equation}\label{binormal}
\chi_t=\chi_x\land\chi_{xx}.
\end{equation}
Here $\chi=\chi(t,x)\in\mathbb R^3$, $x$ denotes the arclength parameter and $t$ the time variable. Using the Frenet frame, the above equation can be written as
\begin{equation*}\label{binormal1}
\chi_t=cb,
\end{equation*}
where $c$ is the curvature of the curve and $b$ its binormal. This geometric flow was proposed by Da Rios in 1906 \cite{DaR} as an approximation of the evolution 
of a vortex filament in a 3-D incompressible inviscid fluid. Simple explicit and relevant examples of solutions of \eqref{binormal} are the straight lines, that remain stationary, the circles, that move in the orthogonal
direction of the plane where they are contained and with velocity the inverse of the radius, and the helices that,  besides exhibiting the same rigid motion of the circles, rotate with a constant velocity around their axis as a corkskrew. We refer the reader to  \cite{AlKuOk}, \cite{Ba} and \cite{Sa} for an analysis and discussion about the limitations of this model and to \cite{Ri} for a survey about Da Rios' work. 

The selfsimilar solutions with respect to scaling of \eqref{binormal} are easily found by first fixing the ansatz 
\begin{equation}\label{ansatz}
\chi(t,x)=\sqrt{t}\,G\left(\frac {x}{\sqrt{t}}\right),
\end{equation}
and then solving the corresponding ordinary differential equation. In geometric terms the solutions are determined by a curve with the properties
\begin{equation*}\label{ctauselfsim}
c(x)=a,\quad\qquad\tau(x)=\frac{x}{2},
\end{equation*}
for a parameter $a>0$. Calling $G_a$ the corresponding curve and $T_a$ its unit tangent, it is rather easy to see that $T_a(x)$ has a limit $A_a^{\pm}$ 
as  $x$ goes to $\pm\infty$, so that $G_a$ approaches asymptoticaly to two lines. In the neighborhood of $x=0$ the curve is similar to a circle of radius $1/a$ and for large $s$ the curve has a helical shape of increasing pitch. Notice that equation \eqref{binormal} is reversible
in  time. So if at time $t=1$ the filament is given by $\chi_a(1,x)=G_a(x)$ the evolution $\chi_a(t,x)$ for $0<t<1$  is given by \eqref{ansatz}. From this expression we see that the two lines at infinity remain fixed. However,  the helices transport the ``energy"  from infinity towards the origin so that the overall effect is an increasing of the curvature, that becomes $a/\sqrt t$. The final configuration at time $t=0$ is given by the two lines determined by $A_a^{\pm}$. That these two lines are different is not so straightforward. It was proved in \cite{SJL} that
$$\sin\frac\theta 2=e^{-\frac{a^2}{2}},$$
where $\theta$ is the angle between the vectors $A^+_a$ and $-A^-_a$. As a consequence starting with $G_a$, a real analytic curve at $t=1$, a corner is created at time $t=0$. This particular solution is studied numerically in \cite{DGV}. One of the conclusions of that paper is that the process of concentration around the origin is very stable. Moreover the similarity between the numerical solutions and those that appear experimentally in a colored fluid traversing a delta wing is quite remarkable, see figure 1.1 in  \cite{DGV}.\\

The stability results proved in \cite{BV} are based on a tranformation due to Hasimoto  \cite{Ha}. He defines the so-called ``filament function" $\psi$ of  a regular solution of \eqref{binormal} that has strictly  positive curvature at all points. The precise expression is given by 
\begin{equation*}\label{Hasimoto}
\psi(t,x)=c(t,x)\exp{\left\{i\int\limits_0^x \tau(t,x')dx'\right\}}.
\end{equation*}
Then it is proved in \cite{Ha} that $\psi$ solves the nonlinear Schr\"odinger equation  
\begin{equation}\label{+Hasimoto-eq}
i\psi_t+\psi_{xx}+\frac{1}{2}\left(|\psi|^2-A(t)\right)\psi=0,
\end{equation}
with
\begin{equation*}\label{Hasimoto-coeff}
A(t)=\left(\pm2\frac{c_{xx}-c\,\tau^2}{c}+c^2\right)(t,0).
\end{equation*}
Notice that in \eqref{+Hasimoto-eq}, the non-linear term appears with the focusing sign. The opposite case, the defocusing one, can be obtained in a similar way by assuming that the tangent vector $\chi_s$ has a constant hyperbolic length instead of the constant euclidean length as in \eqref{binormal}. This equation has to be changed accordingly, see \cite{BV} and \cite{Pa} for the details.

The particular selfsimilar solution $\chi_a(t,x)$ of \eqref{binormal} has as curvature and torsion
$$c_a(t,x)=\frac{a}{\sqrt{t}},\quad\qquad\tau_a(t,x)=\frac{x}{2t},$$
so its filament function is
$$\psi_a(t,x)=a\frac{e^{i\frac{x^2}{4t}}}{\sqrt{t}}.$$
This function is a solution of \eqref{+Hasimoto-eq} if 
$$A(t)=\frac{a^2}{t}.$$
Notice that neither $\psi_a(t)$ nor any of its derivatives are in $L^2$, and that $\psi_a(0)=ae^{i\frac\pi 4}\delta_{x=0}$. This is a too singular initial data for the available theory (\cite{VaVe}, \cite{Gr}, \cite{Ch}, \cite{BV0}). 
Therefore one might think that this particular solution is not related to any natural energy. However, this is not the case, as can be proved by considering the pseudo-conformal transformation. Given $\psi$ solution of\footnote{For sake of simplicity we omit the $1/2$ factor in \eqref{+Hasimoto-eq}, that can be resorbed by a scaling argument.} 
\begin{equation}\label{Hasimoto-eq}
i\psi_t+\psi_{xx}\pm\left(|\psi|^2-\frac{a^2}{t}\right)\psi=0,
\end{equation}
we define a new unknown $v$ as
\begin{equation}\label{calT}
\psi(t,x)=\mathcal {T}v(t,x)=\D\frac{e^{i\frac{x^2}{4t}}}{\sqrt{t}}\overline{v}\left(\frac 1t,\frac xt\right).
\end{equation}
Then $\,v\,$ solves 
\begin{equation}\label{GP1}
iv_t+v_{xx}\pm\D\frac 1t\left(|v|^2-a^2\right)v=0,\\
\end{equation}
and $v_a=a$ is a particular solution corresponding to $\psi_a$. A natural quantity associated to \eqref{GP1} is the normalized energy
\begin{equation*}\label{energy}E(v)(t)=\frac{1}{2}\int |v_x(t)|^2\,dx\mp\frac{1}{4t}\int(|v(t)|^2-a^2)^2\,dx.
\end{equation*}
An immediate calculation gives that
\begin{equation*}\label{GP2}\partial_{t}E(v)(t)\mp\frac{1}{4t^2}\int(|v|^2-a^2)^2\,dx=0,
\end{equation*}
and in particular $E(v_a)=0.$\\

The first stability result we give in \cite{BV} is the proof of the existence for small $a$ of a modified wave operator for
solutions of \eqref{Hasimoto-eq}  that at time $t=1$ are close to the constant $v_a=a.$ Namely, we prove that if we fix an asymptotic
state $u_+$ small in $L^1\cap L^2$ there is a a unique solution of \eqref{Hasimoto-eq} for $t>1$ that behaves as time approaches infinity as
\begin{equation*}\label{1.14}
v_1(t,x)=a+e^{\pm i a^2\log t}e^{it\partial_x^2}u_+(x).
\end{equation*}
Here $e^{it\partial^2_x}$ denotes the free propagator. Therefore the free dynamics has to be modified by the long-range factor $e^{\pm i a^2\log t}$, due to the non-integrability of the coefficient $1/t$ that appears in \eqref{GP1}. This is similar to the framework of long range wave operators for cubic 1-d NLS (\cite{Ozawa},\cite{Ca},\cite{HaNa}). Here the situation is different since the $L^\infty$-norm of the functions we are working with is not decaying as $t$ goes to infinity, being just bounded. A link could also be made with the asymptotic results for the Gross-Pitaevskii equation around the constant solution (\cite{GNT}, \cite{GNT2}), but still our situation is not the same, and we treat the linearized equation in a different way. 

The condition $u_+\in L^1$ will be relaxed in this article to the weaker one that $\hat u_+(\xi)$ times positive powers of $|\xi|$ is bounded in a neighborhood of the origin. As we shall see, this latter assumption is the one  that naturally appears for proving the asymptotic completeness of \eqref{GP1}. Moreover, we shall prove in Theorem A.1 of Appendix A the existence of the modified wave operator by assuming this weaker property.

Once the solution $v$ is constructed we recover $\psi$ from \eqref{calT}. The result proved in \cite{BV} is that given $u_+$ as before, there exists a unique solution $\psi(t,x)$ of \eqref{Hasimoto-eq} such that  $\psi$ behaves as $\psi_1$ as $t$ goes to zero, with
$$\psi_1(t,x)=a\frac{e^{i\frac{x^2}{4t}}}{\sqrt{t}}+\frac{e^{\pm ia^2\log t}}{\sqrt{4\pi i}}\hat {\overline{u}}_+\left(-\frac x2\right),$$
The precise statement about the behavior of $\psi-\psi_1$ can be found in Corollary 1.2 of \cite{BV}. However, it is important to point out two facts. Firstly, the rate of convergence is $\|\psi-\psi_1\|_{L^2}<Ct^{\frac 14}$. And secondly, that although the singular term $a\frac {e^{i\frac{x^2}{4t}}}{\sqrt t}$ has a limit, the correction does not. As a consequence neither $\psi_1$ nor $\psi$ have a trace at $t=0$, no matter how good $u_+$ is. Notice also that the condition about the boundedness of $\hat u_+$ is understood here as that the perturbation of the singular solution $\psi_a$ has to be bounded close to the point where the singularity is created. 

The next result in \cite{BV} is the construction of solutions of \eqref{binormal} that are close to $\chi_a$. This is done by integrating the Frenet system using the filament function given by $\psi$. The role played by the euclidean geometry is crucial at this step, because by construction the binormal vector has unit euclidean length. Therefore to conclude the existence of  a trace for $\chi(t)$ at $t=0$ it is enough that the curvature, given by $|\psi(t,x)|$, is integrable at time zero. Although this is obtained by quite general $u_+$, even though there is not a trace for $\psi$ at $t=0$ as we already said, the question of the existence of a corner is much more delicate. In order to get it, it is necessary to improve the rate of convergence of $\psi-\psi_1$. This is done by assuming that $|\xi|^{-2}\hat u_+(\xi)$ is locally in $L^2$, see Theorem 1.5 in \cite{BV}.\\

Our main result in this paper is to prove the asymptotic completeness for solutions of \eqref{GP1} that at time $t=1$ are close to the constant $a$. In order to give the precise statement we have to make several transformations of \eqref{GP1}. First of all we write
\begin{equation}\label{w}v=w +a,
\end{equation}
so that $w$ has to be a solution of
\begin{equation}\label{GP}
iw_t+w_{xx}=\mp\D\frac 1t\left(|a+w|^2-a^2\right)(a+w).\\
\end{equation}
The right hand side of the above equation has two linear terms. One is $\mp\frac{a^2}{t}w$ that is resonant, and it is the one that creates the logarithmic correction of the phase . The other one is similar, but involves $\bar w$ and therefore it is not resonant. Then, we define $u$ as 
\begin{equation}\label{u}
u(t,x)=w(t,x) e^{\mp ia^2\log t}.
\end{equation}
As a consequence $u$ has to solve
$$iu_t=\left(iw_t\pm\frac{a^2}{t}w\right)e^{\mp ia^2\log t}=\left(-w_{xx}\mp \frac{|w|^2w+a(w^2+2|w|^2)}{t}\mp\frac{a^2}{t}\overline{w}\right)e^{\mp ia^2\log t},$$
so 
\begin{equation}\label{nonlin}
iu_t+u_{xx}\pm\frac{a^2}{t^{1\pm 2ia^2}}\overline{u}+\frac{F(u)}{t}=0,
\end{equation}
with $F(u)$ given by
\begin{equation}\label{F}
F(u)= F (w e^{\mp ia^2\log t})=\pm\frac{|w|^2w+a(w^2+2|w|^2)}{t}e^{\mp ia^2\log t}.
\end{equation}
As we see $F$ involves just quadratic and cubic terms of $u$.

Also, we need to introduce some auxiliary function spaces.  For fixed $\gamma$ and $t_0$ we define the space ${X_{t_0}^\gamma}$ of functions $f(x)$ such that the norm
\begin{equation}\label{Xtau}
\|f\|_{X_{t_0}^\gamma}=\frac{1}{t_0^\frac 14}\|f\|_{L^2}+\frac{t_0^\gamma}{\sqrt{t_0}}\||\xi|^{2\gamma}\hat{f}(\xi)\|_{L^\infty(\xi^2\leq 1)}
\end{equation}
is bounded, and ${Y_{t_0}^\gamma}$ the space of
functions $g(t,x)$ such that the norm
\begin{equation}\label{Ytau}
\|g\|_{Y_{t_0}^\gamma}=\sup_{t\geq t_0}\,\left(\frac{1}{t_0^\frac 14}\|g(t)\|_{L^2}+\left(\frac{t_0}{t}\right)^{a^2}\frac{t_0^\gamma}{\sqrt{t_0}}\||\xi|^{2\gamma}\hat{g}(t,\xi)\|_{L^\infty(\xi^2\leq 1)}\right)
\end{equation}
is finite.

We have the following result.
\begin{theorem}\label{theorem1}
Let $0\leq \gamma<\frac 14$, $0<a$ and let $u(1)$ be a function in $X_1^\gamma$ small with respect to $a$. Then there exists a unique global solution $u\in Z^\gamma=Y_1^\gamma\cap L^4((1,\infty),L^\infty)$ of equation \eqref{nonlin} with $u(1)$ initial data at time $t=1$, and
$$\|u\|_{Z^\gamma}\leq C(a)\, \|u(1)\|_{X_1^\gamma}.$$
Moreover, this solution scatters in $L^2$: there exists $f_+\in L^2$ for which
\begin{equation*}
\|u(t)-e^{i(t-1)\partial_x^2}f_+\|_{L^2}\leq \frac{C(a,\delta)}{t^{\frac14-(\gamma+\delta)}}\,\|u(1)\|_{X_1^\gamma}\underset{t\tend\infty}{\longrightarrow} 0,
\end{equation*}
for any $0<\delta<1/4-\gamma$. Finally, the asymptotic state  $f_+$ satisfies for all $\xi^2\leq 1$ the estimate
$$|\xi|^{2(\gamma+\delta)}|\hat{f_+}(\xi)|\leq C(a,\delta)\,\|u(1)\|_{X_1^\gamma}.$$
\end{theorem}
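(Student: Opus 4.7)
The plan is to solve \eqref{nonlin} by a Banach fixed point for the Duhamel map
\begin{equation*}
\Phi(u)(t) = e^{i(t-1)\partial_x^2} u(1) - i \int_1^t e^{i(t-s)\partial_x^2} \left[ \pm \frac{a^2}{s^{1\pm 2ia^2}} \overline{u}(s) + \frac{F(u(s))}{s} \right] ds
\end{equation*}
on a small ball of $Z^\gamma = Y_1^\gamma \cap L^4((1,\infty), L^\infty)$, smallness being enforced jointly by $a$ and $\|u(1)\|_{X_1^\gamma}$. The inhomogeneous source splits into a non-resonant linear piece and a quadratic/cubic piece $F(u)/s$, and I would treat these separately. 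The linear evolution of the initial data already sits in $Z^\gamma$ with the correct bound: conservation of $L^2$ handles one part of $Y_1^\gamma$; Strichartz on the admissible pair $(4,\infty)$ in one dimension gives the $L^4 L^\infty$ component; and the unimodular Fourier symbol $e^{-it\xi^2}$ preserves the weighted $L^\infty_\xi$ piece of $X_1^\gamma$ with no polynomial time factor.

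For the quadratic and cubic contributions coming from $F(u)/s$, I would estimate pointwise in time in $L^2_x$ by combining $\|u(s)\|_{L^\infty}$ and $\|u(s)\|_{L^2}$ and then integrate against the extra $1/s$ decay: the cubic $|u|^2u/s$ is controlled by $\int \|u(s)\|_{L^\infty}^2 \|u(s)\|_{L^2}\,ds/s \ls \|u\|_{L^4L^\infty}^2 \|u\|_{L^\infty L^2}$ thanks to $1/s \in L^2(1,\infty)$, while the quadratic $aw^2/s$ and $a|w|^2/s$ lose one copy of $u$ at the price of a small prefactor $a$ and close using $1/s \in L^{4/3}(1,\infty)$. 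The weighted Fourier norm of these nonlinearities is controlled by iterated convolution estimates for $\hat u$, and the constraint $\gamma < 1/4$ enters precisely here: the convolution of two profiles with a local $|\xi|^{-2\gamma}$ singularity at $\xi=0$ remains locally bounded when multiplied by $|\xi|^{2\gamma}$ exactly when $2\gamma < 1/2$.

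The main obstacle is the non-resonant linear term $\pm a^2 \bar u/s^{1\pm 2ia^2}$, whose coefficient is \emph{not} integrable in $s$, so the argument must exploit oscillation. On the Fourier side this term contributes
\begin{equation*}
\int_1^t e^{-i(t-s)\xi^2}\,\overline{\hat u(s,-\xi)}\,\frac{ds}{s^{1\pm 2ia^2}},
\end{equation*}
and since $u$ is morally a perturbation of a free solution, the ansatz $\hat u(s,-\xi)\approx e^{-is\xi^2}\hat u(1,-\xi)$ produces an oscillating phase $e^{2is\xi^2}$ in the integrand. Integration by parts in $s$ then gains a factor $1/(s\xi^2)$: the $\xi^{-2}$ singularity is absorbed by the local $|\xi|^{2\gamma}$ control built into $X_1^\gamma$, while the $1/s$ decay from the boundary term makes the contribution time-integrable after iteration. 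This is also where the factor $(t_0/t)^{a^2}$ in the definition of $Y_1^\gamma$ originates: iterating this linear coupling allows a slow polynomial growth $(t/t_0)^{a^2}$ in the weighted Fourier norm, which stays harmless and compatible with the contraction as long as $a$ is small.

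Once the fixed point is obtained, scattering is an immediate consequence of examining the Duhamel tail. Setting
\begin{equation*}
f_+ := u(1) - i \int_1^\infty e^{-i(s-1)\partial_x^2}\left[\pm\frac{a^2}{s^{1\pm 2ia^2}}\bar u(s) + \frac{F(u(s))}{s}\right] ds,
\end{equation*}
the very estimates used to close the contraction, now applied to the integral over $(t,\infty)$, yield $\|u(t)-e^{i(t-1)\partial_x^2}f_+\|_{L^2}$ decaying like $t^{-(1/4-\gamma-\delta)}$, with the loss $\delta>0$ coming from a Hölder interpolation needed to trade the weighted Fourier control near $\xi=0$ for an integrable $L^2_x$ tail bound against the slow $t^{a^2}$ growth. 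Finally, the low-frequency bound $|\xi|^{2(\gamma+\delta)}|\hat f_+(\xi)|\le C(a,\delta)\|u(1)\|_{X_1^\gamma}$ falls out of the same integration-by-parts analysis applied to the tail integral, completing the proof.
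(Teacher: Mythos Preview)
Your proposal diverges from the paper's architecture at exactly the point that matters, and I believe it does not close as written.

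The paper does \emph{not} run the fixed point against the free propagator $e^{i(t-1)\partial_x^2}$. Instead it first analyzes the full linear equation \eqref{lin} and builds the linear propagator $S(t,t_0)$ from it (Lemmas~2.1--2.2, Propositions~2.6, 2.8, 2.11), and only then writes the Duhamel formula for \eqref{nonlin} as
\[
u(t)=S(t,1)u(1)+\int_1^t S(t,\tau)\,\frac{iF(u(\tau))}{\tau}\,d\tau,
\]
so that the inhomogeneity is genuinely quadratic and cubic. Your scheme leaves the non-resonant linear term $a^2\overline u/s^{1\pm 2ia^2}$ in the source, and your proposed mechanism to handle it --- integrate by parts in $s$ to gain $1/(s\xi^2)$ and ``absorb the $\xi^{-2}$ singularity by the local $|\xi|^{2\gamma}$ control'' --- does not work: since $\gamma<\tfrac14$, the weight $|\xi|^{2\gamma}$ cannot compensate $|\xi|^{-2}$, neither for $L^2_\xi$ nor for the weighted $L^\infty_\xi$ norm. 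The paper's Lemma~2.7 is precisely this integration by parts and yields exactly the $1/(\xi^2 t_1)$ factor you describe, but it is used only in the regime $\xi^2\gtrsim 1/t$, where that factor is bounded. For small frequencies the paper relies on Lemma~2.2, whose proof is an ODE diagonalization and an optimized energy estimate showing
\[
|\hat u(t,\xi)|\le\Bigl(C(a)+\frac{C(a,\delta)}{(\xi^2 t_0)^\delta}\Bigr)\bigl(|\hat u(t_0,\xi)|+|\hat u(t_0,-\xi)|\bigr),
\]
which is what actually keeps $\|u(t)\|_{L^2}$ bounded and produces the correct $t^{-(1/4-\gamma-\delta)}$ scattering rate. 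Your sketch has no analogue of this low-frequency control.

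Two further points. First, the theorem is stated for arbitrary $a>0$ with $u(1)$ small relative to $a$; your argument appeals to ``$a$ small'' to tame the $(t/t_0)^{a^2}$ growth, which is not the hypothesis. In the paper this growth is part of the $Y_1^\gamma$ norm by design, and the fixed point closes because the genuinely nonlinear terms carry an extra $1/\tau^{a^2}$ (see the proof of Proposition~3.1). Second, the low-frequency bound on $\hat f_+$ does not ``fall out'': Proposition~3.5 is a bootstrap in which $\hat f_+$ appears on both sides through the quadratic terms, and the paper resolves this with a maximal-function argument (Lemma~3.6 and the operator $M$) that is substantially more than a reapplication of the tail estimates.
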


To obtain the theorem, we first study the linearized equation 
\begin{equation}\label{lin}
iu_t+u_{xx}\pm \frac{a^2}{t^{1\pm 2ia^2}}\overline{u}=0,\\
\end{equation}
with initial data $u(t_0,x)$ at time $t_0\geq 1$. We prove that $u(t)$ behaves for large times like a free Schr\"odinger evolution. The only difference is that the Fourier zero-mode of $u(t)$ can become singular. Then, by perturbative methods, we deduce the asymptotic completeness for the nonlinear equation \eqref{nonlin}. The main part of our proof uses Fourier analysis and exploits particularly the non-resonant structure of $\overline{u}$ in \eqref{lin}. This is done by oscillatory integral techniques and simple integration by parts arguments (see in particular Lemma \ref{IBP} below).

 As we see, even if at time $t=1$ we are assuming that $\hat u(1)$ remains bounded in a neighborhood of the origin, we cannot prove a similar property for the asymptotic state $f_+$. This is not just a technical question. In Appendix B2 we shall prove that if $xu(1)$ is in $L^2$, so that
 \begin{equation*}\label{phi}
 \phi(t)=\int^{\infty}_{-\infty}\, u(t,x)\,dx
 \end{equation*}
 is well defined for all $t>1$, then under some conditions on $u(1)$,
 $$|\phi(t)|\geq C \log t.$$
 This property is rather easy to obtain, at least at a formal level, for the linearized equation
 \begin{equation}\label{linearized}
iw_t+w_{xx}=\mp\frac{a^2}{t}(w+\bar w).\\
\end{equation}
In fact, call $y(t)=\Re\int^{\infty}_{-\infty}\, w(t,x)\,dx$ and $z(t)=\Im\int^{\infty}_{-\infty}\, w(t,x)\,dx$, then
$$ i y'(t)-z'(t)=\mp2\frac{a^2}{t}y(t).$$
Hence $y(t)=y(1)$ and $z(t)=z(1)\pm2a^2y(1)\log t$.\\

Our next step is to understand the above result in terms of the filament function $\psi(t,x)$. From \eqref{calT},  \eqref{w}, and  \eqref{u} we have for $0<t\leq 1$
 \begin{equation}\label{1.30}
 \psi(t,x)=a\frac{e^{i\frac{x^2}{4t}}}{\sqrt {t}}+ e^{\pm ia^2\log t}\,\mathcal {T} u(t,x).
 \end{equation}
Therefore
$$\psi(1,x)=ae^{ix^2}+\psi_1(x),$$
with $\psi_1(x)=e^{ix^2}u(1,x)$. For simplicity we will impose $\psi_1\in L^1\cap L^2$ to fullfil the hypothesis $|\xi|^{2\gamma}\widehat{u}(1,\xi)\in L^\infty(\xi^2\leq 1)\cap L^2$ needed in Theorem \ref{theorem1} with $\gamma=0$. Then it will follow the existence of an $f_+\in L^2$ such that $u(t)$ behaves like $e^{i(t-1)\partial_x^2}f_+$. Now, on the one hand, the pseudo-conformal transform of $e^{i(t-1)\partial_x^2}f_+$ is the free evolution of $\frac{1}{\sqrt{4\pi i}}\,\widehat{e^{i\partial_x^2}\,\overline {f_+}}\left(-\frac{\cdot}{2}\right)$.
On the other hand $\mathcal{T}$ is an isometry of $L^2$. As a consequence we obtain from Theorem \ref{theorem1} the following scattering result.

\begin{theorem}\label{theorem2}
Let $0<a$ and let $\psi_1$ be a small function in $L^1\cap L^2$ with respect to $a$. Then there exists a unique solution $\psi$ of equation \eqref{Hasimoto-eq} for $0<t\leq 1$ with
 \begin{equation*}\label{1.31}
 \psi(1,x)=ae^{i\frac{x^2}{4}}+ \psi_1(x),
 \end{equation*}
 such that $ \psi(t,x)-a\frac{e^{i\frac{x^2}{4t}}}{\sqrt{t}}\in L^\infty((0,1),L^2)\cap  L^4((0,1),L^\infty).$
 Moreover, there exists $\psi_+\in L^2$ such that 
 \begin{equation*}\label{1.32}
 \left\| \psi(t,x)-a\frac{e^{i\frac{x^2}{4t}}}{\sqrt{t}}-e^{\pm ia^2\log t}e^{it\partial_x^2}\psi_+(x)\right\|_{L^2} \leq C(a,\delta)\,t^{\frac14-\delta}\|\psi_1\|_{L^1\cap L^2},
\end{equation*}
for any $0<\delta<1/4$,
and for $|x|\leq 2$ 
 \begin{equation*}\label{1.33}
|x|^{2\delta}|\psi_+(x)| \leq C(a,\delta)\,\|\psi_1\|_{L^1\cap L^2}.
 \end{equation*}
\end{theorem}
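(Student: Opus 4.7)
The strategy is to reduce Theorem~\ref{theorem2} to Theorem~\ref{theorem1} through the chain of substitutions $\psi\leftrightarrow v\leftrightarrow w\leftrightarrow u$ that was used to pass from \eqref{Hasimoto-eq} to \eqref{nonlin}. The structural point is that the pseudo-conformal transform $\mathcal{T}$ of \eqref{calT} exchanges the time intervals $(0,1]$ and $[1,\infty)$ while acting as an $L^2$-isometry in the space variable at each fixed time. Thus the asymptotic-at-$\infty$ conclusions of Theorem~\ref{theorem1} for $u(s)$ translate directly into the asymptotic-at-$0$ conclusions required for $\psi(t)$ under the rule $s=1/t$.

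At $t=1$, formulas \eqref{calT}, $v=w+a$, and \eqref{u} (using $\log 1=0$) give $u(1,x)=e^{ix^2/4}\overline{\psi_1(x)}$. Hence $\|u(1)\|_{L^2}=\|\psi_1\|_{L^2}$ and $\|\hat u(1)\|_{L^\infty}\le\|\psi_1\|_{L^1}$, so $u(1)\in X_1^0$ with $\|u(1)\|_{X_1^0}\le C\|\psi_1\|_{L^1\cap L^2}$, and the smallness hypothesis of Theorem~\ref{theorem1} with $\gamma=0$ is met. That theorem then produces a unique $u\in Z^0$ and an $f_+\in L^2$ with $\|u(s)-e^{i(s-1)\partial_x^2}f_+\|_{L^2}\le C(a,\delta)\,s^{-1/4+\delta}\|\psi_1\|_{L^1\cap L^2}$ and $|\xi|^{2\delta}|\hat f_+(\xi)|\le C(a,\delta)\|\psi_1\|_{L^1\cap L^2}$ for $\xi^2\le 1$. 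I then define $\psi$ on $(0,1]$ by inverting the substitutions, which is precisely representation \eqref{1.30}; the changes of variables $y=x/t$ and $s=1/t$ give
\[
\|\mathcal{T}u(t,\cdot)\|_{L^2_x}=\|u(1/t,\cdot)\|_{L^2_y},\qquad \|\mathcal{T}u\|_{L^4_t((0,1),L^\infty_x)}=\|u\|_{L^4_s((1,\infty),L^\infty_y)},
\]
which place $\psi-ae^{ix^2/(4t)}/\sqrt t$ in $L^\infty_t((0,1),L^2)\cap L^4_t((0,1),L^\infty)$, and a direct check shows that $\psi$ solves \eqref{Hasimoto-eq} with the prescribed data at $t=1$.

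To identify $\psi_+$ I use the pseudo-conformal identity for free Schr\"odinger flows. Writing $e^{i(s-1)\partial_x^2}f_+=e^{is\partial_x^2}g$ with $g=e^{-i\partial_x^2}f_+$, a stationary-phase evaluation of $e^{is\partial_x^2}g$ at $(y,s)=(x/t,1/t)$ as $t\to 0^+$, together with the identity $\overline{\hat g(\xi)}=\widehat{\bar g}(-\xi)$, yields
\[
\mathcal{T}\bigl(e^{i(\,\cdot\,-1)\partial_x^2}f_+\bigr)(t,x)=e^{it\partial_x^2}\psi_+(x),\qquad \psi_+(x)=\tfrac{1}{\sqrt{4\pi i}}\,\widehat{e^{i\partial_x^2}\overline{f_+}}(-x/2)\in L^2.
\]
Since $\mathcal{T}(u_1)-\mathcal{T}(u_2)=\mathcal{T}(u_1-u_2)$ by direct inspection, the $L^2$ isometry of $\mathcal{T}$ gives
\[
\Bigl\|\psi(t)-a\tfrac{e^{ix^2/(4t)}}{\sqrt t}-e^{\pm ia^2\log t}e^{it\partial_x^2}\psi_+\Bigr\|_{L^2}=\|u(1/t)-e^{i(1/t-1)\partial_x^2}f_+\|_{L^2},
\]
and Theorem~\ref{theorem1} evaluated at $s=1/t$ produces exactly the announced $C(a,\delta)\,t^{1/4-\delta}\|\psi_1\|_{L^1\cap L^2}$ rate. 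The explicit formula for $\psi_+$ further gives $|\psi_+(x)|=(4\pi)^{-1/2}|\hat f_+(x/2)|$, so for $|x|\le 2$ (hence $(x/2)^2\le 1$) the bound $|\xi|^{2\delta}|\hat f_+(\xi)|\le C\|u(1)\|_{X_1^0}$ transports to $|x|^{2\delta}|\psi_+(x)|\le C(a,\delta)\|\psi_1\|_{L^1\cap L^2}$.

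The only genuine difficulty is bookkeeping: tracking the complex conjugations and the time inversion through $\mathcal{T}$ so that the image of the free flow produced by Theorem~\ref{theorem1} is correctly recognized as a bona fide $e^{it\partial_x^2}\psi_+$, and so that the logarithmic phase $e^{\pm ia^2\log t}$ aligns with the one appearing in \eqref{1.30}. Uniqueness of $\psi$ in its announced class is transported by $\mathcal{T}$ to uniqueness of $u$ in $L^\infty_tL^2\cap L^4_tL^\infty$, which follows from a standard Strichartz contraction for \eqref{nonlin} on short time intervals together with a continuation argument using the smallness of $u(1)$.
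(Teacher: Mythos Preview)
Your proposal is correct and follows precisely the route the paper sketches in the paragraph preceding Theorem~\ref{theorem2}: verify $u(1)\in X_1^0$ from $\psi_1\in L^1\cap L^2$, invoke Theorem~\ref{theorem1}, and transport everything back through the pseudo-conformal transform $\mathcal{T}$, using that $\mathcal{T}$ is an $L^2$ isometry and maps free evolutions to free evolutions with $\psi_+(x)=\frac{1}{\sqrt{4\pi i}}\widehat{e^{i\partial_x^2}\overline{f_+}}(-x/2)$. You have in fact filled in several details (the explicit $L^4_tL^\infty$ invariance of $\mathcal{T}$ under $s=1/t$, the reduction of the $|x|^{2\delta}|\psi_+(x)|$ bound to the $|\xi|^{2\delta}|\hat f_+(\xi)|$ bound, and the uniqueness argument) that the paper leaves implicit.
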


As we shall see in Corollary \ref{nonlinearascHs}, if $u_1$ is regular in terms of Sobolev spaces so is the solution $u(t)$ given in Theorem \ref{theorem1}. So in particular $u(t)$ is uniformly bounded in terms of the size of $u_1$. Then from \eqref{1.30} we conclude that if $u_1$ is small enough with respect to $a$ then $\frac{a}{2\sqrt{t}}\leq |\psi (t,x)|\leq \frac{3a}{2\sqrt{t}}$, and therefore $ |\psi (t,x)|$ becomes singular as $t$ goes to zero. Hence we can use the Frenet system to construct $\chi(t,x)$ a regular solution of \eqref{binormal} for $0<t\leq 1$, and the corresponding Frenet frame, that will become also singular as $t$ approaches to zero (see for instance \cite{NaShVeZe} or the Appendix of \cite{BV}). Notice also that this argument works in both settings, focusing and defocusing.   
Moreover, due to the fact that in the focusing situation the binormal has unit euclidean length, and that the curvature is integrable in time, we can define $\chi_0(x)$ as
\begin{equation}\label{chi0}
\chi_0(x)=\chi(1,x)-\int_0^1 c(\tau,x)\,b(\tau,x)\,d\tau.
\end{equation}

As a conclusion we have the following result. 
\begin{theorem}\label{theorem3}
Let $0<a$ and $\chi_1(x)$ a regular curve with curvature and torsion $c_1$ and $\tau_1$. We define
$$\psi_1(x)=c_1(x)e^{i\int_0^x\tau_1(x')\,dx'},\quad u_1(x)=e^{-i\frac{x^2}{4}}\psi_1(x)-a,$$
and assume that $u_1\in L^1\cap H^3$ small with respect to $a$. Then there exists a unique $\chi(t,x)$ regular solution of \eqref{binormal} for $0<t\leq 1$ with $\chi(1,x)=\chi_1(x)$. Moreover, its curvature and torsion $c$ and $\tau$ satisfy
\begin{equation}\label{ctauest}
\left|c (t,x)-\frac{a}{\sqrt{t}}\right|\leq \frac{C(u_1)}{t^{\frac 14^+}},\quad  \left|\tau(t,x)-\frac{x}{2t}\right|\leq \frac{C(u_1)}{t^{\frac 34^+}},
\end{equation}
and by defining $\chi_0(x)$ as in \eqref{chi0} then
$$|\chi(t,x)-\chi_0(x)|\leq C(u_1)\sqrt{t}.$$
\end{theorem}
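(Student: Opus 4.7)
My plan is to reduce the problem to the NLS \eqref{Hasimoto-eq} via the Hasimoto transform, invoke Theorem \ref{theorem2} to obtain a smooth, nowhere-vanishing filament function, and then reconstruct the curve by integrating the Frenet--Serret system. Set $\psi(1,x)=\psi_1(x):=c_1(x)e^{i\int_0^x\tau_1(x')dx'}$. By hypothesis $\psi_1-ae^{ix^2/4}=e^{ix^2/4}u_1$ with $u_1\in L^1\cap H^3\subset L^1\cap L^2$ small relative to $a$, so Theorem \ref{theorem2} (with $e^{ix^2/4}u_1$ playing the role of its ``$\psi_1$'') produces a unique solution $\psi$ of \eqref{Hasimoto-eq} on $(0,1]$ with $\psi(1,\cdot)=\psi_1$. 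Since we in fact assume $u_1\in H^3$, I would also apply Corollary \ref{nonlinearascHs} to the associated $u$ (defined by \eqref{calT}, \eqref{w}, \eqref{u}) to propagate Sobolev regularity, obtaining uniform bounds $\|u(s)\|_{H^3}\le C(u_1)$ for all $s\ge 1$ and, via Sobolev embedding, uniform smallness of $\|u(s)\|_{L^\infty}$.

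Rewriting \eqref{1.30} as
$$\psi(t,x)=\frac{e^{ix^2/4t}}{\sqrt t}\Bigl(a+e^{\pm ia^2\log t}\,\overline u(1/t,x/t)\Bigr),$$
the smallness of $u$ yields $|\psi(t,x)|\in[a/(2\sqrt t),3a/(2\sqrt t)]$ throughout $(0,1]$, so $c=|\psi|$ never vanishes and $\arg\psi$ is smooth. One then integrates the Frenet--Serret ODE in $x$ using $c$ and $\tau=\partial_x\arg\psi$ as data, prescribing the initial frame at $x=0$ to agree with the Frenet frame of $\chi_1$; this standard reconstruction (see the Appendix of \cite{BV} or \cite{NaShVeZe}) produces a smooth frame $(T,n,b)(t,x)$. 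Propagating the base point by $\chi(t,0)=\chi_1(0)+\int_1^tc(s,0)b(s,0)\,ds$ and setting $\chi(t,x)=\chi(t,0)+\int_0^xT(t,x')dx'$ gives a regular curve on $(0,1]$ satisfying $\chi_t=cb$, hence \eqref{binormal}, with $\chi(1,\cdot)=\chi_1$. Uniqueness reduces to uniqueness of $\psi$ in Theorem \ref{theorem2} and the standard uniqueness of the Frenet reconstruction from $(c,\tau)$ and initial data.

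With $V(t,x)=a+e^{\pm ia^2\log t}\overline u(1/t,x/t)$, the pointwise estimates \eqref{ctauest} follow from
$$c-\frac{a}{\sqrt t}=\frac{|V|-a}{\sqrt t},\qquad \tau-\frac{x}{2t}=\Im\frac{\partial_x V}{V},$$
together with $|V|\ge a/2$ and $|\partial_xV(t,x)|=t^{-1}|\partial_xu(1/t,x/t)|$: both bounds reduce to the pointwise decays $\|u(s)\|_{L^\infty}+\|\partial_xu(s)\|_{L^\infty}\le C_\epsilon\,s^{-1/4+\epsilon}$ as $s\to\infty$. For $\chi_0$, the bound $c\le 3a/(2\sqrt t)$ and $|b|=1$ make $cb$ integrable on $[0,1]$, so \eqref{chi0} defines $\chi_0$ and
$$|\chi(t,x)-\chi_0(x)|=\Bigl|\int_0^tc(s,x)b(s,x)\,ds\Bigr|\le\int_0^t\frac{3a}{2\sqrt s}\,ds=3a\sqrt t.$$

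The main obstacle is precisely the pointwise $L^\infty$ decay of $u$ and $\partial_xu$ at rate $s^{-1/4+\epsilon}$. Theorem \ref{theorem1} only yields $L^2$ scattering at rate $s^{-(1/4-\delta)}$, so promoting this to a pointwise bound requires interpolating the $L^2$ scattering decay with the uniform $H^3$ bound (Gagliardo--Nirenberg applied to $u-e^{i(s-1)\partial_x^2}f_+$) together with the dispersive $L^1\to L^\infty$ estimate for $e^{i(s-1)\partial_x^2}f_+$, whose applicability depends on the Fourier decay of $f_+$ near the origin supplied by Theorem \ref{theorem1}.
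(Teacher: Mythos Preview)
Your overall strategy---reduce to \eqref{Hasimoto-eq} via Hasimoto, invoke Theorem~\ref{theorem2} together with Corollary~\ref{nonlinearascHs} for regularity, reconstruct the curve from the Frenet system using the non-vanishing of $|\psi|$, and define $\chi_0$ by the time integral of $cb$---is exactly the paper's argument. The paper itself gives no more detail: the Remark following Theorem~\ref{theorem3} says that \eqref{ctauest} comes from the definitions of $c,\tau$ and the rate in Corollary~\ref{nonlinearascHs}, with the calculations deferred to \cite[\S3.2]{BV}.

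Where your proposal has a genuine gap is in the justification of the pointwise decay $\|u(s)\|_{L^\infty}+\|\partial_x u(s)\|_{L^\infty}\le C_\epsilon s^{-1/4+\epsilon}$. Your plan for the difference $u(s)-e^{i(s-1)\partial_x^2}f_+$ is fine (in fact Corollary~\ref{nonlinearascHs} already gives $H^1$ decay, hence $L^\infty$ decay by Sobolev embedding, with no interpolation needed). The problem is the free piece: you propose the $L^1\!\to\!L^\infty$ dispersive estimate for $e^{i(s-1)\partial_x^2}f_+$, relying on ``the Fourier decay of $f_+$ near the origin supplied by Theorem~\ref{theorem1}.'' But the estimate in Theorem~\ref{theorem1} (Proposition~\ref{estf+}) is $|\xi|^{2\delta}|\hat f_+(\xi)|\le C$ for $|\xi|\le 1$, which is a \emph{singularity} bound allowing $|\hat f_+(\xi)|\sim|\xi|^{-2\delta}$; it says nothing about $f_+\in L^1$, so the dispersive bound does not apply. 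The only immediate consequence is $\hat f_+\in L^1$ (combining the above with $f_+\in H^3$), which yields merely $\|e^{it\partial_x^2}f_+\|_{L^\infty}\le\|\hat f_+\|_{L^1}$---boundedness, not decay.

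Note that in \cite{BV} the situation is different: there the solution is built by the wave operator from a \emph{prescribed} $u_+\in L^1\cap L^2$, so $\|e^{it\partial_x^2}u_+\|_{L^\infty}\le Ct^{-1/2}$ is immediate. In the asymptotic-completeness direction treated here, the free piece needs a separate argument---for instance a frequency-splitting/oscillatory-integral estimate that combines the near-origin bound $|\hat f_+(\xi)|\le C|\xi|^{-2\delta}$ with the $H^3$ control at high frequency to extract the needed $s^{-1/4+\epsilon}$ decay, or an argument that bypasses $f_+$ altogether. As written, this step is not justified.
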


\begin{remark}
The bounds of the curvature and torsion given in \eqref{ctauest} follow from their definition
$$c(t,x)=|\psi(t,x)|, \quad \tau(t,x)=\Im\frac{\partial_x \psi(t,x)}{\psi(t,x),}$$
and from the rate of decay obtained in Corollary \ref{nonlinearascHs} below. The same calculations can be found in \S 3.2 of \cite{BV}, therefore they will be omitted here.
\end{remark}

\begin{remark}
As we said before, by Theorem 1.5 in \cite{BV}, if $a$ is small enough and if $\psi_+$ is small and regular enough with $|x|^{-2} \psi_+$ locally integrable, then $\chi_0(x)$ has a corner at the origin $x=0$.  
\end{remark}
\begin{remark}
The use of the Frenet frame can be avoided. In fact, once a solution of \eqref{Hasimoto-eq} is obtained, a slight modification of Theorem 3.1 of \cite{NaShVeZe} can be used to construct a solution for \eqref{binormal} for $0<t\leq 1$, with a trace $\chi_0$ in the focusing case defined as in \eqref{chi0}. This is because $|\psi|^2-\frac{a^2}{t}$ is in $L^2((\epsilon,1),L^\infty)$ for any positive $\epsilon$. In this case $|\psi|$ becomes unbounded in the Strichartz norm $L^4((0,1),L^\infty)$, and therefore the corresponding frame will become also singular as $t$ approaches to zero, as does the Frenet frame. 
\end{remark}

The paper is organized as follows. In Section \S\ref{s:linear} we study the asymptotic completeness of the linear equation 
\eqref{lin}. Then in Section \S\ref{s:nonlinear} we deduce Theorem \ref{theorem1} by perturbative methods. As already mentioned, Appendix A contains the proof of a new version of the existence of the wave operator
of \eqref{nonlin} that fits better with the hypothesis needed to obtain the asymptotic completeness of Theorem \ref{theorem1}. Finally in Appendix B we prove the growth of the zero Fourier mode for the solutions of the linear and the non-linear equations, \eqref{lin} and \eqref{nonlin}, property that we think it is interesting in itself.\\

{\bf{Acknowledgements:}} The authors are grateful to Kenji Nakanishi for useful remarks\\ concerning Lemma \ref{controls2}. We also want to thank the referee for the careful reading and comments that highly improved the presentation of the paper.

First author was partially supported by the French ANR projects:
ANR-05-JCJC-0036, ANR-05-JCJC-51279 and  R.A.S. ANR-08-JCJC-0124-01. The second author was partially supported by the grant MTM 2007-62186 of MEC (Spain) and FEDER.

\section{Scattering for the linear equation}\label{s:linear}
In this section we consider only the linear equation \eqref{lin}:
$$iu_t+u_{xx}\pm \frac{a^2}{t^{1\pm 2ia^2}}\overline{u}=0,$$
with initial data $u(t_0,x)$ at time $t_0\geq 1$. We start in \S\ref{ss-lin-controls} with the proof of some a-priori estimates on the Fourier modes of $u(t)$, that will allow us in \S \ref{ss-lin-global} to get a satisfactory global existence result. Then in \S \ref{ss-lin-asc} we prove the asymptotic completeness for \eqref{lin}, again with the help of the properties pointed out in \S\ref{ss-lin-controls}. Finally, in \S \ref{ss-lin-aposteriori} we obtain a regularity result for the asymptotic state and we prove a-posteriori  that $u\in L^4((t_0,\infty),L^\infty)$.

\subsection{A-priori controls}\label{ss-lin-controls}
\begin{lemma}\label{controls}
If $u$ solves equation \eqref{lin} then for $0<t_0\leq t$,
\begin{equation}\label{Fmod}
|\hat{u}(t,\xi)|\leq \,\frac{ t^{a^2}}{t_0^{a^2}} \,\left(|\hat{u}(t_0,\xi)|+|\hat{u}(t_0,-\xi)|\right).
\end{equation}
In particular,
$$\|u(t)\|_{\dot{H}^k}\leq\,\frac{ t^{a^2}}{t_0^{a^2}}\|u(t_0)\|_{\dot{H}^k}
$$
for all $k\in\mathbb{Z}$.
\end{lemma}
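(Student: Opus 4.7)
The plan is to work on the Fourier side, where the linear equation becomes, at each frequency $\xi$, a closed $2\times 2$ linear ODE system coupling $\hat u(t,\xi)$ with $\overline{\hat u(t,-\xi)}$. Taking the Fourier transform of \eqref{lin} in $x$ and using $\widehat{\overline u}(\xi)=\overline{\hat u(-\xi)}$ gives
\begin{equation*}
i\partial_t\hat u(t,\xi)-\xi^2\hat u(t,\xi)\pm\frac{a^2}{t^{1\pm 2ia^2}}\,\overline{\hat u(t,-\xi)}=0,
\end{equation*}
and replacing $\xi$ by $-\xi$ and conjugating yields the companion equation for $\overline{\hat u(t,-\xi)}$. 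This is a $2\times 2$ system with purely imaginary diagonal $\mp i\xi^2$ and off-diagonal entries of modulus $a^2/t$.

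Then I would compute the time derivative of the symmetrized quantity
$$E(t,\xi):=|\hat u(t,\xi)|^2+|\hat u(t,-\xi)|^2.$$
The $\xi^2$ diagonal terms contribute nothing (standard $L^2$ conservation for free Schr\"odinger), and the two off-diagonal contributions are complex conjugates of one another up to the unimodular factor $t^{\pm 2ia^2}$, so their sum produces a purely real expression of the form $(\text{const}/t)\cdot\Im(\cdots)$ whose modulus is at most $(4a^2/t)\,|\hat u(t,\xi)|\,|\hat u(t,-\xi)|$. Applying $2|\hat u(t,\xi)||\hat u(t,-\xi)|\leq E(t,\xi)$ yields
$$\left|\frac{d}{dt}E(t,\xi)\right|\leq\frac{2a^2}{t}\,E(t,\xi),$$
and integrating this differential inequality on $[t_0,t]$ gives the pointwise-in-$\xi$ estimate $E(t,\xi)\leq (t/t_0)^{2a^2}E(t_0,\xi)$.

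From this I would extract the two announced bounds. For \eqref{Fmod}, discard $|\hat u(t,-\xi)|^2$ on the left and use $\sqrt{A^2+B^2}\leq A+B$ on the right. For the $\dot H^k$ inequality, multiply the same estimate by $|\xi|^{2k}$ and integrate in $\xi$: since $|\xi|^{2k}$ is even, the change of variables $\xi\mapsto -\xi$ makes $\int|\xi|^{2k}E(t,\xi)\,d\xi$ and its $t_0$ counterpart equal to $2\|u(t)\|_{\dot H^k}^2$ and $2(t/t_0)^{2a^2}\|u(t_0)\|_{\dot H^k}^2$, and the factor two cancels. The only point that demands any care is recognizing that because of the $t^{\pm 2ia^2}$ factor one cannot close an ODE for $|\hat u(t,\xi)|$ alone; the symmetrization $E(t,\xi)$, dictated by the non-resonant coupling of frequencies $\pm\xi$ in the $\overline u$ term, is the correct object, after which the argument collapses to a one-line Gronwall computation.
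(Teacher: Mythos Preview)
Your proof is correct and follows essentially the same approach as the paper: both take the Fourier transform, observe the coupling between frequencies $\xi$ and $-\xi$ induced by the $\overline u$ term, symmetrize, and close by Gronwall. The only cosmetic difference is that the paper symmetrizes with the sum of moduli $|\hat u(t,\xi)|+|\hat u(t,-\xi)|$ (obtaining $\partial_t|\hat u(t,\xi)|\leq\frac{a^2}{t}|\hat u(t,-\xi)|$ first), whereas you use the sum of squares $E(t,\xi)$; your choice actually makes the $\dot H^k$ consequence slightly cleaner. One small inaccuracy in your commentary: the reason one cannot close on $|\hat u(t,\xi)|$ alone is the $\xi\leftrightarrow-\xi$ coupling coming from $\widehat{\overline u}(\xi)=\overline{\hat u(-\xi)}$, not the unimodular factor $t^{\pm 2ia^2}$.
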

\begin{proof}
Using the Fourier transform we write equation \eqref{lin} as
\begin{equation}\label{Ft}
0=i\hat{u}_t(t,\xi)-\xi^2\hat{u}(t,\xi)\pm \frac{a^2}{t^{1\pm 2ia^2}}\hat{\overline{u}}(t,\xi)=i\hat{u}_t(t,\xi)-\xi^2\hat{u}(t,\xi)\pm \frac{a^2}{t^{1\pm 2ia^2}}\overline{\hat{u}(t,-\xi)}.
\end{equation}
By multiplying by $\overline{\hat{u}(t,\xi)}$ and by taking the imaginary part,
$$\partial_t|\hat{u}(t,\xi)|^2=\mp 2\Im\, \frac{a^2}{t^{1\pm 2ia^2}}\,\overline{\hat{u}(t,-\xi)}\,\overline{\hat{u}(t,\xi)}.$$
We obtain
$$\partial_t|\hat{u}(t,\xi)|\leq\frac{a^2}{t}\,|\hat{u}(t,-\xi)|,$$
therefore 
$$\partial_t\left(|\hat{u}(t,\xi)|+|\hat{u}(t,-\xi)|\right)\leq \frac{a^2}{t} \left(|\hat{u}(t,\xi)|+|\hat{u}(t,-\xi)|\right),$$
so the lemma follows.
\end{proof}

Now we shall improve this control for some small frequencies.

\begin{lemma}\label{controls2}
Let $0<\delta$. If $u$ solves equation \eqref{lin} then for all $\xi\neq 0$ and for all $0<t_0\leq t$,
\begin{equation}\label{Fmod2}
|\hat u(t,\xi)|\leq \left(C(a)+\frac{C(a,\delta)}{(\xi^2\,t_0)^\delta}\right)\left(|\hat u(t_0,\xi)|+|\hat u(t_0,-\xi)|\right),
\end{equation}
which is a better estimate than the one of Lemma \ref{controls} in the region $\frac{1}{t^{a^2}}\ls \xi^{2\delta}$.
\end{lemma}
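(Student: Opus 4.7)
The plan is to perform one integration by parts in time on the Fourier side of \eqref{lin}, exploiting the oscillation $e^{2is\xi^2}$ that arises because the coupling through $\bar u$ is non-resonant with respect to the free Schr\"odinger phase. This delivers a Volterra-type inequality with a gain of $1/\xi^2$, which I combine with Lemma \ref{controls} on a short initial interval to produce the announced estimate.

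Concretely, setting $\tilde u(s,\xi) := e^{is\xi^2}\hat u(s,\xi)$ (so that $|\tilde u| = |\hat u|$) recasts \eqref{lin} on the Fourier side as
$$\tilde u_s(s,\xi) = \pm i a^2 \frac{e^{2is\xi^2}}{s^{1\pm 2ia^2}}\, \overline{\tilde u(s,-\xi)}.$$
Integrating from $t_0$ to $t$ and rewriting $e^{2is\xi^2} = (2i\xi^2)^{-1}\partial_s e^{2is\xi^2}$ to integrate by parts produces a boundary term dominated in modulus by $\xi^{-2}(|\hat u(t,-\xi)|/t + |\hat u(t_0,-\xi)|/t_0)$, plus an interior integral. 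Differentiating $\overline{\tilde u(s,-\xi)}/s^{1\pm 2ia^2}$ by means of the equation for $\tilde u(s,-\xi)$ produces a factor $e^{-2is\xi^2}$ that cancels the surviving $e^{2is\xi^2}$ exactly; in modulus the interior integrand is bounded by $a^2|\hat u(s,\xi)|/s^2 + O(1+a^2)|\hat u(s,-\xi)|/s^2$, the second term coming from $\partial_s s^{-1\mp 2ia^2}$. Writing $M(s) := |\hat u(s,\xi)| + |\hat u(s,-\xi)|$ and symmetrising in $\pm \xi$, I obtain
$$M(t) \leq M(t_0)\Bigl(1+\tfrac{a^2}{2\xi^2 t_0}\Bigr) + \tfrac{a^2\, M(t)}{2\xi^2 t} + \tfrac{C(a)}{\xi^2}\int_{t_0}^t \tfrac{M(s)}{s^2}\, ds.$$

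For $\xi^2 t \geq a^2$ the middle term can be absorbed into the left-hand side, and inserting Lemma \ref{controls} into the remaining integral and applying Gronwall gives $M(t) \leq C(a) M(t_0)$ once $\xi^2 t_0 \geq c(a)$, producing the $C(a)$ part of the statement. When $\xi^2 t_0 < c(a)$ I would split $[t_0, t]$ at $T^\star := C_1(a,\delta)/\xi^2 \geq t_0$, use Lemma \ref{controls} on $[t_0, T^\star]$ to pay a factor $(T^\star/t_0)^{a^2}$, and rerun the absorb-and-Gronwall step on $[T^\star, t]$ with a now-bounded Gronwall exponent; calibrating $C_1(a,\delta)$ converts the $a^2$-power loss into the claimed $(\xi^2 t_0)^{-\delta}$ factor, with the $\delta$-dependence of the cost absorbed into $C(a,\delta)$. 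In the residual regime $\xi^2 t < a^2$, which forces $\xi^2 t_0 < a^2$, Lemma \ref{controls} alone already yields $M(t) \leq (a^2/(\xi^2 t_0))^{a^2} M(t_0)$, which fits inside the target bound.

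The main obstacle is the balancing act in the choice of $T^\star$: it must be large enough for the IBP Gronwall exponential $\exp(C(a)/(\xi^2 T^\star))$ on $[T^\star, t]$ to remain bounded, yet small enough for the Lemma \ref{controls} loss $(T^\star/t_0)^{a^2}$ on $[t_0, T^\star]$ to be rewritten as $(\xi^2 t_0)^{-\delta}$ for the prescribed $\delta$; this tension is precisely what forces the constant $C(a,\delta)$ to depend jointly on $a$ and $\delta$ in the statement.
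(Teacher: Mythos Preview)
Your integration-by-parts computation is correct and, combined with Gronwall, does give $M(t)\le C(a)M(t_0)$ once $\xi^2 t_0\ge c(a)$; this is essentially the mechanism behind Lemma~\ref{IBP}. The gap is in the regime $\xi^2 t_0<c(a)$. Splitting at $T^\star$ and using Lemma~\ref{controls} on $[t_0,T^\star]$ costs exactly $(T^\star/t_0)^{a^2}$, and the exponent $a^2$ is fixed by Lemma~\ref{controls}---it does not depend on $C_1$. Your two constraints are $\xi^2 T^\star\ge c(a)$ (for the Gronwall exponential to be bounded) and $(T^\star/t_0)^{a^2}\le C\,(\xi^2 t_0)^{-\delta}$; eliminating $T^\star$ forces $(\xi^2 t_0)^{\delta/a^2-1}\le C$, which blows up as $\xi^2 t_0\to 0$ whenever $\delta<a^2$. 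The same obstruction appears in your ``residual regime'' $\xi^2 t<a^2$, where Lemma~\ref{controls} alone again gives only $(\xi^2 t_0)^{-a^2}$. So your argument proves the lemma only for $\delta\ge a^2$. Since the downstream applications (e.g.\ Proposition~\ref{gl}) require $\delta<\tfrac14-\gamma$ and no smallness of $a$ is assumed, the restriction $\delta\ge a^2$ is not harmless.

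The paper circumvents this by never invoking Lemma~\ref{controls} in the small-$\xi^2 t_0$ regime. It passes to $w=u\,e^{\pm ia^2\log t}$, rescales time to $\tilde t=t\xi^2$, and reduces \eqref{lin} to the real second-order system $Y'=Z$, $Z'=(-1+2a^2/\tilde t)Y$. For $\tilde t_0\ll 1$ one uses the anisotropic energy $\sigma=\epsilon^{-1}|Y|^2+\epsilon|Z|^2$ with the optimised choice $\epsilon=|\log\tilde t_0|^{-1/2}$; this yields a loss $e^{Ca^2\sqrt{|\log\tilde t_0|}}$, which is bounded by $C(a,\delta)\,\tilde t_0^{-\delta}$ for \emph{every} $\delta>0$, in contrast with the $\tilde t_0^{-a^2}$ coming from Lemma~\ref{controls}. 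For $\tilde t_0\gtrsim a^2$ the system is diagonalised and one gets uniform bounds, which plays the role of your Gronwall step. The essential difference between the two arguments is thus the treatment of the short-time interval: your black-box use of Lemma~\ref{controls} there is what caps $\delta$ at $a^2$.
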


\begin{proof}We shall work with $w(t)=u(t)e^{\pm ia^2\log t}$ the solution of \eqref{linearized}:
$$i\partial_tw+w_{xx}\pm\frac{a^2}{t}(w+\overline{w})=0.$$
We have, by taking the Fourier modes of the real and imaginary part of $w$,
\begin{equation}\label{remodes}
\partial_t\,\widehat{\Re w}(t,\xi)=\xi^2\,\widehat{\Im w}(t,\xi),
\end{equation}
\begin{equation}\label{immodes}
\partial_t\,\widehat{\Im w}(t,\xi)=-\xi^2\,\widehat{\Re w}(t,\xi)\pm \frac {2a^2}{t}\,\widehat{\Re w}(t,\xi).
\end{equation}

We denote 
$$Y_\xi( t)=\widehat{\Re w}\left(\frac{t}{\xi^2},\xi\right)\,\,,\,\,Z_\xi( t)=\widehat{\Im w}\left(\frac{t}{\xi^2},\xi\right).$$
Equations \eqref{remodes} and \eqref{immodes} become
\begin{equation}\label{system}
Y_\xi '( t)=Z_\xi( t)\,\,,\,\,Z_\xi '(t)=\frac{1}{\xi^2}\left(-\xi^2+\frac{2a^2\xi^2}{t}\right)\,Y_\xi(t)=\left(-1+\frac{2a^2}{ t}\right)\,Y_\xi(t).
\end{equation}
For simplicity, we consider only the focusing case, that is slightly more complicated. 
For $0<\epsilon\leq 1$ to be chosen later, the function
$$\sigma_\xi( t)= \frac 1\epsilon |Y_\xi( t)|^2+\epsilon |Z_\xi( t)|^2$$
satisfies
$$\sigma_\xi'=\left(\frac 1\epsilon +\epsilon\left(-1+\frac{2a^2}{ t}\right)\right)2\Re \overline{Y_\xi} Z_\xi\leq \left(\frac 1\epsilon -\epsilon+\epsilon\frac{2a^2}{t}\right)\sigma_\xi.$$
Therefore
$$\left(\log\sigma_\xi - t\left(\frac 1\epsilon -\epsilon\right)-2a^2\epsilon\log  t \right)'\leq 0,$$
and finally for all $0<\tilde t_0\leq t$,
$$\sigma_\xi( t)\leq e^{\Phi(t)}\sigma_\xi(\tilde t_0),$$
where
$$\Phi(t)=( t-\tilde t_0)\left(\frac 1\epsilon -\epsilon\right)+2a^2\epsilon(\log  t-\log\tilde t_0) .$$

\begin{itemize} 
\item Case 1: $0<\tilde t_0\leq t\leq \min\{a^2,\frac 1e\}$.\\
In this region
$$\sigma_\xi( t)\leq e^{\frac t\epsilon-2a^2\epsilon\log\tilde t_0}\sigma_\xi(\tilde t_0)\leq e^{\frac {a^2}{\epsilon}+2a^2\epsilon|\log\tilde t_0| }\sigma_\xi(\tilde t_0).$$ 
By choosing 
$$\epsilon =\frac{1}{\sqrt{|\log \tilde t_0|}},$$
we get
$$\sigma_\xi( t)\leq e^{3a^2\sqrt{|\log\tilde t_0| }}\sigma_\xi(\tilde t_0).$$ 
It follows that 
$$|Y_\xi(t)|^2\leq \left(|Y_\xi(\tilde t_0)|^2+\frac{|Z_\xi(\tilde t_0)|^2}{|\log\tilde t_0|} \right)e^{3a^2\sqrt{|\log\tilde t_0| }},$$
and
$$|Z_\xi(t)|^2\leq \left(|\log\tilde t_0||Y_\xi(t_0)|^2+|Z_\xi(\tilde t_0)|^2\right)e^{3a^2\sqrt{|\log\tilde t_0| }}.$$
Therefore, for all $\delta>0$, there exists a constant $C(a,\delta)$ such that for all $0<\tilde t_0\leq t\leq \min\{a^2,\frac 1e\}$,
$$|Y_\xi(t)|^2+|Z_\xi(t)|^2\leq\frac{C(a,\delta)}{\tilde t_0^{2\delta}}(|Y_\xi(\tilde t_0)|^2+|Z_\xi(\tilde t_0)|^2).$$

\item Case 2: $\min\{a^2,\frac 1e\}\leq \tilde t_0\leq t\leq 4a^2$ (if such a situation exists).\\
In this case, by taking $\epsilon =1$, $\Phi(t)$ is bounded by a constant depending on $a$, and we get
$$|Y_\xi(t)|^2+|Z_\xi(t)|^2\leq C(a)(|Y_\xi(\tilde t_0)|^2+|Z_\xi(\tilde t_0)|^2).$$

\item Case 3: $4a^2< \tilde t_0\leq t$.\\
For this region we shall diagonalize the system
$$\partial_t\left(\begin{array}{c}Y_\xi\\ Z_\xi\end{array}\right)
=\left(\begin{array}{cc}0 &1\\-\left(1-\frac{2a^2}{t}\right) & 0\end{array}\right)\left(\begin{array}{c}Y_\xi\\ Z_\xi\end{array}\right).$$
Let 
$$\alpha(t)=\sqrt{1-\frac{2a^2}{t}}\quad,\quad 
P(t)=\left(\begin{array}{cc} 1&1\\i\alpha(t)&-i\alpha(t)\end{array}\right).$$
In particular,
$$\frac{1}{\sqrt{2}}\leq \alpha(t)\leq 1\quad,\quad P^{-1}(t)=\left(\begin{array}{cc} \frac 12&-\frac{i}{2\alpha(t)}\\\frac 12&\frac{i}{2\alpha(t)}\end{array}\right).$$
Then
$$\left(\begin{array}{c}\tilde Y_\xi(t)\\ \tilde Z_\xi(t)\end{array}\right)=P^{-1}(t)\left(\begin{array}{c}Y_\xi(t)\\ Z_\xi(t)\end{array}\right)$$
satisfies
$$\partial_t\left(\begin{array}{c}\tilde Y_\xi\\ \tilde Z_\xi\end{array}\right)=\partial_t(P^{-1})P\left(\begin{array}{c}\tilde Y_\xi\\ \tilde Z_\xi\end{array}\right)+\left(\begin{array}{cc} i\alpha&0\\0&-i\alpha\end{array}\right)\left(\begin{array}{c}\tilde Y_\xi\\ \tilde Z_\xi\end{array}\right).$$
Denote
$$\Phi(t)=t-a^2\log t-\int_t^\infty \alpha(s)-1+\frac{a^2}{s}\,ds.$$
Finally,
$$\left(\begin{array}{c}\mathring Y_\xi(t)\\ \mathring Z_\xi(t)\end{array}\right)=
\left(\begin{array}{cc} e^{-i\Phi(t)} &0\\0 & e^{i\Phi(t)}\end{array}\right)\left(\begin{array}{c}\tilde Y_\xi(t)\\ \tilde Z_\xi(t)\end{array}\right)$$
satisfies
\begin{equation}\label{scandinavian}
\partial_t\left(\begin{array}{c}\mathring Y_\xi\\ \mathring Z_\xi\end{array}\right)=M(t)\,\left(\begin{array}{c}\mathring Y_\xi\\ \mathring Z_\xi\end{array}\right),
\end{equation}
where
$$M(t)=\left(\begin{array}{cc} e^{-i\Phi(t)} &0\\0 & e^{i\Phi(t)}\end{array}\right)\partial_t(P^{-1})P\left(\begin{array}{cc} e^{i\Phi(t)} &0\\0 & e^{-i\Phi(t)}\end{array}\right)=\frac{a^2}{2t^2\alpha^2}
\left(\begin{array}{cc} -1 & e^{-2i\Phi(t)}\\e^{2i\Phi(t)} & -1\end{array}\right)$$
Since $\frac{1}{\sqrt{2}}\leq \alpha(t)\leq 1$,  all the entries of $M(t)$ are upper-bounded by $\frac {Ca^2}{t^2}$. We infer that
$$\partial_t(|\mathring Y_\xi|^2+|\mathring Z_\xi|^2)\leq \frac {Ca^2}{t^2}(|\mathring Y_\xi|^2+|\mathring Z_\xi|^2),$$
so
$$\partial_t\left(\log(|\mathring Y_\xi|^2+|\mathring Z_\xi|^2)+\frac {Ca^2}{t}\right)\leq 0.$$
We have $\frac {Ca^2}{\tilde t_0}\leq \frac C4$, and we get
$$|\mathring Y_\xi(t)|^2+|\mathring Z_\xi(t)|^2\leq C(|\mathring Y_\xi(\tilde t_0)|^2+|\mathring Z_\xi(\tilde t_0)|^2).$$
Finally, from the relation
$$|\mathring Y_\xi(t)|^2+|\mathring Z_\xi(t)|^2=\left|\frac 12 Y_\xi-\frac{i}{2\alpha}Z_\xi\right|^2+\left|\frac 12 Y_\xi+\frac{i}{2\alpha}Z_\xi\right|^2= \frac{1}{2}|Y_\xi|^2+\frac{1}{2\alpha^2}|Z_\xi|^2,$$
and from $\frac{1}{\sqrt{2}}\leq \alpha(t)\leq 1$ it follows that
\begin{equation}\label{boundsmodes}
|Y_\xi(t)|^2+|Z_\xi(t)|^2\leq C(|Y_\xi(\tilde t_0)|^2+|Z_\xi(\tilde t_0)|^2).
\end{equation}

\end{itemize}

Summarizing, we have obtained that for all $\delta>0$, 
there exists a constant $C(a,\delta)$ such that for all $0<\tilde t_0\leq t$,
\begin{equation}\label{boundsmodesbis}
|Y_\xi(t)|^2+|Z_\xi(t)|^2\leq\left(C(a)+\frac{C(a,\delta)}{\tilde t_0^{2\delta}}\right)(|Y_\xi(\tilde t_0)|^2+|Z_\xi(\tilde t_0)|^2).
\end{equation}

By recovering the first unknowns, for all $0<t_0\leq t$,
$$ |\widehat{\Re w}(t,\xi)|^2+|\widehat{\Im w}(t,\xi)|^2\leq \left(C(a)+\frac{C(a,\delta)}{(\xi^2\,t_0)^{2\delta}}\right) \left(|\widehat{\Re w}(t_0,\xi)|^2+|\widehat{\Im w}(t_0,\xi)|^2\right),$$
and by using the identity $2(|z_1|^2+|z_2|^2)=|z_1+iz_2|^2+|z_1-iz_2|^2$,
$$|\widehat{w}(t,\xi)|^2+|\widehat{w}(t,-\xi)|^2\leq\left(C(a)+\frac{C(a,\delta)}{(\xi^2\,t_0)^{2\delta}}\right)\left(|\hat w(t_0,\xi)|^2+|\hat w(t_0,-\xi)|^2\right).$$
Since $w(t)=u(t)e^{\pm ia^2\log t}$ the Lemma follows.\\

For further use we want to compute the asymptotic behavior of the solution $u$ of \eqref{lin}. In view of \eqref{scandinavian} and \eqref{boundsmodes} of Case 3, we can define for $4a^2\leq \tilde{t_0}$
$$\left(\begin{array}{c}\mathring Y^+_\xi\\ \mathring Z^+_\xi\end{array}\right)=\left(\begin{array}{c}\mathring Y_\xi(\tilde{t_0})\\ \mathring Z_\xi(\tilde{t_0})\end{array}\right)+\int_{\tilde{t_0}}^\infty M(\tau)\,\left(\begin{array}{c}\mathring Y_\xi(\tau)\\ \mathring Z_\xi(\tau)\end{array}\right)\,d\tau,$$
so that for $4a^2\leq \tilde{t_0}\leq t$
\begin{equation}\label{asmodes}
\left(\begin{array}{c}\mathring Y^+_\xi\\ \mathring Z^+_\xi\end{array}\right)=\left(\begin{array}{c}\mathring Y_\xi(t)\\ \mathring Z_\xi(t)\end{array}\right)+\int_{t}^\infty M(\tau)\,\left(\begin{array}{c}\mathring Y_\xi(\tau)\\ \mathring Z_\xi(\tau)\end{array}\right)\,d\tau,
\end{equation}
and
\begin{equation}\label{asmodes3}
|\mathring Y_\xi(t)-\mathring Y^+_\xi|+|\mathring Z_\xi(t)-\mathring Z^+_\xi|\leq \frac{C(a)}{t}(|Y_\xi(\tilde t_0)|+|Z_\xi(\tilde t_0)|).
\end{equation}
We have
$$\mathring Y^+_\xi=\mathring Y_\xi (t)+\int_{t}^\infty \frac{a^2}{2\tau^2\alpha^2}\left(-\mathring Y_\xi(\tau)+e^{-2i\Phi(\tau)}\mathring Z_\xi(\tau)\right)\,d\tau$$
$$=e^{-i\Phi(t)}\tilde Y_\xi (t)+\int_{t}^\infty \frac{a^2\, e^{-i\Phi(\tau)}}{2\tau^2\alpha^2}\left(-\tilde Y_\xi(\tau)+\tilde Z_\xi(\tau)\right)\,d\tau$$
$$=e^{-i\Phi(t)}\left(\frac{1}{2}Y_\xi (t)-\frac{i}{2\alpha}Z_\xi (t)\right)+\int_{t}^\infty \frac{a^2\, e^{-i\Phi(\tau)}}{2\tau^2\alpha^2}\frac{i}{\alpha}Z_\xi(\tau)\,d\tau,$$
and
$$\mathring Z^+_\xi =\mathring Z_\xi (t)+\int_{t}^\infty \frac{a^2}{2\tau^2\alpha^2}\left(e^{2i\Phi(\tau)}\mathring Y_\xi(\tau)-\mathring Z_\xi(\tau)\right)\,d\tau$$
$$=e^{i\Phi(t)}\tilde Z_\xi (t)+\int_{t}^\infty \frac{a^2\,e^{i\Phi(\tau)}}{2\tau^2\alpha^2}\left(\tilde Y_\xi(\tau)-\tilde Z_\xi(\tau)\right)\,d\tau$$
$$=e^{i\Phi(t)}\left(\frac{1}{2}Y_\xi (t)+\frac{i}{2\alpha}Z_\xi (t)\right)-\int_{t}^\infty \frac{a^2\,e^{i\Phi(\tau)}}{2\tau^2\alpha^2}\frac{i}{\alpha}Z_\xi(\tau)\,d\tau,$$
therefore since $\overline{Y_\xi}=Y_{-\xi}$ and $\overline{Z_\xi}=Z_{-\xi}$ we get the relation
\begin{equation}\label{asmodes4}
\overline{\mathring Y^+_\xi}=e^{i\Phi(t)}\left(\frac{1}{2}Y_{-\xi} (t)+\frac{i}{2\alpha}Z_{-\xi} (t)\right)-\int_{t}^\infty \frac{a^2\, e^{i\Phi(\tau)}}{2\tau^2\alpha^2}\frac{i}{\alpha}Z_{-\xi}(\tau)\,d\tau=\mathring Z^+_{-\xi}. 
\end{equation}

As a conclusion, by \eqref{asmodes3} and \eqref{boundsmodesbis} we get for all $0<\tilde t_0$ and all $t\geq\max\{\tilde t_0, 4a^2\}$,
\begin{equation}\label{asmodes2}
\left|\left(\frac 12 Y_\xi-\frac{i}{2\alpha}Z_\xi\right)-e^{i\Phi(t)}\mathring Y^+_\xi\right|+\left|\left(\frac 12 Y_\xi+\frac{i}{2\alpha}Z_\xi\right)-e^{-i\Phi(t)}\mathring Z^+_\xi \right|\end{equation}
$$=\left|\left(\frac 12 Y_{-\xi}+\frac{i}{2\alpha}Z_{-\xi}\right)-e^{-i\Phi(t)}\mathring Z^+_{-\xi} \right|+\left|\left(\frac 12 Y_\xi+\frac{i}{2\alpha}Z_\xi\right)-e^{-i\Phi(t)}\mathring Z^+_\xi \right|$$
$$\leq \frac 1t\left(C(a)+\frac{C(a,\delta)}{\tilde{t_0}^\delta}\right)(|Y_\xi(\tilde t_0)|+|Z_\xi(\tilde t_0)|).
$$
In particular, in view of the definition of $\alpha(t)$ and of estimate \eqref{boundsmodes}, we have
$$\left|\left(\frac 12 Y_\xi+\frac{i}{2}Z_\xi\right)-e^{-i\Phi(t)}\mathring Z^+_\xi \right|\leq \frac 1t\left(C(a)+\frac{C(a,\delta)}{\tilde{t_0}^\delta}\right)(|Y_\xi(\tilde t_0)|+|Z_\xi(\tilde t_0)|).
$$

Hence noticing that $\Phi(t)=t-a^2\log t+\mathcal{O}\left(\frac 1t\right)$ we get that $u_+$ defined by
\begin{equation}\label{defu+}
2\mathring Z^+_\xi=e^{-ia^2\log\xi^2}\hat{u_+}(\xi),
\end{equation}
satisfies for all $0<t_0$ and for all $t\geq\max\{ t_0, \frac{4a^2}{\xi^2}\}$ the estimate
\begin{equation}\label{u+modes}
|\hat u(t,\xi)-e^{-it\xi^2}\,\hat{u_+}(\xi)|\leq \frac {1}{\xi^2\,t}\left(C(a)+\frac{C(a,\delta)}{(\xi^2\,t_0)^\delta}\right)\left(|\hat u(t_0,\xi)|+|\hat u(t_0,-\xi)|\right).
\end{equation}
By combining this estimate with \eqref{Fmod2} for time $t=\frac{4a^2}{\xi^2}$ and for time  $0<t_0\leq t\leq \frac{4a^2}{\xi^2}$, we see that \eqref{u+modes} is valid for all $0<t_0\leq t$.

\end{proof}

\begin{remark}
Let us notice that the logarithmic loss is generally unavoidable. Suppose $Y_\xi(\tilde t_0)= Z_\xi(\tilde t_0)=1$ and $0<\tilde t_0\leq t\leq \min\{a^2,\frac 1e\}$. Then in view of the system \eqref{system}, we have that $Y_\xi(t)>1$ and $Z_\xi(t)>1$, and so
$$ Y_\xi(t)>Y_\xi(\tilde t_0),\quad Z'_\xi(t)>\left(-1+\frac{2a^2}{t}\right)Y_\xi(\tilde t_0)=-1+\frac{2a^2}{t}.$$
Then we get finally the logarithmic lower bound
$$Z_\xi(t)\geq Z_\xi(\tilde t_0)-2a^2\log\frac{t}{\tilde t_0}-(t-\tilde t_0)\geq C(a) |\log\tilde t_0|.$$
\end{remark}

\begin{remark}
In \S\ref{ss:appendix-fourier-lin} we shall see that if $\hat u(t_0,0)$ is defined and if $\hat u(t_0,0)\neq 0$, then also for $\xi=0$ a logarithmic loss is unavoidable, independently of the size of $t_0\leq t$:
\begin{equation}\label{evolutionzeromodes}
\hat u(t,0)=e^{\pm ia^2\log \frac{t_0}{t}}\hat u(t_0,0)\pm2ia^2e^{\pm ia^2\log \frac{t_0}{t}}\,\Re\hat u(t_0,0) \,\log\frac{t}{t_0}.
\end{equation}
Moreover, under certain conditions on the initial data, a logarithmic loss will be shown in \S \ref{ss:appendix-fourier-nonlin} for the zero-modes of the solutions of the nonlinear equation \eqref{nonlin}.
\end{remark}

We end this subsection with an estimate on the typical Duhamel term associated to \eqref{lin}.
\begin{lemma}\label{IBP}Let $0<\delta$. 
Let $u$ be a solution of equation \eqref{lin} and let
$$A_{t_1,t_2}(\xi)=a^2\int_{t_1}^{t_2}e^{-i(t-\tau)\xi^2}\,\frac{\overline{\hat{u}(\tau,-\xi)}}{\tau^{1\pm 2ia^2}}d\tau$$
be the Fourier transform of the Duhamel term integrated between two arbitrary times $t_0<t_1\leq t_2$.
Then for $\xi\neq 0$
\begin{equation}\label{IBPest}
|A_{t_1,t_2}(\xi)|\leq \left(C(a)+\frac{C(a,\delta)}{(\xi^2\,t_0)^\delta}\right) \frac{|\hat{u}(t_0,\xi)|+|\hat{u}(t_0,-\xi)|}{\xi^2\,t_1}.
\end{equation}
\end{lemma}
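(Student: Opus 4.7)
\textbf{Plan of proof for Lemma \ref{IBP}.}

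The plan is to exploit the oscillatory phase $e^{i\tau\xi^2}$ in the Duhamel integral via integration by parts. A naive IBP directly in $\tau$ does not work, since differentiating $\overline{\hat u(\tau,-\xi)}$ reproduces, via equation \eqref{Ft}, a factor $i\xi^2 \overline{\hat u(\tau,-\xi)}$ that cancels the $\xi^2$ we gained. The clean way is to first extract the true free-evolution phase from $\hat u(\tau,-\xi)$ and observe that its conjugate oscillates as $e^{+i\tau\xi^2}$, so that the integrand in
$$A_{t_1,t_2}(\xi)=a^2 e^{-it\xi^2}\int_{t_1}^{t_2} e^{i\tau\xi^2}\,\frac{\overline{\hat u(\tau,-\xi)}}{\tau^{1\pm 2ia^2}}\,d\tau$$
carries a genuinely non-resonant phase of the form $e^{2i\tau\xi^2}$.

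Concretely, I would set $\tilde v(\tau,\xi)=e^{i\tau\xi^2}\hat u(\tau,\xi)$ and check, using \eqref{Ft}, that
$$\partial_\tau \tilde v(\tau,\xi)=\pm\frac{ia^2\,e^{i\tau\xi^2}}{\tau^{1\pm 2ia^2}}\,\overline{\hat u(\tau,-\xi)},$$
whence
$$\partial_\tau\overline{\tilde v(\tau,-\xi)}=\mp\frac{ia^2\,e^{-i\tau\xi^2}}{\tau^{1\mp 2ia^2}}\,\hat u(\tau,\xi).$$
Since $\overline{\hat u(\tau,-\xi)}=e^{i\tau\xi^2}\overline{\tilde v(\tau,-\xi)}$, the integral becomes
$$A_{t_1,t_2}(\xi)=a^2 e^{-it\xi^2}\int_{t_1}^{t_2} e^{2i\tau\xi^2}\,\frac{\overline{\tilde v(\tau,-\xi)}}{\tau^{1\pm 2ia^2}}\,d\tau.$$
Now $\xi\neq 0$ lets me integrate by parts using $\partial_\tau\bigl(e^{2i\tau\xi^2}/(2i\xi^2)\bigr)=e^{2i\tau\xi^2}$, which gains a factor $\xi^{-2}$.

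The boundary terms are of the form $\frac{a^2}{2\xi^2}\,\frac{|\hat u(t_i,-\xi)|}{t_i}$ for $i=1,2$, and since $t_2\geq t_1$ they are both $\lesssim \frac{a^2}{\xi^2 t_1}|\hat u(t_i,-\xi)|$. Applying the a-priori Fourier bound \eqref{Fmod2} of Lemma \ref{controls2} to propagate $|\hat u(t_i,-\xi)|$ back to time $t_0$ gives exactly the right-hand side of \eqref{IBPest}. For the interior term I differentiate $\overline{\tilde v(\tau,-\xi)}/\tau^{1\pm 2ia^2}$: one piece is $\mp ia^2 e^{-i\tau\xi^2}\hat u(\tau,\xi)/\tau^2$ (using the evolution equation above, since $\tau^{1\pm 2ia^2}\tau^{1\mp 2ia^2}=\tau^2$), and the other comes from differentiating $\tau^{-(1\pm 2ia^2)}$ and is of size $C(a)|\hat u(\tau,-\xi)|/\tau^2$. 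Both contribute at most
$$\frac{C(a)}{\xi^2}\int_{t_1}^{t_2}\frac{\max(|\hat u(\tau,\xi)|,|\hat u(\tau,-\xi)|)}{\tau^2}\,d\tau,$$
and a second application of \eqref{Fmod2} followed by $\int_{t_1}^{t_2}\tau^{-2}d\tau\leq 1/t_1$ finishes the bound with the same factor $(C(a)+C(a,\delta)(\xi^2 t_0)^{-\delta})/(\xi^2 t_1)$.

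The main conceptual step, and the only mildly subtle one, is the first rewriting: identifying that the true non-resonant phase of the Duhamel integrand is $e^{2i\tau\xi^2}$ rather than $e^{i\tau\xi^2}$, which is precisely the non-resonant structure of the $\overline{u}$ coupling mentioned before Lemma \ref{IBP}. Once this has been used, the rest is a straightforward IBP together with the pointwise Fourier estimate of Lemma \ref{controls2}; no delicate cancellation beyond this is required.
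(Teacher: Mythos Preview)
Your proof is correct and is essentially the same argument as the paper's: both exploit the non-resonant phase $e^{2i\tau\xi^2}$ via a single integration by parts and then invoke Lemma~\ref{controls2}. The only organizational difference is that you extract the free phase first (via $\tilde v=e^{i\tau\xi^2}\hat u$) so the doubled phase is visible before the IBP, whereas the paper integrates by parts directly against $e^{i\tau\xi^2}$, substitutes the equation for $\partial_\tau\overline{\hat u(\tau,-\xi)}$, and recovers the same factor $2$ by observing that this substitution reproduces $-A_{t_1,t_2}(\xi)$, which is then moved to the left-hand side.
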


\begin{proof}
We perform an integration by parts
$$A_{t_1,t_2}(\xi)=a^2e^{-it\xi^2}\int_{t_1}^{t_2}\frac{\partial_\tau e^{i\tau\xi^2}}{i\xi^2}\,\frac{\overline{\hat{u}(\tau,-\xi)}}{\tau^{1\pm 2ia^2}}d\tau$$
$$=\left.\frac{a^2e^{-i(t-\tau)\xi^2}}{i\xi^2\,\tau^{1\pm 2ia^2}}\overline{\hat{u}(\tau,-\xi)}\right|_{t_1}^{t_2}-a^2\int_{t_1}^{t_2}\frac{e^{-i(t-\tau)\xi^2}}{i\xi^2}\,\frac{\partial_\tau\overline{\hat{u}(\tau,-\xi)}}{\tau^{1\pm 2ia^2}}-\frac{(1\pm2ia^2)e^{-i(t-\tau)\xi^2}}{i\xi^2\,\tau^{2\pm 2ia^2}}\overline{\hat{u}(\tau,-\xi)}\,d\tau.$$
From \eqref{Ft} we get
$$i\hat{u}_t(t,-\xi)-\xi^2\hat{u}(t,-\xi)\pm \frac{a^2}{t^{1\pm 2ia^2}}\overline{\hat{u}(t,\xi)}=0,
$$
and then
$$-i\overline{\hat{u}_t(t,-\xi)}-\xi^2\overline{\hat{u}(t,-\xi)}\pm \frac{a^2}{t^{1\mp 2ia^2}}\hat{u}(t,\xi)=0.$$
Therefore by replacing
$$\partial_\tau\overline{\hat{u}(\tau,-\xi)}=i\xi^2\overline{\hat{u}(\tau,-\xi)}\mp i \frac{a^2}{\tau^{1\mp 2ia^2}}\hat{u}(\tau,\xi)$$
we recover an $A_{t_1,t_2}(\xi)$ with sign minus, so that
$$A_{t_1,t_2}(\xi)=\left.\frac{a^2e^{-i(t-\tau)\xi^2}}{2i\xi^2\,\tau^{1\pm 2ia^2}}\overline{\hat{u}(\tau,-\xi)}\right|_{t_1}^{t_2}$$
$$-a^2\int_{t_1}^{t_2}\frac{e^{-i(t-\tau)\xi^2}}{2i\xi^2}\,\frac{\mp ia^2\hat{u}(\tau,\xi)}{\tau^{2}}-\frac{(1\pm2ia^2)e^{-i(t-\tau)\xi^2}}{2i\xi^2\,\tau^{2\pm 2ia^2}}\overline{\hat{u}(\tau,-\xi)}\,d\tau.
$$
Then we can upper-bound
$$|A_{t_1,t_2}(\xi)|\leq \frac{a^2}{2\xi^2\,t_2}|\hat{u}(t_2,-\xi)|+\frac{a^2}{2\xi^2\,t_1}|\hat{u}(t_1,-\xi)|$$
$$+\frac{a^2}{2\xi^2}\int_{t_1}^{t_2}\left(a^2|\hat{u}(\tau,\xi)|+|1+2ia^2||\hat{u}(\tau,-\xi)|\right)\,\frac{d\tau}{\tau^2}.$$
Now Lemma \ref{controls2} allows us to conclude, 
$$|A_{t_1,t_2}(\xi)|\leq 
a^2(a^2+|1+2ia^2|)\left(C(a)+\frac{C(a,\delta)}{(\xi^2\,t_0)^\delta}\right)\frac{|\hat{u}(t_0,\xi)|+|\hat{u}(t_0,-\xi)|}{2\xi^2\,t_1}$$
and the Lemma follows.
\end{proof}

\subsection{Global solutions}\label{ss-lin-global}
For an initial data in $H^s$ we get by Lemma \ref{controls} that the solution is globally in $H^s$, but with a growth of $\|u(t)\|_{H^s}$. To avoid this issue, we shall start with an initial data in a more restricted space. We recall the spaces defined in the Introduction by \eqref{Xtau} and \eqref{Ytau}. Let $0\leq\gamma<\frac 14$ throughout the rest of the paper. For a fixed $t_0$, we define a norm on functions depending only on space variable
$$\|f\|_{X_{t_0}^\gamma}=\frac{1}{t_0^\frac 14}\|f\|_{L^2}+\frac{t_0^\gamma}{\sqrt{t_0}}\||\xi|^{2\gamma}\hat{f}(\xi)\|_{L^\infty(\xi^2\leq 1)},$$
and a norm on functions depending on both time and space
$$\|g\|_{Y_{t_0}^\gamma}=\sup_{t\geq t_0}\,\left(\frac{1}{t_0^\frac 14}\|g(t)\|_{L^2}+\left(\frac{t_0}{t}\right)^{a^2}\frac{t_0^\gamma}{\sqrt{t_0}}\||\xi|^{2\gamma}\hat{g}(t,\xi)\|_{L^\infty(\xi^2\leq 1)}\right),$$
and $X_{t_0}^\gamma$ and $Y_{t_0}^\gamma$ are the corresponding spaces.

\begin{prop}\label{gl}
Let $t_0\geq 1$. Let $u(t_0)$ be a function in $X_{t_0}^\gamma$. Then there exists a unique global solution $u\in Y_{t_0}^\gamma$ of equation \eqref{lin} with $u(t_0)$ initial data at time $t_0$, and
$$\|u\|_{Y_{t_0}^\gamma}\leq  C(a)\, \|u(t_0)\|_{X_{t_0}^\gamma}.$$
More precisely, 
\begin{equation}\label{est}
\sup_{t\geq t_0}\,\frac{1}{t_0^\frac 14}\|u(t)\|_{L^2}\leq  C(a)\, \|u(t_0)\|_{X_{t_0}^\gamma}\,,\end{equation}
$$\sup_{t\geq t_0}\,\left(\frac{t_0}{t}\right)^{a^2}\frac{t_0^\gamma}{\sqrt{t_0}}\||\xi|^{2\gamma}\hat{u}(t,\xi)\|_{L^\infty(\xi^2\leq 1)}\leq  C\,\frac{t_0^\gamma}{\sqrt{t_0}}\||\xi|^{2\gamma}\hat{u}(t_0,\xi)\|_{L^\infty(\xi^2\leq 1)}.$$

\end{prop}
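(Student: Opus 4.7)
The plan is to first establish global existence and uniqueness of the $L^2$ solution, then verify the two components of the $Y_{t_0}^\gamma$ norm separately. For existence I would recast \eqref{lin} in Duhamel form
$$u(t) = e^{i(t-t_0)\partial_x^2} u(t_0) \mp i a^2 \int_{t_0}^t \frac{e^{i(t-\tau)\partial_x^2}}{\tau^{1\pm 2ia^2}}\,\overline{u(\tau)}\,d\tau,$$
and run a Picard iteration in $L^\infty([t_0,T],L^2)$. Since $|\tau^{-1\mp 2ia^2}| = \tau^{-1}$ and the free propagator together with conjugation is an $L^2$ isometry, the Duhamel map is a contraction whenever $a^2\log(T/t_0) < 1$, giving local existence. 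The a priori bound $\|u(t)\|_{L^2} \leq 2(t/t_0)^{a^2}\|u(t_0)\|_{L^2}$, obtained by squaring the pointwise estimate of Lemma \ref{controls} and integrating, prevents blow-up in finite time, so the solution extends to all of $[t_0,\infty)$; uniqueness comes from applying the same bound to the difference of two solutions.

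The $L^\infty(\xi^2\leq 1)$ piece of the $Y_{t_0}^\gamma$ norm follows directly from Lemma \ref{controls}. Writing $M = \||\xi|^{2\gamma}\hat u(t_0,\xi)\|_{L^\infty(\xi^2\leq 1)}$, the $X_{t_0}^\gamma$ hypothesis gives $|\hat u(t_0,\pm\xi)| \leq M|\xi|^{-2\gamma}$ on $\xi^2\leq 1$, so Lemma \ref{controls} yields $|\xi|^{2\gamma}|\hat u(t,\xi)| \leq 2(t/t_0)^{a^2}M$, which, after multiplying through by $(t_0/t)^{a^2}\,t_0^{\gamma-1/2}$, is exactly the second inequality of the proposition.

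The core step is the $L^2$ estimate. I would split Fourier space into $\xi^2\geq 1/t_0$ and $\xi^2\leq 1/t_0$ and apply Lemma \ref{controls2} in each region with a fixed $\delta \in (0,1/4-\gamma)$. On $\xi^2\geq 1/t_0$ the factor $(\xi^2 t_0)^{-\delta}$ is bounded by $1$, collapsing Lemma \ref{controls2} to a $t$-independent pointwise bound $|\hat u(t,\xi)|^2 \leq C(a,\delta)^2(|\hat u(t_0,\xi)|^2 + |\hat u(t_0,-\xi)|^2)$ which integrates to $C(a,\delta)\|u(t_0)\|_{L^2}^2$. On $\xi^2\leq 1/t_0$ I would use $|\hat u(t_0,\pm\xi)|\leq M|\xi|^{-2\gamma}$ together with Lemma \ref{controls2} and compute, using $\int_{|\xi|\leq t_0^{-1/2}}|\xi|^{-a}d\xi = 2\,t_0^{(a-1)/2}/(1-a)$, that the two integrals $\int_{|\xi|\leq t_0^{-1/2}}|\xi|^{-4\gamma}d\xi$ and $t_0^{-2\delta}\int_{|\xi|\leq t_0^{-1/2}}|\xi|^{-4\gamma-4\delta}d\xi$ both evaluate to $C(a,\gamma,\delta)\,M^2\,t_0^{2\gamma-1/2}$. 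Summing the two regions yields
$$\|u(t)\|_{L^2}^2\leq C(a,\gamma)\bigl(\|u(t_0)\|_{L^2}^2 + t_0^{2\gamma-1/2}M^2\bigr)\leq C(a,\gamma)\,t_0^{1/2}\,\|u(t_0)\|_{X_{t_0}^\gamma}^2,$$
which is the first inequality of the proposition after taking square roots.

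The delicate point, and what I expect to be the only real obstacle, is that the power $t_0^{2\gamma-1/2}$ comes out identical in both low-frequency integrals independently of $\delta$: the factor $t_0^{-2\delta}$ from Lemma \ref{controls2}, the measure $\sim t_0^{-1/2}$ of $\{\xi^2\leq 1/t_0\}$, and the singularity $|\xi|^{-4\gamma}$ from the $X_{t_0}^\gamma$ hypothesis must line up exactly. This is precisely where the assumption $\gamma < 1/4$ is used, since it allows a choice of $\delta\in(0,1/4-\gamma)$ keeping the integrals convergent while producing the correct power of $t_0$.
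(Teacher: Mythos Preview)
Your argument is correct. The main difference from the paper is structural: the paper first proves the case $t_0=1$ by splitting the $L^2$ norm at $\xi^2=1$ and applying Lemma~\ref{controls2} on each side, and then reduces the general $t_0\geq 1$ to this case via the scaling $u(t_0,x)=U(1,x/\sqrt{t_0})$, checking that $\|U(1)\|_{X_1^\gamma}\leq\|u(t_0)\|_{X_{t_0}^\gamma}$ and that the $Y_{t_0}^\gamma$ norm of $u$ equals the $Y_1^\gamma$ norm of $U$. You instead handle arbitrary $t_0$ in one shot by splitting at $\xi^2=1/t_0$ and computing the low-frequency contribution directly; the cancellation you highlight, whereby the $t_0^{-2\delta}$ from Lemma~\ref{controls2} exactly compensates the extra $t_0^{2\delta}$ coming from the stronger singularity $|\xi|^{-4\gamma-4\delta}$, is precisely the scaling invariance that the paper exploits abstractly. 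Your route is more hands-on and avoids introducing the auxiliary rescaled solution $U$; the paper's route makes transparent why the weights $t_0^{-1/4}$ and $t_0^{\gamma-1/2}$ in the definition of $X_{t_0}^\gamma$ are the natural ones, and yields the slightly sharper second inequality with constant $C$ independent of $a$ (since only Lemma~\ref{controls} is used there). Your existence discussion is also more detailed than the paper's, which simply takes the global $L^2$ solution for granted.
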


\begin{proof}
We first show the Proposition with $t_0=1$ and then for an arbitrary $t_0$.\\

We start with $u(1)\in X_1^\gamma$, which means that $u(1)\in L^2$ with $|\xi|^{2\gamma}\hat{u}(1,\xi)$ bounded in the region $\xi^2\leq 1$. We know already that a global solution $u(t)\in \mathcal{C}((1,\infty),L^2)$ exists, and we want to show that it belongs to $Y_1^\gamma$. By Lemma \ref{controls}, for all $M>0$,
\begin{equation}\label{linftyest}
\frac{1}{t^{a^2}}\||\xi|^{2\gamma}\hat{u}(t,\xi)\|_{L^\infty(\xi^2\leq M)}\leq 2\||\xi|^{2\gamma}\hat{u}(1,\xi)\|_{L^\infty(\xi^2\leq M)},
\end{equation}
so the second condition to be in $Y_1^\gamma$ is fulfilled by taking $M=1$. To control the $L^2$ norm we split it into two parts
$$\|u(t)\|_{L^2}=\|\hat{u}(t)\|_{L^2}=\|\hat{u}(t)\|_{L^2(\xi^2\leq 1)}+\|\hat{u}(t)\|_{L^2(1\leq \xi^2)}=I+J.$$
For both parts we use Lemma \ref{controls2}, with $\delta<\frac 14-\gamma$,
$$I\leq C(a)\||\xi|^{-2\delta}|\hat{u}(1,\xi)|\|_{L^2(\xi^2\leq 1)}\leq  C(a)\||\xi|^{-2(\gamma+\delta)}\|_{L^2(\xi^2\leq 1)}\,\||\xi|^{2\gamma}\hat{u}(1,\xi)\|_{L^\infty(\xi^2\leq 1)},$$
and
$$J\leq  C(a)\|\hat{u}(1,\xi)\|_{L^2(1\leq \xi^2)}\leq  C(a)\|\hat{u}(1)\|_{L^2}.$$
Therefore we have the $L^2$ norm of $u(t)$ bounded in time, 
$$\|u(t)\|_{L^2}\leq  C(a)\,\|u(1)\|_{L^2}+ C(a)\,\||\xi|^{2\gamma}\hat{u}(1,\xi)\|_{L^\infty(\xi^2\leq 1)}\leq  C(a)\|u(1)\|_{X_1^\gamma},$$
and so $u$ is in $Y_1^\gamma$.\\

Now we start with $u(t_0)\in X_{t_0}^\gamma$. We define $U(1)$ by
$$u(t_0,x)=U\left(1,\frac{x}{\sqrt{t_0}}\right).$$
We have 
$$\|u(t_0)\|_{L^2}=t_0^\frac 14\|U(1)\|_{L^2}$$
and
$$\frac{t_0^\gamma}{\sqrt{t_0}}\||\xi|^{2\gamma}\hat{u}(t_0,\xi)\|_{L^\infty(\xi^2\leq 1)}=
\frac{t_0^\gamma}{\sqrt{t_0}}\left\||\xi|^{2\gamma}\int e^{ix\xi}\,U\left(1,\frac{x}{\sqrt{t_0}}\right)dx\right\|_{L^\infty(\xi^2\leq 1)}$$
$$=t_0^\gamma\||\xi|^{2\gamma}\hat{U}(1,\xi\sqrt{t_0})\|_{L^\infty(\xi^2\leq 1)}=\||\xi|^{2\gamma}\hat{U}(1,\xi)\|_{L^\infty(\xi^2\leq t_0)}\geq \||\xi|^{2\gamma}\hat{U}(1,\xi)\|_{L^\infty(\xi^2\leq 1)}.$$
Hence 
$$\|U(1)\|_{X_1^\gamma}\leq\|u(t_0)\|_{X_{t_0}^\gamma},$$
and $U(1)$ is in $X_1^\gamma$. Therefore we can consider the global solution $U\in Y_1^\gamma$ of equation \eqref{lin} with initial data $U(1)$ at time 1. The function $u$ defined by 
$$u(t,x)=U\left(\frac{t}{t_0},\frac{x}{\sqrt{t_0}}\right)$$
is the solution of equation \eqref{lin} with initial data $u(t_0)$ at time $t_0$. We shall re-write the $L^2$ estimate and \eqref{linftyest} with $M=t_0$,
$$\sup_{t\geq 1}\,\|U(t)\|_{L^2}\leq C(a)\, \|U(1)\|_{X_1^\gamma},\quad \sup_{t\geq 1}\,\frac{1}{t^{a^2}}\||\xi|^{2\gamma}\hat{U}(t,\xi)\|_{L^\infty(\xi^2\leq t_0)}\leq 2\,\||\xi|^{2\gamma}\hat{U}(1,\xi)\|_{L^\infty(\xi^2\leq t_0)},$$
in terms of $u$. We have 
$$\sup_{t\geq 1}\,\|U(t)\|_{L^2}=\sup_{t\geq 1}\,\|u(t\,t_0,x\sqrt{t_0})\|_{L^2}=\sup_{t\geq 1}\,\frac{1}{t_0^\frac 14}\|u(t\,t_0)\|_{L^2}=\sup_{t\geq t_0}\,\frac{1}{t_0^\frac 14}\|u(t)\|_{L^2},$$
and since we have already shown that $\|U(1)\|_{X_1^\gamma}\leq\|u(t_0)\|_{X_{t_0}^\gamma}$, we get the first estimate of \eqref{est}. We have also already computed
$$\||\xi|^{2\gamma}\hat{U}(1,\xi)\|_{L^\infty(\xi^2\leq t_0)}=\frac{t_0^\gamma}{\sqrt{t_0}}\||\xi|^{2\gamma}\hat{u}(t_0,\xi)\|_{L^\infty(\xi^2\leq 1)},$$
and we get similarly
$$\sup_{t\geq 1}\,\frac{1}{t^{a^2}}\||\xi|^{2\gamma}\hat{U}(t,\xi)\|_{L^\infty(\xi^2\leq t_0)}=\sup_{t\geq 1}\,\frac{1}{t^{a^2}}\left\||\xi|^{2\gamma}\int e^{ix\xi}u(t\,t_0,x\sqrt{t_0})dx\right\|_{L^\infty(\xi^2\leq t_0)}$$
$$=\sup_{t\geq 1}\,\frac{1}{t^{a^2}}\frac{1}{\sqrt{t_0}}\left\||\xi|^{2\gamma}\hat{u}\left(t\,t_0,\frac{\xi}{\sqrt{t_0}}\right)\right\|_{L^\infty(\xi^2\leq t_0)}=\sup_{t\geq t_0}\,\left(\frac{t_0}{t}\right)^{a^2}\frac{1}{\sqrt{t_0}}\left\||\xi|^{2\gamma}\hat{u}\left(t,\frac{\xi}{\sqrt{t_0}}\right)\right\|_{L^\infty(\xi^2\leq t_0)}$$
$$=\sup_{t\geq t_0}\,\left(\frac{t_0}{t}\right)^{a^2}\frac{t_0^\gamma}{\sqrt{t_0}}\left\||\xi|^{2\gamma}\hat{u}(t,\xi)\right\|_{L^\infty(\xi^2\leq 1)},$$
so we get also the second estimate of \eqref{est} and the proof is complete.
\end{proof}

Since equation \eqref{lin} is linear, we can apply Proposition \ref{gl} for the higher order derivatives, and get the following statement.

\begin{corollary}\label{glHs} 
Let $s\in\mathbb N$ and $t_0\geq 1$. Let $u(t_0)$ be a function in $X_{t_0}^\gamma$ such that $\partial_x^k u(t_0)\in X_{t_0}^\gamma$ for all $0\leq k\leq s$. Then there exists a unique global solution $u\in Y_{t_0}^\gamma$ of equation \eqref{lin} with $u(t_0)$ initial data at time $t_0$, with $\partial_x^k u\in Y_{t_0}^\gamma$ for all $0\leq k\leq s$, and
$$\|\partial_x^k u\|_{Y_{t_0}^\gamma}\leq  C(a)\, \|\partial_x^k u(t_0)\|_{X_{t_0}^\gamma}.$$
More precisely, 
$$\sup_{t\geq t_0}\,\frac{1}{t_0^\frac 14}\|\partial_x^k u(t)\|_{L^2}\leq  C(a)\, \|\partial_x^k u(t_0)\|_{X_{t_0}^\gamma}\,,$$
$$\,\sup_{t\geq t_0}\,\left(\frac{t_0}{t}\right)^{a^2}\frac{t_0^\gamma}{\sqrt{t_0}}\||\xi|^{2\gamma}\widehat{\partial_x^k u}(t,\xi)\|_{L^\infty(\xi^2\leq 1)}\leq  C(a)\,\frac{t_0^\gamma}{\sqrt{t_0}}\||\xi|^{2\gamma}\widehat{\partial_x^k u}(t_0,\xi)\|_{L^\infty(\xi^2\leq 1)}.$$
\end{corollary}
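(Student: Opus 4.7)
The plan is to reduce the corollary directly to Proposition~\ref{gl} by exploiting the invariance of equation \eqref{lin} under spatial translations. The coefficient $\pm a^2/t^{1\pm 2ia^2}$ in front of $\bar u$ depends only on time, so the operator $\partial_x$ commutes with the full equation.

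More precisely, since $\partial_x$ preserves real and imaginary parts, it commutes with complex conjugation: $\partial_x^k \bar u = \overline{\partial_x^k u}$ for every integer $k$. Differentiating \eqref{lin} $k$ times in $x$ therefore gives, for each $0\le k\le s$,
\begin{equation*}
i(\partial_x^k u)_t + (\partial_x^k u)_{xx} \pm \frac{a^2}{t^{1\pm 2ia^2}}\,\overline{\partial_x^k u} = 0,
\end{equation*}
so $v:=\partial_x^k u$ is itself a solution of \eqref{lin}, with initial data $v(t_0)=\partial_x^k u(t_0)$. By hypothesis this datum lies in $X_{t_0}^\gamma$.

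Applying Proposition~\ref{gl} to each such $v$ produces a unique global $v\in Y_{t_0}^\gamma$ satisfying the bound $\|v\|_{Y_{t_0}^\gamma}\le C(a)\,\|v(t_0)\|_{X_{t_0}^\gamma}$, together with the two refined estimates of \eqref{est}. Rewriting these in terms of $\partial_x^k u$ gives precisely the conclusion of the corollary, and global uniqueness is inherited from the uniqueness part of Proposition~\ref{gl}. There is no substantive obstacle here: the whole argument is a direct consequence of the linearity of \eqref{lin} and of its invariance under spatial translation; no new oscillatory integral or Fourier analysis is required beyond that already carried out in \S\ref{ss-lin-controls} and Proposition~\ref{gl}.
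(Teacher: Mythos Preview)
Your proof is correct and is exactly the paper's approach: the paper simply remarks that since equation \eqref{lin} is linear (and its coefficients are independent of $x$), Proposition~\ref{gl} applies to each $\partial_x^k u$, which is precisely what you have written out in detail.
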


\subsection{Asymptotic completeness}\label{ss-lin-asc}

\begin{prop}\label{linearasc}
Let $t_0\geq 1$ and let $u(t_0)$ be a function in $X_{t_0}^\gamma$. Then the unique global solution $u\in Y_{t_0}^\gamma$ of equation \eqref{lin} with $u(t_0)$ initial data at time $t_0$ scatters in $L^2$. 
More precisely, there exists $u_+\in L^2$ such that 
\begin{equation}\label{rate}
\|u(t)-e^{i(t-t_0)\partial_x^2}u_+\|_{L^2}\leq C(a,\delta)\,\frac{(1+\log t_0)t_0^{\frac 12-(\gamma+\delta)}}{t^{\frac 14-(\gamma+\delta)}}\|u(t_0)\|_{X_{t_0}^\gamma}\underset{t\tend\infty}{\longrightarrow} 0,
\end{equation}
for any $0<\delta<1/4-\gamma$.
\end{prop}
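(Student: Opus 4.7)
I would prove scattering by setting $g(t,\xi)=e^{i(t-t_0)\xi^2}\hat u(t,\xi)$ and showing that $g(t)$ converges in $L^2_\xi$ as $t\to\infty$; the candidate limit $\hat u_+(\xi)$ then satisfies $\|u(t)-e^{i(t-t_0)\partial_x^2}u_+\|_{L^2}=\|g(t)-\hat u_+\|_{L^2}$ by Plancherel. A direct computation using the Fourier-side equation \eqref{Ft} gives
\[
\partial_t g(t,\xi)=\pm i\,a^2\,\frac{e^{i(t-t_0)\xi^2}}{t^{1\pm 2ia^2}}\,\overline{\hat u(t,-\xi)},
\]
so that for any $t_0\le t_1<t_2$ the difference $g(t_2,\xi)-g(t_1,\xi)$ is, up to a unimodular phase and the constant $a^2$, precisely the quantity $A_{t_1,t_2}(\xi)$ estimated in Lemma \ref{IBP}. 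That lemma immediately yields
\[
|g(t_2,\xi)-g(t_1,\xi)|\le\Bigl(C(a)+\tfrac{C(a,\delta)}{(\xi^2 t_0)^\delta}\Bigr)\frac{|\hat u(t_0,\xi)|+|\hat u(t_0,-\xi)|}{\xi^2\,t_1},
\]
which makes $g(t,\xi)$ Cauchy pointwise for $\xi\ne 0$ and defines $\hat u_+(\xi)$.

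To upgrade pointwise convergence to the quantitative $L^2$-rate I would split the $\xi$-integration at the natural scale $|\xi|=t^{-1/2}$, at which the Duhamel decay factor $1/(\xi^2 t)$ becomes $O(1)$. On the low-frequency piece $|\xi|\le t^{-1/2}$, I would bound each of $|g(t,\xi)|$ and $|\hat u_+(\xi)|$ via Lemma \ref{controls2} and then insert the $X_{t_0}^\gamma$-estimate $|\hat u(t_0,\xi)|\le C\,t_0^{1/2-\gamma}|\xi|^{-2\gamma}\|u(t_0)\|_{X_{t_0}^\gamma}$ valid on $\xi^2\le 1$; since $\gamma+\delta<1/4$ the resulting integrand $|\xi|^{-4(\gamma+\delta)}$ is locally integrable, and the piece evaluates to $C(a,\delta)\,t_0^{1-2(\gamma+\delta)}\,t^{-(1/2-2(\gamma+\delta))}\|u(t_0)\|^2_{X_{t_0}^\gamma}$, which is already the square of the target rate. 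On the high-frequency piece $|\xi|>t^{-1/2}$ I would use the Duhamel bound with $t_1=t$ and $t_2\to\infty$, further decomposing into the three sub-ranges $t^{-1/2}<|\xi|\le t_0^{-1/2}$, $t_0^{-1/2}<|\xi|\le 1$, and $|\xi|>1$: on the first two I again use the low-frequency control of $\hat u(t_0,\xi)$, retaining the factor $(\xi^2 t_0)^{-\delta}$ only on the first, while on the third I simply use $\|\hat u(t_0)\|_{L^2}\le t_0^{1/4}\|u(t_0)\|_{X_{t_0}^\gamma}$ from the definition of $X_{t_0}^\gamma$. Each contribution reduces to a one-variable power integral and, using $t\ge t_0$ to absorb extra positive powers of $t_0/t$, is bounded by the same rate.

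The principal bookkeeping difficulty is organizing these cases so that the $t_0$ exponents combine to exactly $t_0^{1/2-(\gamma+\delta)}$ rather than something larger; I expect the extra factor $1+\log t_0$ in the stated estimate to arise from integrating the critical-exponent tail near the transition $|\xi|\sim t_0^{-1/2}$, where after the appropriate powers have been absorbed the integrand becomes logarithmic. Once the quantitative rate is established, $\hat u_+\in L^2$ follows from the Cauchy property of $g(t)$ in $L^2$ (equivalently, letting $t_2\to\infty$ in the rate) together with the uniform bound $\|g(t)\|_{L^2}=\|u(t)\|_{L^2}\le C(a)\,t_0^{1/4}\|u(t_0)\|_{X_{t_0}^\gamma}$ provided by Proposition \ref{gl}, which completes the proof.
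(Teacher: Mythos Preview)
Your approach is correct and follows the paper's strategy closely: both reduce to estimating $\|A_{t_1,t_2}\|_{L^2}$ (which is $\|g(t_2)-g(t_1)\|_{L^2}$ in your notation), split frequencies at the scales $\xi^2\sim 1/t$ and $\xi^2\sim 1/t_0$, and apply Lemma~\ref{IBP} on the high-frequency pieces and Lemma~\ref{controls2} on the low-frequency pieces. The only notable variation is your treatment of the lowest range $\xi^2\le 1/t$: the paper bounds $A_{t_1,t_2}(\xi)$ there by the crude Duhamel estimate $a^2\int_{t_1}^{t_2}|\hat u(\tau,-\xi)|\,\tau^{-1}d\tau$ together with Lemma~\ref{controls2} (which produces a $|\log\xi^2|$ factor), whereas you use the triangle inequality $|g(t,\xi)-\hat u_+(\xi)|\le|g(t,\xi)|+|\hat u_+(\xi)|$ and the uniform-in-$t$ bound of Lemma~\ref{controls2}; this is legitimate and, as your own computation shows, actually avoids any logarithm on that piece. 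Your guess about the origin of the $(1+\log t_0)$ is slightly off: in the paper it comes from the intermediate region $1/t_2\le\xi^2\le 1/t_1$, where $A_{t_1,t_2}$ is split at $\tau=1/\xi^2$ and the piece $A_{t_1,1/\xi^2}$ contributes a $|\log\xi^2|$; after the $L^2$-integration this becomes $(1+\log t)/t^{1/4-(\gamma+\delta)}$, and the $\log$ is then absorbed by slightly enlarging $\delta$, so the factor $(1+\log t_0)$ in the stated rate is not sharp anyway.
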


\begin{proof}
First we shall show that $e^{-i(t-t_0)\partial_x^2}\,u(t,x)$ has a limit in $L^2$ as $t$ goes to infinity. This is equivalent to 
$$\left\|e^{-it_2\partial_x^2}\,u(t_2,x)-e^{-it_1\partial_x^2}\,u(t_1,x)\right\|_{L^2}\underset{t_1,t_2\tend\infty}{\longrightarrow} 0,$$
and to
$$\left\|e^{it_2\xi^2}\,\hat{u}(t_2,\xi)-e^{it_2\xi^2}\,\hat{u}(t_1,\xi)\right\|_{L^2}=\left\|A_{t_1,t_2}(\xi)\right\|_{L^2}\underset{t_1,t_2\tend\infty}{\longrightarrow} 0.$$

For $1/t_0\leq \xi^2$, Lemma \ref{IBP} gives
$$\left\|A_{t_1,t_2}(\xi)\right\|_{L^2(1/t_0\leq \xi^2)}\leq C(a)\frac{t_0}{t_1}\|u(t_0)\|_{L^2}.$$

In the region $\xi^2\leq 1/t_2\leq 1/t_0$ we use Lemma \ref{controls2}
$$|A_{t_1,t_2}(\xi)|\leq a^2\int_{t_1}^{t_2}\,\frac{|\hat{u}(\tau,-\xi)|}{\tau}d\tau\leq C(a,\delta)\frac{|\hat{u}(t_0,-\xi)|+|\hat{u}(t_0,\xi)|}{(\xi^2\,t_0)^\delta}\,\log t_2,$$
so for $0<\delta<1/4-\gamma$,
$$\|A_{t_1,t_2}\|_{L^2(\xi^2\leq 1/t_2)}\leq C(a,\delta)\frac{\||\xi|^{2\gamma}\hat{u}(t_0,\xi)\|_{L^\infty(\xi^2\leq 1)}}{t_0^\delta} \left\|\frac{\log\xi^2}{\xi^{2(\gamma+\delta)}}\right\|_{L^2(\xi^2\leq 1/t_2)}$$
$$\leq C(a,\delta)\frac{1+\log t_2}{t_0^\delta\,t_2^{\frac 14-(\gamma+\delta)}}\||\xi|^{2\gamma}\hat{u}(t_0,\xi)\|_{L^\infty(\xi^2\leq 1)}.$$

In the region $1/t_2\leq \xi^2\leq 1/t_1\leq 1/t_0$, we split
$$A_{t_1,t_2}=A_{t_1,1/\xi^2}+A_{1/\xi^2,t_2}=I+J.$$
For $I$ we use again Lemma \ref{controls2}
$$|I|\leq a^2\int_{t_1}^{1/\xi^2}\,\frac{|\hat{u}(\tau,-\xi)|}{\tau}d\tau\leq C(a,\delta)\frac{|\hat{u}(t_0,\xi)|+|\hat{u}(t_0,-\xi)|}{(\xi^2\,t_0)^\delta}|\log\xi^2|,$$
and for $J$ we use Lemma \ref{IBP}
$$|J|\leq \frac{C(a,\delta)}{(\xi^2\,t_0)^\delta}\frac{|\hat{u}(t_0,\xi)|+|\hat{u}(t_0,-\xi)|}{\xi^2\frac{1}{\xi^2}}=C(a,\delta)\frac{|\hat{u}(t_0,\xi)|+|\hat{u}(t_0,-\xi)|}{(\xi^2\,t_0)^\delta}.$$
Then for $0<\delta<1/4-\gamma$
$$\|A_{t_1,t_2}\|_{L^2(1/t_2\leq \xi^2\leq 1/t_1)}\leq C(a,\delta)\frac{\||\xi|^{2\gamma}\hat{u}(t_0,\xi)\|_{L^\infty(\xi^2\leq 1/t_1)}}{t_0^\delta} \left\|\frac{\log\xi^2}{\xi^{2(\gamma+\delta)}}\right\|_{L^2(\xi^2\leq 1)}$$
$$\leq C(a,\delta)\frac{1+\log t_1}{t_0^\delta\,t_1^{\frac 14-(\gamma+\delta)}}\||\xi|^{2\gamma}\hat{u}(t_0,\xi)\|_{L^\infty(\xi^2\leq 1)}.$$

In the last region $1/t_1\leq \xi^2\leq 1/t_0$ we use Lemma \ref{IBP}
$$\|A_{t_1,t_2}\|_{L^2(1/t_1\leq\xi^2\leq 1/t_0)}\leq C(a,\delta)\frac{1}{t_1}\frac{\||\xi|^{2\gamma}\hat{u}(t_0,\xi)\|_{L^\infty(\xi^2\leq 1)}}{t_0^\delta}\left\|\frac{1}{\xi^{2+2(\gamma+\delta)}}\right\|_{L^2(1/t_1\leq \xi^2\leq 1)} $$
$$\leq C(a,\delta)\frac{\||\xi|^{2\gamma}\hat{u}(t_0,\xi)\|_{L^\infty(\xi^2\leq 1)}}{t_1\,t_0^\delta}\,t_1^{\frac 34+(\gamma+\delta)}=C(a,\delta)\frac{1}{t_0^\delta\,t_1^{\frac 14-(\gamma+\delta)}}\||\xi|^{2\gamma}\hat{u}(t_0,\xi)\|_{L^\infty(\xi^2\leq 1)}.$$

In conclusion, we have obtained
\begin{equation}\label{A}
\left\|A_{t_1,t_2}\right\|_{L^2}\leq C(a)\frac{t_0}{t_1}\|u(t_0)\|_{L^2}+C(a,\delta)\frac{1+\log t_1}{t_0^\delta\,t_1^{\frac 14-(\gamma+\delta)}}\||\xi|^{2\gamma}\hat{u}(t_0,\xi)\|_{L^\infty(\xi^2\leq 1)}
\end{equation}
$$\leq C(a,\delta)\left(t_0^\frac 14\frac{t_0}{t_1}+\frac{\sqrt{t_0}\,(1+\log t_1)}{t_0^{\gamma+\delta}\,t_1^{\frac 14-(\gamma+\delta)}}\right)\|u(t_0)\|_{X_{t_0}^\gamma}.$$
Therefore we have a limit $u_+\in L^2$ of $e^{-i(t-t_0)\partial_x^2}\,u(t,x)$ as $t$ goes to infinity. To get the decay rate \eqref{rate} we fix $t_1=t$ and $t_2=\infty$,
$$\left\|u_+-e^{-i(t-t_0)\partial_x^2}\,u(t,x)\right\|_{L^2}=\|A_{t,\infty}\|_{L^2}\leq C(a,\delta)\,t_0^{\frac 12-(\gamma+\delta)}\frac{1+\log t}{t^{\frac 14-(\gamma+\delta)}}\|u(t_0)\|_{X_{t_0}^\gamma},$$
for any $0<\delta<\frac 14-\gamma$, and since $t_0\geq 1$ the Proposition follows. 
\end{proof}

We have used in this proof the Lemmas \ref{controls}, \ref{controls2} and  \ref{IBP}, that are pointwise estimates in Fourier, so they apply to higher order derivatives. If $\partial_x^k u(t_0)\in X(t_0)^\gamma$, for $0\leq k\leq s$, we get then similar estimates as \eqref{A},
$$\left\|\partial_x^k\,A_{t_1,t_2}\right\|_{L^2}\leq C(a,\delta)\,\frac{(1+\log t_0)t_0^{\frac 12-(\gamma+\delta)}}{t_1^{\frac 14-(\gamma+\delta)}}\|\partial_x^k\,u(t_0)\|_{X_{t_0}^\gamma}.$$
Therefore we get a limit $u_+\in H^s$ of $e^{-i(t-t_0)\partial_x^2}\,u(t,x)$ as $t$ goes to infinity and
$$\left\|u_+-e^{-i(t-t_0)\partial_x^2}\,u(t,x)\right\|_{\dot{H}^k}=\|\partial_x^kA_{t,\infty}\|_{L^2}\leq C(a,\delta)\,\frac{(1+\log t_0)t_0^{\frac 12-(\gamma+\delta)}}{t^{\frac 14-(\gamma+\delta)}}\|\partial_x^ku(t_0)\|_{X_{t_0}^\gamma}.$$
Let us state this result.

\begin{corollary}
\label{linearascHs}
Let $s\in\mathbb N$ and $t_0\geq 1$. Let $u(t_0)$ be a function in $X_{t_0}^\gamma$ such that $\partial_x^k u(t_0)\in X_{t_0}^\gamma$ for all $0\leq k\leq s$. Then the unique global solution $u\in Y_{t_0}^\gamma$ of equation \eqref{lin} with $u(t_0)$ initial data at time $t_0$, with $\partial_x^k u\in Y_{t_0}^\gamma$ for all $0\leq k\leq s$, scatters in $H^s$. 
More precisely, there exists $u_+\in H^s$ such that
\begin{equation}\label{rateHs}
\|u(t)-e^{i(t-t_0)\partial_x^2}u_+\|_{\dot{H}^k}\leq C(a,\delta)\,\frac{(1+\log t_0)t_0^{\frac 12-(\gamma+\delta)}}{t^{\frac 14-(\gamma+\delta)}}\|\partial_x^ku(t_0)\|_{X_{t_0}^\gamma}\underset{t\tend\infty}{\longrightarrow} 0,
\end{equation}
for any $0<\delta<1/4-\gamma$.
\end{corollary}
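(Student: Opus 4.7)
The plan is to bootstrap Proposition \ref{linearasc} to higher regularity by exploiting the linearity of \eqref{lin} and the fact that every estimate in \S\ref{ss-lin-controls} is pointwise on the Fourier side. Since the coefficients of \eqref{lin} do not depend on $x$, differentiating yields
$$i(\partial_x^k u)_t + (\partial_x^k u)_{xx} \pm \frac{a^2}{t^{1\pm 2ia^2}}\,\overline{\partial_x^k u} = 0,$$
so $\partial_x^k u$ solves \eqref{lin} with initial datum $\partial_x^k u(t_0)\in X_{t_0}^\gamma$. Corollary \ref{glHs} then places $\partial_x^k u$ in $Y_{t_0}^\gamma$ with the expected norm control, providing the a priori bounds needed to rerun the scattering argument.

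Next I would reprove Proposition \ref{linearasc} verbatim for $\partial_x^k u$ in place of $u$. The key observation, already flagged in the paragraph preceding the statement, is that Lemmas \ref{controls}, \ref{controls2} and \ref{IBP} control $|\hat u(t,\xi)|$ pointwise in $\xi$ by $|\hat u(t_0,\pm\xi)|$ with $\xi$-dependent prefactors that do not see derivatives; multiplying through by $|\xi|^{2k}$ converts them into identical estimates for $\widehat{\partial_x^k u} = (i\xi)^k\hat u$. Hence the four-region split of the proof of Proposition \ref{linearasc} (on $1/t_0\leq \xi^2$, $\xi^2\leq 1/t_2$, $1/t_2\leq\xi^2\leq 1/t_1$, $1/t_1\leq\xi^2\leq 1/t_0$) produces
$$\|\partial_x^k A_{t_1,t_2}\|_{L^2} \leq C(a,\delta)\,t_0^{\frac 12-(\gamma+\delta)}\,\frac{1+\log t_1}{t_1^{\frac 14-(\gamma+\delta)}}\,\|\partial_x^k u(t_0)\|_{X_{t_0}^\gamma},$$
which tends to $0$ as $t_1,t_2\to\infty$. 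Thus $e^{-i(t-t_0)\partial_x^2}\partial_x^k u(t)$ is Cauchy in $L^2$ and has a limit $v_+^{(k)}\in L^2$.

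It remains to identify $v_+^{(k)}$ with $\partial_x^k u_+$, where $u_+$ is the asymptotic state of $u$ produced by Proposition \ref{linearasc}. On the Fourier side both $\hat v_+^{(k)}(\xi)$ and $(i\xi)^k\hat u_+(\xi)$ are obtained, after multiplication by $e^{-it_0\xi^2}$, as the pointwise limit $\lim_{t\to\infty}e^{it\xi^2}(i\xi)^k\hat u(t,\xi)$; the uniqueness of distributional limits therefore forces $v_+^{(k)}=\partial_x^k u_+$, so $u_+\in H^s$. Specializing the displayed estimate to $t_1=t$ and $t_2=\infty$ and using $t_0\geq 1$ yields the bound \eqref{rateHs}. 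The only genuine (and mild) obstacle is this commutation of $\partial_x^k$ with the $L^2$-limit, but the uniform $\dot H^k$-Cauchyness obtained above makes the identification automatic; everything else is a direct consequence of the linearity of \eqref{lin} and the pointwise-in-$\xi$ nature of the three basic lemmas.
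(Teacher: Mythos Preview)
Your proposal is correct and follows exactly the approach the paper takes: exploit linearity and the pointwise-in-$\xi$ nature of Lemmas \ref{controls}, \ref{controls2}, \ref{IBP} to rerun the proof of Proposition \ref{linearasc} with $\partial_x^k u$ in place of $u$, obtaining the same bound on $\|\partial_x^k A_{t_1,t_2}\|_{L^2}$. The only addition you make beyond the paper's two-line argument is the explicit identification $v_+^{(k)}=\partial_x^k u_+$, which the paper leaves implicit but which you handle correctly via uniqueness of the Fourier-side limit.
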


\subsection{A-posteriori estimates}\label{ss-lin-aposteriori}
In this subsection we give some extra-estimates first on the asymptotic state $u_+$, and then on $u(t)$, solution of \eqref{lin} with initial condition $u(t_0)\in X_{t_0}$ and $t_0\geq 1$. 
By Proposition \ref{linearasc} we know already that $u_+\in L^2$ with
$$\|u_+\|_{L^2}\leq \|u(t)\|_{L^2}+C(a,\delta)\,\frac{(1+\log t_0)t_0^{\frac 12-(\gamma+\delta)}}{t^{\frac 14-(\gamma+\delta)}}\|u(t_0)\|_{X_{t_0}^\gamma},$$
for all $t\geq t_0\geq 1$, and by using \eqref{est} we obtain the bound
\begin{equation}\label{u+L2}
\|u_+\|_{L^2}\leq C(a)\,t_0^\frac 14\|u(t_0)\|_{X_{t_0}^\gamma}.
\end{equation}
Next we shall derive a control of the asymptotic state $u_+$ in the spirit of the one in Lemma \ref{controls2} on the solution $u(t)$.

\begin{lemma}\label{u+}
Let $0<\delta$.
The function $u_+$ satisfies for all $\xi\neq 0$ the estimate
\begin{equation}\label{u+mod}
|\hat{u}_+(\xi)|\leq\left(C(a)+C(a,\delta)\frac{1+|\log|\xi||}{(\xi^2\,t_0)^\delta}\right)\left(|\hat{u}(t_0,\xi)|+|\hat{u}(t_0,-\xi)|\right).
\end{equation}
\end{lemma}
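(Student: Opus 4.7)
The plan is to obtain the estimate on $\hat{u}_+(\xi)$ by writing the scattering state explicitly in Duhamel form and then applying the two previously proved Fourier-side estimates: the uniform pointwise bound of Lemma \ref{controls2} and the stationary-phase/IBP bound of Lemma \ref{IBP}. The breakdown of cases mimics the one already used in the proof of Proposition \ref{linearasc}, with the difference that now the integral goes all the way up to $+\infty$ and we want a pointwise (not $L^2$) estimate on the Fourier side.

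First I would make the definition of $u_+$ explicit on the Fourier side. Since $u_+=\lim_{t\to\infty}e^{-i(t-t_0)\partial_x^2}u(t)$ in $L^2$, the identity
$$\hat u(t,\xi)=e^{-i(t-t_0)\xi^2}\hat u(t_0,\xi)\pm ia^2\int_{t_0}^{t}e^{-i(t-\tau)\xi^2}\frac{\overline{\hat u(\tau,-\xi)}}{\tau^{1\pm 2ia^2}}\,d\tau$$
together with the $L^2$-convergence of $e^{i(t-t_0)\xi^2}\hat u(t,\xi)$ give
$$\hat u_+(\xi)=\hat u(t_0,\xi)\pm ia^2\,e^{it_0\xi^2}\int_{t_0}^{\infty}\frac{e^{i\tau\xi^2}\,\overline{\hat u(\tau,-\xi)}}{\tau^{1\pm 2ia^2}}\,d\tau,$$
and the convergence of the last integral (for $\xi\neq 0$) is guaranteed by Lemma \ref{IBP} applied with $t_2\to\infty$.

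Next I would split the integral according to the size of $\xi^2 t_0$. In the regime $\xi^2 t_0\geq 1$, Lemma \ref{IBP} applied with $t_1=t_0$, $t_2=\infty$ yields directly
$$\Bigl|\int_{t_0}^{\infty}\cdots\,d\tau\Bigr|\leq \Bigl(C(a)+\tfrac{C(a,\delta)}{(\xi^2 t_0)^\delta}\Bigr)\frac{|\hat u(t_0,\xi)|+|\hat u(t_0,-\xi)|}{\xi^2 t_0},$$
and the extra $1/(\xi^2 t_0)$ factor is harmless since $\xi^2 t_0\geq 1$; no logarithmic loss appears. In the regime $\xi^2 t_0<1$ I split the $\tau$-integral at the stationary-phase time $\tau=1/\xi^2$. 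On $[1/\xi^2,\infty)$ Lemma \ref{IBP} again gives a bound by $\bigl(C(a)+C(a,\delta)(\xi^2 t_0)^{-\delta}\bigr)(|\hat u(t_0,\xi)|+|\hat u(t_0,-\xi)|)$. On $[t_0,1/\xi^2]$ integration by parts is not efficient, so I estimate crudely using $|e^{i\tau\xi^2}/\tau^{1\pm 2ia^2}|=1/\tau$ and plug in Lemma \ref{controls2}:
$$\Bigl|\int_{t_0}^{1/\xi^2}\cdots\,d\tau\Bigr|\leq \Bigl(C(a)+\tfrac{C(a,\delta)}{(\xi^2 t_0)^\delta}\Bigr)\bigl(|\hat u(t_0,\xi)|+|\hat u(t_0,-\xi)|\bigr)\int_{t_0}^{1/\xi^2}\frac{d\tau}{\tau}.$$
The remaining time integral equals $\log\bigl(1/(\xi^2 t_0)\bigr)$, which in this regime is bounded by $2|\log|\xi||+|\log t_0|\leq C(1+|\log|\xi||)$ once we absorb $\log t_0$ into the factor $(\xi^2 t_0)^{-\delta}$ (since $t_0^{-\delta}\log t_0$ is bounded up to a constant depending on $\delta$, and $t_0\geq 1$ only helps). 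This is exactly where the $1+|\log|\xi||$ factor in the statement enters.

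Combining the two regimes and adding the prefactor $a^2$ and the trivial contribution $|\hat u(t_0,\xi)|$ from the boundary term yields the claimed bound. The only mildly delicate point is the bookkeeping of the logarithm in the low-frequency range $\xi^2 t_0<1$: one has to verify that $\log(1/\xi^2 t_0)$ can be written as a $(\xi^2 t_0)$-independent factor times $1+|\log|\xi||$ modulo the constant $C(a,\delta)$; this is where the explicit $1+|\log|\xi||$ in the statement (rather than a pure power of $\xi$) becomes unavoidable, exactly as already happened in the $L^2$ estimate of Proposition \ref{linearasc}. No step beyond what is already contained in Lemmas \ref{controls2} and \ref{IBP} is required.
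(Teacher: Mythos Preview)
Your proposal is correct and follows essentially the same route as the paper: write $\hat u_+(\xi)=\hat u(t_0,\xi)+e^{it\xi^2}A_{t_0,\infty}(\xi)$, use Lemma~\ref{IBP} directly when $\xi^2 t_0\geq 1$, and when $\xi^2 t_0<1$ split the Duhamel integral at $\tau=1/\xi^2$, applying Lemma~\ref{controls2} on $[t_0,1/\xi^2]$ and Lemma~\ref{IBP} on $[1/\xi^2,\infty)$; the paper simply cites the pointwise bounds already obtained inside the proof of Proposition~\ref{linearasc} instead of rewriting them. One minor simplification: since $t_0\geq 1$, the time integral $\int_{t_0}^{1/\xi^2}\frac{d\tau}{\tau}=\log\frac{1}{\xi^2 t_0}\leq \log\frac{1}{\xi^2}=2|\log|\xi||$ directly, so the detour through $|\log t_0|$ and its absorption into $(\xi^2 t_0)^{-\delta}$ is not needed.
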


\begin{proof}
We have in $L^2$ and pointwise in Fourier variables,
\begin{equation}\label{Duhamelu+}
u_+(x)=u(t_0,x)+ia^2\int_{t_0}^\infty e^{-i\tau\partial_x^2}\frac{\overline{u(\tau,x)}}{\tau^{1\pm 2ia^2}}d\tau,
\end{equation}
so
$$\hat{u_+}(\xi)=\hat{u}(t_0,\xi)+e^{it\xi^2}A_{t_0,\infty}(\xi),$$
and
$$|\hat{u_+}(\xi)|\leq|\hat{u}(t_0,\xi)|+|A_{t_0,\infty}(\xi)|.$$

For the region $1/t_0\leq\xi^2$ the conclusion follows immediately from  Lemma \ref{IBP}. 

For the region $\xi^2\leq 1/t_0$ we have obtained in the proof of Proposition \ref{linearasc} that 
$$|A_{t_0,\infty}(\xi)|\leq C(a,\delta)\frac{|\hat{u}(t_0,-\xi)|+|\hat{u}(t_0,\xi)|}{(\xi^2\,t_0)^\delta}|\log\xi^2|+C(a,\delta)\frac{|\hat{u}(t_0,\xi)|+|\hat{u}(t_0,-\xi)|}{(\xi^2\,t_0)^\delta},$$
and the Lemma follows.

\end{proof}

In particular, for all $\xi^2\leq 1/t_0$ we have
$$\frac{|\xi|^{2(\gamma+\delta)}}{1+|\log|\xi||}|\hat{u_+}(\xi)|\leq C(a,\delta)\, t_0^{\frac 12-(\gamma+\delta)}\|u(t_0)\|_{X_{t_0}^\gamma},$$
for any $0<\delta$. So, if $t_0=1$, we get for all $\xi^2\leq 1$ and for any $0<\delta$,
\begin{equation}\label{fourierlin}
|\xi|^{2(\gamma+\delta)}|\hat{u_+}(\xi)|\leq C(a,\delta)\, \|u(1)\|_{X_{1}^\gamma}.
\end{equation}

We end this section with a regularity property of the solutions of \eqref{lin}.

\begin{prop}\label{str}
Under the assumptions of Proposition \ref{linearasc}, the solution $u(t)$ belongs to $L^4((t_0,\infty),L^\infty)$ with the bound
$$\|u\|_{L^4((t_0,\infty),L^\infty)}\leq C(a)\,t_0^{\frac 14}\,(1+\log^2 t_0)\,\|u(t_0)\|_{X_{t_0}^\gamma},$$
and so does also $u(t)-e^{i(t-t_0)\partial_x^2}u_+$.
\end{prop}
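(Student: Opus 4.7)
The plan is to split
\begin{equation*}
u(t) = e^{i(t-t_0)\partial_x^2}u_+ + R(t), \qquad R(t) := u(t) - e^{i(t-t_0)\partial_x^2}u_+,
\end{equation*}
and to estimate each piece separately. From the Duhamel formula for \eqref{lin} together with the definition of $u_+$, one has
\begin{equation*}
R(t) = \pm i a^2\int_t^\infty e^{i(t-\tau)\partial_x^2}\frac{\overline{u(\tau)}}{\tau^{1\pm 2ia^2}}\,d\tau,
\end{equation*}
so on the Fourier side $\hat R(t,\xi) = \pm i\,A_{t,\infty}(\xi)$ with $A_{t,\infty}$ as in Lemma \ref{IBP}. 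For the homogeneous piece the endpoint $1$D Strichartz estimate $\|e^{it\partial_x^2}f\|_{L^4_t L^\infty_x}\lesssim \|f\|_{L^2}$ together with the $L^2$ bound \eqref{u+L2} yields at once
\begin{equation*}
\|e^{i(t-t_0)\partial_x^2}u_+\|_{L^4((t_0,\infty),L^\infty)} \leq C(a)\,t_0^{1/4}\,\|u(t_0)\|_{X_{t_0}^\gamma}.
\end{equation*}

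For the remainder I would use the Hausdorff--Young inequality $\|R(t)\|_{L^\infty_x}\leq C\,\|\hat R(t,\cdot)\|_{L^1_\xi}$ and control $\|\hat R(t)\|_{L^1_\xi}$ through two complementary pointwise-in-$\xi$ bounds that are already at our disposal. On the one hand Lemma \ref{IBP} gives
\begin{equation*}
|\hat R(t,\xi)|\leq \left(C(a)+\frac{C(a,\delta)}{(\xi^2 t_0)^\delta}\right)\frac{|\hat u(t_0,\xi)|+|\hat u(t_0,-\xi)|}{\xi^2\,t};
\end{equation*}
on the other hand the identity $\hat R(t,\xi)=\hat u(t,\xi)-e^{-i(t-t_0)\xi^2}\hat u_+(\xi)$ combined with Lemmas \ref{controls2} and \ref{u+} yields
\begin{equation*}
|\hat R(t,\xi)|\leq C(a,\delta)\,\frac{1+|\log|\xi||}{(\xi^2 t_0)^\delta}\,(|\hat u(t_0,\xi)|+|\hat u(t_0,-\xi)|).
\end{equation*}
I would then decompose $\|\hat R(t)\|_{L^1_\xi}$ into three regions: in $|\xi|^2\geq 1$ apply the first bound together with Cauchy--Schwarz and $\|u(t_0)\|_{L^2}\leq t_0^{1/4}\|u(t_0)\|_{X_{t_0}^\gamma}$; in $1/t\leq|\xi|^2\leq 1$ apply the first bound and the $X_{t_0}^\gamma$-weighted estimate $|\hat u(t_0,\xi)|\leq (\sqrt{t_0}/t_0^\gamma)\,|\xi|^{-2\gamma}\|u(t_0)\|_{X_{t_0}^\gamma}$; in $|\xi|^2\leq 1/t$ apply the second bound. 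Each region produces a function of $t$ of the form $C(a,\delta)\,t_0^\alpha(1+\log t)^\beta\,t^{-\sigma}$ whose $L^4_t$-integrability over $(t_0,\infty)$ is guaranteed precisely by the hypothesis $\gamma+\delta<1/4$; explicit evaluation of these integrals gives $C(a)\,t_0^{1/4}(1+\log^2 t_0)\,\|u(t_0)\|_{X_{t_0}^\gamma}$, and combining with the homogeneous contribution one obtains the stated estimate. Since the bound is established directly on $R(t)$, it also holds for $u(t)-e^{i(t-t_0)\partial_x^2}u_+$.

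The main obstacle is the small-frequency region $|\xi|^2\leq 1/t$, where Lemma \ref{IBP} becomes useless because of the $1/(\xi^2\,t)$ singularity; one is forced to use the direct bound via $|\hat u(t,\xi)|+|\hat u_+(\xi)|$, which inherits the unavoidable logarithmic factor from Lemma \ref{u+}. The $(1+\log^2 t_0)$ prefactor in the final estimate arises precisely from matching this logarithm against the polynomial decay $t^{-(1/2-\gamma-\delta)}$ inside the $L^4_t$-integral over $(t_0,\infty)$.
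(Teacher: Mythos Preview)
Your proposal is correct but follows a genuinely different route from the paper. The paper starts from the \emph{forward} Duhamel formula for $u(t)$, adds and subtracts $e^{i(\tau-t_0)\partial_x^2}u_+$ inside the integral, and is then left with an explicit oscillatory term $\int_{t_0}^t e^{i(t-2\tau)\partial_x^2}\frac{e^{it_0\partial_x^2}\overline{u_+}}{\tau^{1\pm 2ia^2}}\,d\tau$ which it analyzes by a frequency trichotomy, integration by parts in $\tau$, and an ad hoc cut-off Strichartz estimate (their Lemma~2.11). Your approach bypasses all of this: you work directly with the tail $R(t)$, replace the Strichartz machinery by the cruder bound $\|R(t)\|_{L^\infty_x}\le C\|\hat R(t)\|_{L^1_\xi}$, and feed in the pointwise-in-$\xi$ estimates from Lemmas~\ref{IBP}, \ref{controls2} and~\ref{u+} that are already on the shelf. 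The three regions you list indeed give contributions of order $t^{-1}$, $t^{-1/2+\gamma+\delta}$ and $(1+\log t)\,t^{-1/2+\gamma+\delta}$ respectively, all in $L^4(t_0,\infty)$ since $\gamma+\delta<1/4$, and the $t_0$-dependence comes out as stated. Your argument is shorter and more elementary; the paper's approach keeps the Strichartz structure visible, which is the natural dispersive framework and would transfer more readily to settings where such precise pointwise Fourier control is unavailable. Two small remarks: Lemma~\ref{IBP} is stated for finite $t_2$, but its bound is uniform in $t_2$ so the extension to $A_{t,\infty}$ is immediate; and your region $|\xi|^2\le 1/t$ in fact only produces a single power of $\log t_0$ after the $L^4_t$ integration, so your bound is slightly sharper than the $(1+\log^2 t_0)$ you claim.
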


\begin{proof}
We use the Duhamel formulae
$$u(t)=e^{i(t-t_0)\partial_x^2}u(t_0)+ia^2\int_{t_0}^t e^{i(t-\tau)\partial_x^2}\frac{\overline{u(\tau)}}{\tau^{1\pm 2ia^2}}d\tau$$
$$=e^{i(t-t_0)\partial_x^2}u(t_0)+ia^2\int_{t_0}^t e^{i(t-\tau)\partial_x^2}\frac{\overline{u(\tau)-e^{i(\tau-t_0)\partial_x^2}u_+}}{\tau^{1\pm 2ia^2}}d\tau+ia^2\int_{t_0}^t e^{i(t-2\tau)\partial_x^2}\frac{e^{it_0\partial_x^2}\overline{u_+}}{\tau^{1\pm 2ia^2}}d\tau.$$
Since $(4,\infty)$ is a Strichartz 1-d admissible couple, we can upper-bound the $L^4((t_0,\infty),L^\infty)$ norm of the first and of the second term by
$$M=C\|u(t_0)\|_{L^2}+a^2\int_{t_0}^\infty\frac{\|u(\tau)-e^{i(\tau-t_0)\partial_x^2}u_+\|_{L^2}}{\tau}d\tau,$$
and by using the rate of decay of Proposition \ref{linearasc}, for some $0<\delta<\frac 14-\gamma$,
$$M\leq C\|u(t_0)\|_{L^2}+C(a)\,(1+\log t_0)t_0^{\frac 12-(\gamma+\delta)}\|u(t_0)\|_{X_{t_0}}\int_{t_0}^\infty\frac{d\tau}{\tau^{\frac 54-(\gamma+\delta)}}$$
$$\leq C(a)\,(1+\log t_0)t_0^{\frac 14}\,\|u(t_0)\|_{X_{t_0}^\gamma}.$$
Therefore we only need to estimate in $L^4((t_0,\infty),L^\infty)$ the last term. Let $\theta(x)$ be a cut-off function with $\theta(x)=0$ for $|x|<\frac 12$ and $\theta(x)=1$ for $|x|>1$. We decompose as usual the domain of the Fourier variable into three regions, $\xi^2\ls 1/t$, $1/t\leq\xi^2\leq 1/t_0$ and $1/t_0\leq\xi^2$,
$$\int_{t_0}^t e^{i(t-2\tau)\partial_x^2}\frac{e^{it_0\partial_x^2}\overline{u_+}}{\tau^{1\pm 2ia^2}}d\tau=\int e^{ix\xi}\,e^{-it\xi^2}\,e^{-it_0\xi^2}\,\overline{\hat{u_+}(-\xi)}  \int_{t_0}^{t}\frac{e^{i2\tau\xi^2}}{\tau^{1\pm 2ia^2}}d\tau d\xi$$
$$=\int (1-\theta)(t\xi^2)+\int \theta(t\xi^2)\,(1-\theta)(t_0\xi^2)+\int \theta(t\xi^2)\,\theta(t_0\xi^2)=I+J+K.$$

For $I$ we integrate directly in $\tau$, 
$$|I(t)|\leq \int_{\xi^2\leq 1/t} |\hat{u_+}(-\xi)|\log t \,d\xi$$
and we apply Lemma \ref{u+}, for some $0<\delta<\frac 14-\gamma$,
$$|I(t)|\leq C(a)\frac{\log t}{t_0^{\delta}}\int_{\xi^2\leq 1/t}\frac{1+|\log|\xi||}{\xi^{2\delta}}(|\hat{u}(t_0,\xi)|+|\hat{u}(t_0,-\xi)|)\,d\xi
\leq C(a)\frac{\||\xi|^{2\gamma}\hat{u}(t_0,\xi)\|_{L^\infty(\xi^2\leq 1)}}{t_0^{\delta}}\,\frac{\log^2 t}{t^{\frac 12-(\gamma+\delta)}}.$$
Then
$$\|I\|_{L^4((t_0,\infty),L^\infty)}\leq C(a)\,\frac{\||\xi|^{2\gamma}\hat{u}(t_0,\xi)\|_{L^\infty(\xi^2\leq 1)}}{t_0^{\delta}}\,\frac{1+\log^2 t_0}{t_0^{\frac 14-(\gamma+\delta)}}\leq C(a)\,t_0^{\frac 14}\,(1+\log^2 t_0)\,\|u(t_0)\|_{X_{t_0}^\gamma}.$$

To treat $J$ we first split the integral in $\tau$ into two parts
$$J=\int e^{ix\xi}\,e^{-it\xi^2}\,\theta(t\xi^2)\,(1-\theta)(t_0\xi^2)\,e^{-it_0\xi^2}\,\overline{\hat{u_+}(-\xi)}  \int_{t_0}^{1/\xi^2}\frac{e^{i2\tau\xi^2}}{\tau}d\tau d\xi$$
$$+\int e^{ix\xi}\,e^{-it\xi^2}\,\theta(t\xi^2)\,(1-\theta)(t_0\xi^2)\,e^{-it_0\xi^2}\,\overline{\hat{u_+}(-\xi)}  \int_{1/\xi^2}^t\frac{e^{i2\tau\xi^2}}{\tau}d\tau d\xi=J_1+J_2.$$
We need the following lemma.
\begin{lemma}
Define $U_tf$ as $\widehat{U_tf}(\xi)=\phi (\sqrt{|t|}\xi)\,e^{-it\xi^2}\hat{f}(\xi)$, with $\|\phi\|_{L^\infty}+\|\phi'\|_{L^1}\leq C$. Then
$$\|U_tf\|_{L^4_tL^\infty_x}\leq C\|f\|_{L^2}.$$
\end{lemma}
\begin{proof}
The lemma follows from the usual $TT^*$ argument and the elementary inequality
$$\int e^{-it\xi^2+ix\,\xi}\phi(\sqrt{|t|}\xi)\leq \frac{C}{\sqrt{|t|}}\left(\|\phi\|_{L^\infty}+\|\phi'\|_{L^1}\right).$$
\end{proof}
Therefore we get the following estimate for $J_1$
$$\|J_1\|_{L^4((t_0,\infty),L^\infty)}\leq C\,\left\|(1-\theta)(t_0\,\xi^2)\,\overline{\hat{u_+}(-\xi)}  \int_{t_0}^{1/\xi^2}\frac{e^{i2\tau\xi^2}}{\tau}d\tau \right\|_{L^2}\leq C\,\left\|\hat{u_+}(-\xi) \log |\xi|\right\|_{L^2(\xi^2\leq 1/t_0)}.$$
Now we use Lemma \ref{u+} and get
$$\|J_1\|_{L^4((t_0,\infty),L^\infty)}\leq C(a)\,\frac{\||\xi|^{2\gamma}\hat{u}(t_0,\xi)\|_{L^\infty(\xi^2\leq 1)}}{t_0^\delta}\,\left\|\frac{1+\log^2|\xi|}{\xi^{2(\gamma+\delta)}}\right\|_{L^2(\xi^2\leq 1/t_0)}$$
$$\leq C(a)\,\frac{1+\log^2 t_0}{t_0^{{\frac 14}-\gamma}}\,\||\xi|^{2\gamma}\hat{u}(t_0,\xi)\|_{L^\infty(\xi^2\leq 1)}\leq C(a)\,t_0^{\frac 14}\,(1+\log^2 t_0)\,\|u(t_0)\|_{X_{t_0}^\gamma}.$$
For $J_2$ we perform first the integration by parts
$$\int_{1/\xi^2}^t\frac{e^{i2\tau\xi^2}}{\tau}d\tau=
\left.\frac {e^{i2\tau\xi^2}}{2i\,\xi^2\tau}\right|_{1/\xi^2}^t+\int _{1/\xi^2}^{t}\frac{e^{i2\tau\xi^2}}{2i\,\xi^2\tau^2}d\tau=\frac {e^{i2t\xi^2}}{2i\,\xi^2t}-\frac {e^{i2}}{2i}+\int _{1}^{t\xi^2}\frac{e^{i2\tau}}{2i\tau^2}d\tau$$
$$=\frac {e^{i2t\xi^2}}{2i\,\xi^2t}-\int _{t\xi^2}^{\infty}\frac{e^{i2\tau}}{2i\tau^2}d\tau-\frac {e^{i2}}{2i}+\int _{1}^{\infty}\frac{e^{i2\tau}}{2i\tau^2}d\tau.$$
Therefore
$$|J_2(t)|\leq \frac{C}{t}\int_{1/2t\leq \xi^2\leq 1/t_0}\frac{|\hat{u_+}(-\xi)|}{\xi^2}\,  d\xi+C\left|\int e^{ix\xi}\,e^{-it\xi^2}\,\theta(t\,\xi^2)\,(1-\theta)(t_0\,\xi^2)\,e^{-it_0\xi^2}\,\overline{\hat{u_+}(-\xi)} d\xi\right|.$$
For the first term we use again Lemma \ref{u+}, and get
$$\frac{C}{t}\int_{1/2t \leq \xi^2\leq 1/t_0}\frac{|\hat{u_+}(-\xi)|}{\xi^2}\,  d\xi\leq C(a)\,\frac{\||\xi|^{2\gamma}\hat{u}(t_0,\xi)\|_{L^\infty(\xi^2\leq 1)}}{t\,t_0^\delta}\,\left\|\frac{1+|\log|\xi||}{\xi^{2+2(\gamma+\delta)}}\right\|_{L^1(1/2t \leq\xi^2\leq 1/t_0)}$$
$$\leq C(a) \frac{1+\log t}{t^{\frac 12-(\gamma+\delta)}}\,\frac{\||\xi|^{2\gamma}\hat{u}(t_0,\xi)\|_{L^\infty(\xi^2\leq 1)}}{t_0^\delta}\,.$$
The second term of $J_2$ is similar to a linear evolution as $J_1$. We obtain
$$\|J_2\|_{L^4((t_0,\infty),L^\infty)}\leq C(a)\,\frac{1+\log^2 t_0}{t_0^{\frac 14-\gamma}}\,\||\xi|^{2\gamma}\hat{u}(t_0,\xi)\|_{L^\infty(\xi^2\leq 1)},$$
so 
$$\|J\|_{L^4((t_0,\infty),L^\infty)}\leq C(a)\,t_0^{\frac 14}\,(1+\log^2 t_0)\,\|u(t_0)\|_{X_{t_0}^\gamma}.$$

For $K$ we use again the integration by parts
$$\int_{t_0}^t\frac{e^{i2\tau\xi^2}}{\tau}d\tau=\frac {e^{i2t\xi^2}}{2i\,\xi^2t}-\int _{t\xi^2}^{\infty}\frac{e^{i2\tau}}{2i\tau^2}d\tau-\frac {e^{i2t_0\xi^2}}{2i\,\xi^2t_0}+\int _{t_0}^{\infty}\frac{e^{i2\tau\xi^2}}{2i\,\xi^2\tau^2}d\tau,$$
hence
$$|K(t)|\leq \frac{C}{t}\int_{1/2t_0\leq \xi^2}\frac{|\hat{u_+}(-\xi)|}{\xi^2}\,  d\xi$$
$$+\left|\int e^{ix\xi}\,e^{-it\xi^2}\,\theta(t\,\xi^2)\,\,\theta(t_0\,\xi^2)\,e^{-it_0\xi^2}\,\overline{\hat{u_+}(-\xi)} \left(-\frac {e^{i2t_0\xi^2}}{2i\,\xi^2t_0}+\int _{t_0}^{\infty}\frac{e^{i2\tau\xi^2}}{2i\,\xi^2\tau^2}d\tau\right) d\xi\right|.$$
By Cauchy-Schwarz's inequality, the first term is upper-bounded by $C\frac{t_0^\frac 34}{t}\|u_+\|_{L^2}$. By \eqref{u+L2} this in turn is smaller than $C(a)\frac{t_0}{t}\,\|u(t_0)\|_{X_0^\gamma}$.
We get again, as for $J_2$,
$$\|K\|_{L^4((t_0,\infty),L^\infty)}\leq C(a)\,t_0^{\frac 14}\,(1+\log^2 t_0)\,\|u(t_0)\|_{X_{t_0}^\gamma}.$$

Summarizing, we have obtained the desired estimate
$$\|u\|_{L^4((t_0,\infty),L^\infty)}\leq C(a)\,t_0^{\frac 14}\,(1+\log^2 t_0)\,\|u(t_0)\|_{X_{t_0}^\gamma}.$$
The Strichartz inequalities for a free evolution together with \eqref{u+L2} give
$$\|e^{i(t-t_0)\partial_x^2}u_+\|_{L^4((t_0,\infty),L^\infty)}\leq C\|u_+\|_{L^2}\leq C(a)\,t_0^{{\frac 14}}\|u(t_0)\|_{X_{t_0}^\gamma},$$
so we also have 
$$\|u(t)-e^{i(t-t_0)\partial_x^2}u_+\|_{L^4((t_0,\infty),L^\infty)}\leq C(a)\,t_0^{{\frac 14}}(1+\log^2 t_0)\,\|u(t_0)\|_{X_{t_0}^\gamma}.$$
\end{proof}

Lemma \ref{u+} is a pointwise estimate for Fourier transforms, so it  fits for higher order derivatives. 
Again by linearity we have the results of Proposition \ref{str} at higher Sobolev order, if $\partial_x^k u(t_0)\in X(t_0)$ :
$\partial_x^k u(t)$ belongs to $L^4((t_0,\infty),L^\infty)$ with the bound
$$\|\partial_x^k u\|_{L^4((t_0,\infty),L^\infty)}\leq C(a)\,t_0^{{\frac 14}}(1+\log^2 t_0)\,\|\partial_x^k u(t_0)\|_{X_{t_0}^\gamma}.$$

\section{Scattering for the nonlinear equation}\label{s:nonlinear}
In this section we prove Theorem \ref{theorem1}. By using the results on the linear equation \eqref{lin} obtained in the previous section, we first infer in \S\ref{ss-nonlin-global} a global existence result for the nonlinear equation \eqref{nonlin}. Then we prove in \S\ref{ss-nonlin-asc} asymptotic completeness for these solutions. In the last subsection \S\ref{ss-nonlin-aposteriori} we give new information about the regularity of the asymptotic state, which completes the proof of Theorem \ref{theorem1}.\\

We start by writing the nonlinear solutions of \eqref{nonlin} in terms of solutions of the linear equation \eqref{lin}. Let us notice that the estimates obtained in the previous section are independent of the sign in \eqref{lin}, so in the sequel we shall consider only one of the signs - the other sign case can be treated the same. We denote by $S(t,t_0)f$ the solution of \eqref{lin} with sign plus
$$iu_t+u_{xx}+\frac{a^2}{t^{1+ 2ia^2}}\overline{u}=0,$$
with initial data $f$ at time $t_0\geq 1$. With this notation, for $t_0\leq t$ we have the estimates \eqref{est} of Proposition \ref{gl},
\begin{equation}\label{estL2}
\|S(t,t_0)f\|_{L^2}\leq C(a)\, t_0^\frac 14\,\|f\|_{X_{t_0}^\gamma},
\end{equation}
and
\begin{equation}\label{estmod}
\||\xi|^{2\gamma}\widehat{S(t,t_0)f}(\xi)\|_{L^\infty(\xi^2\leq 1)}\leq C\,\left(\frac{t}{t_0}\right)^{a^2}\||\xi|^{2\gamma}\hat{f}(\xi)\|_{L^\infty(\xi^2\leq 1)},
\end{equation}
and the one of Proposition \ref{str},
\begin{equation}\label{eststr}
\|S(\cdot,t_0)f\|_{L^4((t_0,\infty),L^\infty)}\leq C(a)\,t_0^{{\frac 14}}(1+\log^2 t_0)\,\|f\|_{X_{t_0}^\gamma},
\end{equation}
as well as all their equivalents at higher order derivatives, if $\partial_x^k f\in X_{t_0}^\gamma$.

Now the solution of 
$$u_t=i\left(u_{xx}+\frac{a^2}{t^{1+ 2ia^2}}\overline{u}+ \frac{F}{t}\right)$$
with $u(1)$ initial data at time $t=1$ writes 
\begin{equation}\label{nonlinformula}
u(t,x)=S(t,1)\,u(1)+\int_{1}^t S(t,\tau) \frac{iF\,(\tau)}{\tau} d\tau.
\end{equation}
It is enough to verify this formula for $u(1)=0$,  
$$\partial_t u=\partial_t\int_{1}^t S(t,\tau) \frac{iF\,(\tau)}{\tau} d\tau=i\frac{F}{t}+\int_{1}^ti\left(\partial_{xx}S(t,\tau) \frac{iF\,(\tau)}{\tau}+ \frac{a^2}{t^{1+ 2ia^2}}\overline{S(t,\tau) \frac{iF\,(\tau)}{\tau}}\right)d\tau$$
$$=i\left(\frac{F}{t}+u_{xx}+\frac{a^2}{t^{1+ 2ia^2}} \overline{u}\right).$$
In our case of \eqref{nonlin}, $F$ is composed of cubic and quadratic powers of $u$.

\subsection{Global existence}\label{ss-nonlin-global}

Let us recall again the definitions of the norms of $X_{1}^\gamma$ and $Y_{1}^\gamma$, for $0\leq\gamma<\frac14$,
$$\|f\|_{X_1^\gamma}=\|f\|_{L^2}+\||\xi|^{2\gamma}\hat{f}(\xi)\|_{L^\infty(\xi^2\leq 1)},$$
$$\|g\|_{Y_1^\gamma}=\sup_{t\geq 1}\,\left(\|g(t)\|_{L^2}+\frac{1}{t^{a^2}}\||\xi|^{2\gamma}\hat{g}(t,\xi)\|_{L^\infty(\xi^2\leq 1)}\right).$$

We have the following global existence result on the nonlinear equation \eqref{nonlin}.
\begin{prop}\label{global}
Let $u(1)$ be a function in $X_1^\gamma$ small with respect to $a$. Then there exists a unique global solution $u\in Z^\gamma=Y_1^\gamma\cap L^4((1,\infty),L^\infty)$ of equation \eqref{nonlin} with $u(1)$ initial data at time $t=1$, and
$$\|u\|_{Z^\gamma}\leq C(a)\, \|u(1)\|_{X_1^\gamma}.$$
\end{prop}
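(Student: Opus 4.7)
The strategy is a standard Banach contraction argument applied to the Duhamel map
\begin{equation*}
\Phi(u)(t) := S(t,1)u(1) + i\int_1^t S(t,\tau)\,\frac{F(u(\tau))}{\tau}\,d\tau
\end{equation*}
on a closed ball of $Z^\gamma$ of radius $R = 2C(a)\|u(1)\|_{X_1^\gamma}$. The linear contribution is handled directly by the three estimates (3.1)--(3.3) applied at $t_0 = 1$, which together give $\|S(\cdot,1)u(1)\|_{Z^\gamma} \leq C(a)\|u(1)\|_{X_1^\gamma}$.

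For the Duhamel term I use Minkowski in $\tau$ and treat $F(u(\tau))$ as a datum evolved by $S(\cdot,\tau)$ starting at the arbitrary time $\tau \geq 1$; the estimates (3.1)--(3.3) then apply at $t_0 = \tau$. The key structural point is that the potentially dangerous growth factor $(t/\tau)^{a^2}$ in (3.2) is absorbed by the $(t_0/t)^{a^2}$ weight of the $Y_1^\gamma$-norm, and the factor $\tau^{1/4}(1+\log^2 \tau)$ from (3.1) and (3.3) is tamed by the $1/\tau$ prefactor in Duhamel combined with the low-frequency weight of $X_\tau^\gamma$.

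Since $F(u) = O(|u|^3) + O(a|u|^2)$, I bound pointwise in $\tau$
\begin{align*}
\|F(u(\tau))\|_{L^2} &\leq \bigl(\|u(\tau)\|_{L^\infty}+Ca\bigr)\|u(\tau)\|_{L^\infty}\|u(\tau)\|_{L^2}, \\
\||\xi|^{2\gamma}\widehat{F(u(\tau))}\|_{L^\infty(\xi^2\leq 1)} &\leq \|F(u(\tau))\|_{L^1} \leq \bigl(\|u(\tau)\|_{L^\infty}+Ca\bigr)\|u(\tau)\|_{L^2}^2.
\end{align*}
Factoring $\|u\|_{L^\infty_t L^2_x}$ out of the $\tau$-integrals and applying Hölder in $\tau$ --- pairing the remaining $\|u(\tau)\|_{L^\infty}^k$ factors ($k=1,2$) against the $L^4L^\infty$ component of $\|u\|_{Z^\gamma}$ --- yields a nonlinear estimate
\begin{equation*}
\left\|\int_1^t S(t,\tau)\tfrac{F(u(\tau))}{\tau}\,d\tau\right\|_{Z^\gamma} \leq C(a)\bigl(a+\|u\|_{Z^\gamma}\bigr)\|u\|_{Z^\gamma}^2.
\end{equation*}
The hypothesis $\gamma < 1/4$ is needed precisely to secure integrability of the power weights $\tau^{\gamma - s}$ arising from the low-frequency part of $X_\tau^\gamma$. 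The Lipschitz difference $F(u) - F(v) = O\bigl((|u|^2+|v|^2+a(|u|+|v|))|u-v|\bigr)$ is treated identically, producing a Lipschitz constant $\leq C(a)(R^2 + aR)$. For $\|u(1)\|_{X_1^\gamma}$ sufficiently small with respect to $a$, $\Phi$ is a contraction on the ball of radius $R$, yielding the unique global solution with the claimed estimate.

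The main obstacle is the time bookkeeping: one must simultaneously juggle the $\tau^{1/4}(1+\log^2\tau)$ from (3.3), the growth $(t/\tau)^{a^2}$ from (3.2), the $1/\tau$ from Duhamel, and the fact that decay of $\|u(\tau)\|_{L^\infty}$ is available only in $L^4(d\tau)$ --- not pointwise --- so that every integrand collapses to a form $\tau^{-s}$ with $s > 1$. The quadratic term, carrying only an $a$-smallness rather than an extra factor of $\|u\|$, is the most delicate; it is handled by noting that it comes with a time integral of shape $\int_1^\infty \tau^{-(a^2+1)} d\tau = 1/a^2$, so that after multiplication by $a$ the contribution is $O(1/a)$ and is absorbed into the constant $C(a)$.
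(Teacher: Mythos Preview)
Your proposal is correct and follows essentially the same strategy as the paper: a contraction argument for $\Phi$ on a small ball of $Z^\gamma$, with the linear part controlled by (3.1)--(3.3) and the Duhamel term estimated via Minkowski plus the same three linear bounds applied at the running time $t_0=\tau$. The only organizational difference is that the paper packages the nonlinear estimate into a separate lemma (Lemma~\ref{Duh23}) using the interpolated Strichartz norms $L^8L^4$, $L^6L^6$, $L^{12}L^3$ between $L^\infty L^2$ and $L^4L^\infty$, whereas you bound $\|F(u(\tau))\|_{L^2}$ and $\|F(u(\tau))\|_{L^1}$ directly by products of $\|u(\tau)\|_{L^\infty}$ and $\|u(\tau)\|_{L^2}$ and then apply H\"older in $\tau$ with the endpoint norms; the two are equivalent. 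Your identification of the quadratic Fourier-mode contribution as $a\int_1^\infty \tau^{-1-a^2}\,d\tau = 1/a$ matches the paper's computation exactly, and your observation that $\gamma<1/4$ is needed for integrability of the low-frequency weight $\tau^{\gamma-5/4}$ is the correct use of the hypothesis.
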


\begin{proof}
In view of \eqref{nonlinformula} we shall prove the propositions by doing a fixed point argument in $Z^\gamma$ for the operator 
$$\Phi(u)(t)=S(t,1)\,u(1)+\int_{1}^t S(t,\tau) \frac{iF(u(\tau))}{\tau} d\tau.$$
The estimates \eqref{estL2}, \eqref{estmod}, and \eqref{eststr} ensure us that
$$\|S(t,1)\,u(1)\|_{Z^\gamma}\leq C(a)\, \|u(1)\|_{X_1^\gamma}.$$

We start with a property that we shall use frequently in the following.
\begin{lemma}\label{Duh23}
Let $u\in Z^\gamma$ and $\alpha<\frac 12-\gamma$. Then for $1\leq t_1\leq t_2$
$$\int_{t_1}^{t_2}\tau^\alpha\,\frac{\|F(u(\tau))\|_{X_\tau^\gamma}}{\tau} d\tau\leq C\frac{\Sigma_{j\in\{1,2\}}\left(a\|u\|^2_{L^{p_j}((t_1,t_2),L^{q_j})}+\|u\|^3_{L^{p_j}((t_1,t_2),L^{q_j})}\right)}{t_1^{\frac 12-\alpha-\gamma}}\leq C\,\frac{a\|u\|_{Z^\gamma}^2+\|u\|_{Z^\gamma}^3}{t_1^{\frac 12-\alpha-\gamma}},$$
where $(p_1,q_1)=(\infty,2)$ and $(p_2,q_2)=(4,\infty)$.\end{lemma}

\begin{proof}
By definition \eqref{Xtau} of $X_\tau^\gamma$, and since $|\hat{f}(\xi)|\leq \|f\|_{L^1}$, we get
$$\int_{t_1}^{t_2}\tau^\alpha\,\frac{\|F(u(\tau))\|_{X_\tau^\gamma}}{\tau} d\tau=
\int_{t_1}^{t_2}\left(\frac{1}{\tau^\frac 14}\|F(u(\tau))\|_{L^2}+\frac{\tau^\gamma}{\sqrt{\tau}}\||\xi|^{2\gamma}\widehat{F(u(\tau))}\|_{L^\infty(\xi^2\leq 1)}\right)\frac{d\tau}{\tau^{1-\alpha}}$$
$$\leq c\, a\int_{t_1}^{t_2} \|u(\tau)\|_{L^4}^2\frac{d\tau}{\tau^{\frac 54-\alpha}}+c\,a\int_{t_1}^{t_2}\|u(\tau)\|_{L^2}^2\frac{d\tau}{\tau^{\frac32-\alpha-\gamma}}+c\,\int_{t_1}^{t_2}\|u(\tau)\|_{L^6}^3\frac{d\tau}{\tau^{\frac 54-\alpha}}+c\,\int_{t_1}^{t_2}\|u(\tau)\|_{L^3}^3\frac{d\tau}{\tau^{\frac32-\alpha-\gamma}}.$$
We apply H\"older's inequality $L^4-L^\frac 43$ in the first and the last integral, and Cauchy-Schwarz's inequality for the third one.
$$\int_{t_1}^{t_2}\tau^\alpha\,\frac{\|F(u(\tau))\|_{X_\tau^\gamma}}{\tau} d\tau\leq c\,a\,\|u\|_{L^8((t_1,t_2),L^4)}^2\left\|\frac{1}{\tau^{\frac 54-\alpha}}\right\|_{L^\frac43(t_1,t_2)}+c\,a\,\|u\|_{L^\infty((1,\infty),L^2)}^2\int_{t_1}^{t_2}\frac{d\tau}{\tau^{\frac32-\alpha-\gamma}}$$
$$+c\,\|u\|_{L^6((t_1,t_2),L^6)}^3\left\|\frac{1}{\tau^{\frac 54-\alpha}}\right\|_{L^2(t_1,t_2)}+c\,\|u\|_{L^{12}((t_1,t_2),L^3)}^3\left\|\frac{1}{\tau^{\frac 32-\alpha-\gamma}}\right\|_{L^\frac43(t_1,t_2)}.$$
The spaces $L^8L^4$, $L^6L^6$ and $L^{12}L^3$ are interpolation spaces between $L^\infty L^2$ and $L^4L^\infty$, therefore the Lemma follows.

\end{proof}

Let $u\in Z^\gamma$. 
The $L^4L^\infty$ norm of the integral in $\Phi(u)$ can be bounded by
$$\left\|a\int_{1}^t S(t,\tau) \frac{iF(u(\tau))}{\tau} d\tau\right\|_{L^4((1,\infty),L^\infty)}\leq a\int_{1}^\infty\left\| S(t,\tau) \frac{iF(u(\tau))}{\tau} \right\|_{L^4((\tau,\infty),L^\infty)}d\tau.$$
By using \eqref{eststr},
$$\left\|a\int_{1}^t S(t,\tau) \frac{iF(u(\tau))}{\tau} d\tau\right\|_{L^4((1,\infty),L^\infty)}\leq C(a)\,\int_{1}^\infty\tau^{\frac14}(1+\log^2 \tau)\,\frac{\|F(u(\tau))\|_{X_\tau^\gamma}}{\tau} d\tau,$$
so Lemma \ref{Duh23} with $\alpha=\frac 14^+$ gives us
$$\left\|a\int_{1}^t S(t,\tau) \frac{iF(u(\tau))}{\tau} d\tau\right\|_{L^4((1,\infty),L^\infty)}\leq C(a)\,(\|u\|_{Z^\gamma}^2+\|u\|_{Z^\gamma}^3).$$

Next we upper-bound the $L^\infty L^2$ norm 
$$\left\|a\int_{1}^t S(t,\tau) \frac{iF(u(\tau))}{\tau} d\tau\right\|_{L^2}\leq a\int_{1}^\infty\left\| S(t,\tau) \frac{iF(u(\tau))}{\tau} \right\|_{L^2}d\tau,$$
and by using \eqref{estL2},
$$\left\|a\int_{1}^t S(t,\tau) \frac{iF(u(\tau))}{\tau} d\tau\right\|_{L^2}\leq C(a)\,\int_{1}^t \tau^\frac 14\frac{\|F(u(\tau))\|_{X_\tau^\gamma}}{\tau} d\tau.$$
Again, Lemma \ref{Duh23} with $\alpha=\frac 14$ gives us
$$\left\|a\int_{1}^t S(t,\tau) \frac{iF(u(\tau))}{\tau} d\tau\right\|_{L^2}\leq C(a)\,(\|u\|_{Z^\gamma}^2+c\,\|u\|_{Z^\gamma}^3).$$

Finally, we compute (the contribution of the other quadratic term $|u|^2$ can be treated the same)
$$\frac{1}{t^{a^2}}\left\||\xi|^{2\gamma}\mathcal{F}\left(a\int_{1}^t S(t,\tau) \frac{iu^2\,(\tau)}{\tau^{1-ia^2}} d\tau\right)\right\|_{L^\infty(\xi^2\leq 1)}\leq \frac{a}{t^{a^2}}\int_{1}^t \left\||\xi|^{2\gamma}\mathcal{F}\left(S(t,\tau) \frac{iu^2\,(\tau)}{\tau^{1-ia^2}} \right)\right\|_{L^\infty(\xi^2\leq 1)}d\tau,$$
and by \eqref{estmod}
$$\frac{1}{t^{a^2}}\left\||\xi|^{2\gamma}\mathcal{F}\left(a\int_{1}^t S(t,\tau) \frac{iu^2\,(\tau)}{\tau^{1-ia^2}} d\tau\right)\right\|_{L^\infty(\xi^2\leq 1)}\leq \frac{Ca}{t^{a^2}}\int_{1}^t \left(\frac{t}{\tau}\right)^{a^2}\||\xi|^{2\gamma}\widehat{u^2}(\tau,\xi)\|_{L^\infty(\xi^2\leq 1)}\frac{d\tau}{\tau},$$
$$\leq Ca\,\|u\|_{L^\infty((1,\infty),L^2)}^2\int_1^\infty\frac{d\tau}{\tau^{1+a^2}}\leq \frac{C}{a}\,\|u\|_{Z^\gamma}^2.$$

Also, by \eqref{estmod} and by H\"older's inequality,
$$\frac{1}{t^{a^2}}\left\||\xi|^{2\gamma}\mathcal{F}\left(\int_{1}^t S(t,\tau) \frac{i|u|^2u\,(\tau)}{\tau} d\tau\right)\right\|_{L^\infty(\xi^2\leq 1)}\leq \frac{C}{t^{a^2}}\int_{1}^t \left\||\xi|^{2\gamma}\mathcal{F}\left(S(t,\tau) \frac{i|u|^2u\,(\tau)}{\tau} \right)\right\|_{L^\infty(\xi^2\leq 1)}d\tau$$
$$\leq \frac{C}{t^{a^2}}\int_{1}^t \left(\frac{t}{\tau}\right)^{a^2}\||\xi|^{2\gamma}\widehat{|u|^2u}(\tau,\xi)\|_{L^\infty(\xi^2\leq 1)}\frac{d\tau}{\tau}\leq C\,\int_1^\infty\|u(\tau)\|_{L^3}^3\frac{d\tau}{\tau^{1+a^2}}\leq C\,\|u\|_{L^{12}((1,\infty),L^3)}^3.$$
So we have shown that the contribution of the quadratic and cubic term is in $Z^\gamma$, 
$$\left\|\int_{1}^t S(t,\tau) \frac{iF(u(\tau))}{\tau} d\tau\right\|_{Z^\gamma}\leq C(a)\,(\|u\|_{Z^\gamma}^2+\|u\|_{Z^\gamma}^3).$$

Summarizing, we have
$$\|\Phi(u)\|_{Z^\gamma}\leq C(a)\,(\|u(1)\|_{X_1^\gamma}+\|u\|_{Z^\gamma}^2+\|u\|_{Z^\gamma}^3),$$
so for $u(1)\in X_1^\gamma$ small with respect to $a$, by the fixed point argument we get a global solution of \eqref{nonlin} $u\in Z^\gamma$ with norm bounded by
$$\|u\|_{Z^\gamma}\leq C(a)\,\|u(1)\|_{X_1^\gamma}.$$
\end{proof}

We state now the result in Sobolev spaces. This is a direct corollary of Proposition \ref{global}, by using the Leibniz rule and the fact that estimating the Fourier norm in $Z^\gamma$ on derivative terms creates powers of $\xi$ which are bounded by $1$.

\begin{corollary}\label{globalHs}
Let $s\in\mathbb N$. Let $\partial_x^ku(1)$ be a function in $X_1^\gamma$ small with respect to $a$, for all $0\leq k\leq s$. Then there exists a unique global solution $u\in Z^\gamma=Y_1^\gamma\cap L^4((1,\infty),L^\infty)$, with $\partial_x^k u\in  Z^\gamma$, of equation \eqref{nonlin} with $u(1)$ initial data at time $t=1$, and
$$\Sigma_{0\leq k\leq s}\,\|\partial_x^k u\|_{Z^\gamma}\leq C(a)\, \Sigma_{0\leq k\leq s}\,\|\partial_x^k u(1)\|_{X_1^\gamma}.$$
\end{corollary}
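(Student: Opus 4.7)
The strategy is to run the fixed-point argument of Proposition \ref{global} on the product space $W_s^\gamma = \{ u : \partial_x^k u \in Z^\gamma,\ 0 \le k \le s \}$, equipped with the norm $R(u) = \sum_{k=0}^s \|\partial_x^k u\|_{Z^\gamma}$. Since the linear part of \eqref{nonlin} has no $x$-dependent coefficients, differentiating the equation $k$ times shows that $\partial_x^k u$ solves an equation of the same type as \eqref{lin} with forcing $\partial_x^k F(u)/t$. Hence the Duhamel identity \eqref{nonlinformula} gives, for each $k$,
\[
\partial_x^k u(t) = S(t,1)\,\partial_x^k u(1) + \int_{1}^{t} S(t,\tau)\,\frac{i\,\partial_x^k F(u(\tau))}{\tau}\, d\tau,
\]
and the linear estimates \eqref{estL2}, \eqref{estmod}, \eqref{eststr} applied with $f = \partial_x^k F(u(\tau))$ are exactly as in Proposition \ref{global}. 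The only new input needed is an analog of Lemma \ref{Duh23} with $F(u)$ replaced by $\partial_x^k F(u)$.

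For the Fourier component of the $X_\tau^\gamma$ norm, no additional work is required: since $\widehat{\partial_x^k F(u)}(\xi) = (i\xi)^k \widehat{F(u)}(\xi)$ and $|\xi|^k \le 1$ on the region $\xi^2 \le 1$, we simply get
\[
\||\xi|^{2\gamma}\,\widehat{\partial_x^k F(u)}(\xi)\|_{L^\infty(\xi^2 \le 1)} \ \le\ \||\xi|^{2\gamma}\,\widehat{F(u)}(\xi)\|_{L^\infty(\xi^2 \le 1)},
\]
which is exactly the quantity already controlled in the proof of Proposition \ref{global} by a sum of $a\|u\|_{L^2}^2$ and $\|u\|_{L^3}^3$. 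For the $L^2$ component $\|\partial_x^k F(u)\|_{L^2}$, I would expand with the Leibniz rule into a finite sum of products of at most three factors $\partial_x^{k_i}u$ with $\sum_i k_i = k$, and then apply H\"older in the interpolation spaces $L^8 L^4$, $L^6 L^6$, $L^{12} L^3$ lying between $L^\infty_t L^2_x$ and $L^4_t L^\infty_x$, exactly as in the proof of Lemma \ref{Duh23}. Every such product is bounded by $R(u)^2$ or $R(u)^3$, and the time integrals $\int \tau^{-5/4+\alpha}\,d\tau$ and $\int \tau^{-3/2+\alpha+\gamma}\,d\tau$ still converge as long as $\alpha < 1/2 - \gamma$, which is the same range as in Lemma \ref{Duh23}.

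Combining these bounds with the three linear estimates for $S(t,\tau)$ on each $\partial_x^k F(u)$, and summing over $0 \le k \le s$, yields the contraction estimate
\[
R(\Phi(u)) \ \le\ C(a)\Bigl( \sum_{k=0}^s \|\partial_x^k u(1)\|_{X_1^\gamma} + a\, R(u)^2 + R(u)^3 \Bigr),
\]
so smallness of the initial data (in the product $X_1^\gamma$-norm with respect to $a$) closes the fixed-point argument and delivers the claimed bound. I do not anticipate any genuine obstacle: since $F$ has bounded polynomial degree and Sobolev-order can always be concentrated on a single factor in each Leibniz term (putting the top-order derivative in $L^\infty_t L^2_x$ and the lower-order factors in $L^4_t L^\infty_x$), the analysis reduces to bookkeeping of interpolation exponents identical to those already used in the proof of Lemma \ref{Duh23}.
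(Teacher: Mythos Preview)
Your proposal is correct and follows essentially the same approach as the paper: the paper's own proof is a single sentence invoking the Leibniz rule together with the observation that the Fourier weight $|\xi|^{2\gamma}$ applied to $\widehat{\partial_x^k F(u)}$ produces extra powers of $|\xi|$ which are bounded by $1$ on $\xi^2\le 1$. You have simply spelled out the details that the paper leaves implicit.
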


\subsection{Asymptotic completeness}\label{ss-nonlin-asc}
Now we prove the second part of Theorem \ref{theorem1}, namely the asymptotic completeness of the global solutions obtained by Proposition \ref{global}.
\begin{prop}\label{nonlinearasc}
Let $u(1)$ be a function in $X_1^\gamma$ small with respect to $a$. Then the unique global solution $u\in  Z^\gamma=Y_1^\gamma\cap L^4((1,\infty),L^\infty)$ of equation \eqref{nonlin} with $u(1)$ initial data at time $t=1$ scatters in $L^2$. 
More precisely, there exists $f_+\in L^2$ for which
\begin{equation}\label{ratenonlin}
\|u(t)-e^{i(t-1)\partial_x^2}f_+\|_{L^2}\leq \frac{C(a,\delta)}{t^{\frac14-(\gamma+\delta)}}\,\|u(1)\|_{X_1^\gamma}\underset{t\tend\infty}{\longrightarrow} 0,
\end{equation}
for any $0<\delta<1/4-\gamma$. 
\end{prop}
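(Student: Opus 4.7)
The strategy is to combine the linear scattering theory of Proposition \ref{linearasc} with the Duhamel representation \eqref{nonlinformula}, constructing $f_+$ as the sum of the linear scattering state of $u(1)$ and an $L^2$-convergent integral of scattering states of the forcing slices $h_\tau := iF(u(\tau))/\tau$. A naive attempt to show that $e^{-i(t-1)\partial_x^2}u(t)$ is Cauchy in $L^2$ fails, because the resonant term $\pm a^2\bar u / t^{1\pm 2ia^2}$ would contribute $\int_s^t \|u(\tau)\|_{L^2}/\tau\,d\tau \sim \log(t/s)$, which does not vanish. Working through the modified semigroup $S(t,\tau)$ absorbs this obstruction.

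Applying Proposition \ref{linearasc} to $u(1)\in X_1^\gamma$ yields $u_+^{(0)}\in L^2$ with
$$\|S(t,1)u(1) - e^{i(t-1)\partial_x^2} u_+^{(0)}\|_{L^2} \leq \frac{C(a,\delta)}{t^{1/4-(\gamma+\delta)}}\|u(1)\|_{X_1^\gamma}.$$
For a.e.\ $\tau \geq 1$, since $u\in Z^\gamma$ has $\|u\|_{Z^\gamma} \leq C(a)\|u(1)\|_{X_1^\gamma}$ by Proposition \ref{global}, the forcing $h_\tau$ lies in $X_\tau^\gamma$, so applying the same Proposition with initial data $h_\tau$ at time $\tau$ produces a scattering state $g_\tau \in L^2$ satisfying
$$\|S(t,\tau)h_\tau - e^{i(t-\tau)\partial_x^2} g_\tau\|_{L^2} \leq C(a,\delta)\frac{(1+\log\tau)\tau^{1/2-(\gamma+\delta)}}{t^{1/4-(\gamma+\delta)}}\|h_\tau\|_{X_\tau^\gamma}$$
together with the $L^2$ bound $\|g_\tau\|_{L^2} \leq C(a)\tau^{1/4}\|h_\tau\|_{X_\tau^\gamma}$ from \eqref{u+L2}. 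Lemma \ref{Duh23} with $\alpha=1/4 < 1/2-\gamma$ then gives
$$\int_1^\infty \|g_\tau\|_{L^2}\,d\tau \leq C(a)\int_1^\infty \tau^{1/4}\frac{\|F(u(\tau))\|_{X_\tau^\gamma}}{\tau}\,d\tau \leq C(a)\|u(1)\|_{X_1^\gamma}^2,$$
so the candidate
$$f_+ := u_+^{(0)} + \int_1^\infty e^{-i(\tau-1)\partial_x^2} g_\tau\,d\tau$$
is a well-defined element of $L^2$.

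For the rate, I would split
$$u(t) - e^{i(t-1)\partial_x^2}f_+ = \bigl[S(t,1)u(1) - e^{i(t-1)\partial_x^2}u_+^{(0)}\bigr] + \int_1^t \bigl[S(t,\tau)h_\tau - e^{i(t-\tau)\partial_x^2} g_\tau\bigr] d\tau - \int_t^\infty e^{i(t-\tau)\partial_x^2} g_\tau\,d\tau.$$
The first bracket is bounded directly by the linear scattering rate. The tail $\int_t^\infty \|g_\tau\|_{L^2}\,d\tau$ is controlled by the tail version of Lemma \ref{Duh23} with $\alpha=1/4$, giving $t^{-(1/4-\gamma)}$, which is even better than required. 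The interior Duhamel integral is bounded by
$$\frac{C(a,\delta)}{t^{1/4-(\gamma+\delta)}}\int_1^\infty (1+\log\tau)\,\tau^{1/2-(\gamma+\delta)}\,\frac{\|F(u(\tau))\|_{X_\tau^\gamma}}{\tau}\,d\tau,$$
and absorbing the $(1+\log\tau)$ factor into $\tau^{\delta/2}$ lets us apply Lemma \ref{Duh23} at $\alpha = 1/2-\gamma-\delta/2 < 1/2-\gamma$, yielding a finite integral bounded by $C(a,\delta)\|u(1)\|_{X_1^\gamma}^2$. Summing the three contributions gives the claimed rate.

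The main technical obstacle is the tightness of the exponents: the linear scattering estimate from Proposition \ref{linearasc} grows like $\tau^{1/2-(\gamma+\delta)}$ in the initial time, and Lemma \ref{Duh23} requires $\alpha < 1/2-\gamma$, so the two conditions match exactly and force the arbitrarily small loss $\delta>0$ in the final exponent. The subcriticality $\gamma<1/4$ is essential here, as it is precisely what gives enough room to absorb both the logarithmic factor and the $\delta$-loss while still producing positive time decay $t^{-(1/4-(\gamma+\delta))}$.
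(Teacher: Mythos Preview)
Your proof is correct and follows essentially the same approach as the paper: defining $f_+$ as the linear scattering state of $u(1)$ plus the integral of the scattering states of the forcing slices, splitting $u(t)-e^{i(t-1)\partial_x^2}f_+$ into the same three pieces, and controlling each via Proposition \ref{linearasc}, \eqref{u+L2}, and Lemma \ref{Duh23} with the same choices of $\alpha$. The only cosmetic difference is that the paper pulls the factor $(1+\log\tau)\leq (1+\log t)$ outside the interior integral and then absorbs it at the end by working with an auxiliary $\tilde\delta<\delta$, whereas you absorb $(1+\log\tau)$ into $\tau^{\delta/2}$ before applying Lemma \ref{Duh23}; both are equivalent.
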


\begin{proof}
The nonlinear solution writes
$$u(t)=S(t,1)\,u(1)+\int_{1}^t S(t,\tau) \frac{iF(u(\tau))}{\tau} d\tau.$$
The scattering result of Proposition \ref{linearasc} guarantees the existence of $u_+\in L^2$ such that
$$\|S(t,1)\,u(1)-e^{i(t-1)\partial_x^2}u_+\|_{L^2}\leq \frac {C(a,
\tilde\delta)}{t^{\frac14-(\gamma+\tilde\delta)}}\,\|u(1)\|_{X_1^\gamma},$$
for some $\tilde\delta$ to be chosen latter. 
Since $u\in Z^\gamma$ then a.e. $F(u(\tau))\in X_{\tau}^\gamma$ and we can apply again Proposition \ref{linearasc}. There exists $u_+(\tau)\in L^2$ such that
$$\|S(t,\tau) iF(u(\tau))-e^{i(t-\tau)\partial_x^2}iu_+(\tau)\|_{L^2}\leq C(a,\tilde\delta)\,\frac {(1+\log\tau)\tau^{\frac 12-(\gamma+\tilde\delta)}}{t^{\frac14-(\gamma+\tilde\delta)}}\,\|F(u(\tau))\|_{X_\tau^\gamma}.$$
In view of \eqref{Duhamelu+} the expression of $u_+(\tau)$ is
\begin{equation}\label{Duhamelutau+}
u_+(\tau)=F(u(\tau))+a^2\int_{\tau}^\infty e^{-is\partial_x^2}\frac{\overline{S(s,\tau)iF(u(\tau))}}{s^{1+ 2ia^2}}ds.
\end{equation}

We define
\begin{equation}\label{Duhamelf+}
f_+=u_++i\int_1^\infty e^{-i(\tau-1)\partial_x^2}u_+(\tau) \frac{d\tau}{\tau}
\end{equation}
and we have
$$u(t)-e^{i(t-1)\partial_x^2}f_+=S(t,1)\,u(1)-e^{i(t-1)\partial_x^2}u_++\int_1^t S(t,\tau) iF(u(\tau))\frac{d\tau}{\tau}-i\int_1^\infty e^{i(t-\tau)\partial_x^2}u_+(\tau)\frac{d\tau}{\tau}$$
$$=S(t,1)\,u(1)-e^{i(t-1)\partial_x^2}u_++\int_1^t \left(S(t,\tau) iF(u(\tau))-e^{i(t-\tau)\partial_x^2}iu_+(\tau)\right)\frac{d\tau}{\tau}-i\int_t^\infty e^{i(t-\tau)\partial_x^2}u_+(\tau)\frac{d\tau}{\tau}.$$
The first term has the right decay in $L^2$, and the second is upper-bounded by
$$\left\|\int_1^t \left(S(t,\tau) iF(u(\tau))-e^{i(t-\tau)\partial_x^2}iu_+(\tau)\right)\frac{d\tau}{\tau}\right\|_{L^2}\leq C(a,\tilde\delta)\int_1^t \frac {(1+\log\tau)\tau^{\frac 12-(\gamma+\tilde\delta)}}{t^{\frac14-(\gamma+\tilde\delta)}}\,\|F(u(\tau))\|_{X_\tau^\gamma}\frac{d\tau}{\tau},$$
so we can use Lemma \ref{Duh23} with $\alpha=\frac 12-(\gamma+\tilde\delta)$,
$$\left\|\int_1^t \left(S(t,\tau) iF(u(\tau))-e^{i(t-\tau)\partial_x^2}iu_+(\tau)\right)\frac{d\tau}{\tau}\right\|_{L^2}\leq C(a,\tilde\delta)\frac{1+\log t}{t^{\frac14-(\gamma+\tilde\delta)}}\,(\|u\|_{Z^\gamma}^2+\|u\|_{Z^\gamma}^3).$$
For the last term we use \eqref{u+L2}
$$\left\|\int_t^\infty e^{i(t-\tau)\partial_x^2}u_+(\tau)\frac{d\tau}{\tau}\right\|_{L^2}\leq \int_t^\infty \left\|u_+(\tau)\right\|_{L^2}\frac{d\tau}{\tau}\leq C(a)\int_t^\infty \tau^\frac 14\left\|F(u(\tau))\right\|_{X_\tau^\gamma}\frac{d\tau}{\tau},$$
and again Lemma \ref{Duh23} with $\alpha=\frac 14$
$$\left\|\int_t^\infty e^{i(t-\tau)\partial_x^2}u_+(\tau)\frac{d\tau}{\tau}\right\|_{L^2}\leq C(a)\frac{\|u\|_{Z^\gamma}^2+\|u\|_{Z^\gamma}^3}{t^{\frac14-\gamma}}.$$

In conclusion we have
$$\|u(t)-e^{i(t-1)\partial_x^2}f_+\|_{L^2}\leq C(a,\tilde\delta)\frac{1+\log t}{t^{\frac14-(\gamma+\tilde\delta)}}\,(\|u(1)\|_{X_1^\gamma}+\|u\|_{Z^\gamma}^2+\|u\|_{Z^\gamma}^3)$$
$$\leq C(a,\tilde\delta)\frac{1+\log t}{t^{\frac14-(\gamma+\tilde\delta)}}\,(\|u(1)\|_{X_1^\gamma}+\|u(1)\|_{X_1^\gamma}^2+\|u(1)\|_{X_1^\gamma}^3)\leq C(a,\tilde\delta)\frac{1+\log t}{t^{\frac14-(\gamma+\tilde\delta)}}\,\|u(1)\|_{X_1^\gamma},$$
and the Proposition follows by choosing $0<\tilde\delta<\delta<1/4-\gamma$.

\end{proof}

Similarly we get also the statement for Sobolev spaces.
\begin{corollary}\label{nonlinearascHs}
Let $s\in\mathbb N$. Let $u(1)$ be a function in $X_{1}^\gamma$ such that $\partial_x^k u(1)\in X_{1}^\gamma$ for all $0\leq k\leq s$, small with respect to $a$. Then the unique global solution $u\in Y_{1}^\gamma$ of equation \eqref{lin} with $u(1)$ initial data at time $t=1$, with $\partial_x^k u\in Y_{1}^\gamma$ for all $0\leq k\leq s$, scatters in $H^s$. 
More precisely there exists $f_+\in H^s$ such that
\begin{equation}\label{ratenonlinHs}
\|u(t)-e^{i(t-1)\partial_x^2}f_+\|_{H^s}\leq  \frac{C(a,\delta)}{t^{\frac14-(\gamma+\delta)}}\,\Sigma_{0\leq k\leq s}\,\|\partial_x^ku(1)\|_{X_1^\gamma}\underset{t\tend\infty}{\longrightarrow} 0,
\end{equation}
for any $0<\delta<1/4-\gamma$.
\end{corollary}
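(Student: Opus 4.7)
The plan is to mimic the proof of Proposition~\ref{nonlinearasc} applied to each $\partial_x^k u$ for $0 \leq k \leq s$, combining the global Sobolev bounds provided by Corollary~\ref{globalHs} with the linear Sobolev scattering of Corollary~\ref{linearascHs}. Since $S(t,\tau)$ commutes with $\partial_x$ and equation \eqref{nonlin} is translation-invariant, applying $\partial_x^k$ to the Duhamel representation \eqref{nonlinformula} gives
\begin{equation*}
\partial_x^k u(t) = S(t,1)\,\partial_x^k u(1) + \int_{1}^{t} S(t,\tau)\, \frac{i\,\partial_x^k F(u(\tau))}{\tau}\, d\tau .
\end{equation*}

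The key auxiliary step is a Sobolev variant of Lemma~\ref{Duh23}: for $\alpha < \frac{1}{2}-\gamma$ and $0\leq k\leq s$,
\begin{equation*}
\int_{t_1}^{t_2} \tau^\alpha\, \frac{\|\partial_x^k F(u(\tau))\|_{X_\tau^\gamma}}{\tau}\, d\tau \leq \frac{C(a)}{t_1^{\frac{1}{2}-\alpha-\gamma}}\, \Sigma_{j\leq k}\left(\|\partial_x^j u\|_{Z^\gamma}^2 + \|\partial_x^j u\|_{Z^\gamma}^3\right).
\end{equation*}
Since $F$ is the sum of a quadratic and a cubic polynomial in $(u,\bar u)$, the Leibniz rule expresses $\partial_x^k F(u)$ as a finite sum of products of at most three factors $\partial_x^{j_i} u$ with $\Sigma j_i\leq k$. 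Following Lemma~\ref{Duh23}, I would place one of these factors in $L^4((t_1,t_2),L^\infty)$ via the $L^4 L^\infty$ component of $Z^\gamma$ and the remaining ones in $L^\infty((t_1,t_2),L^2)$ via the $L^\infty L^2$ component, and use H\"older in both the $L^2$ and $L^\infty$ norms on the Fourier side (where the Fourier part is controlled through $\||\xi|^{2\gamma}\widehat{\partial_x^k F}\|_{L^\infty}\leq\|\partial_x^k F\|_{L^1}$).

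Following \eqref{Duhamelf+}, I would then define
\begin{equation*}
f_+ := u_+^{(0)} + i\int_{1}^{\infty} e^{-i(\tau-1)\partial_x^2}\, u_+(\tau)\, \frac{d\tau}{\tau},
\end{equation*}
where $u_+^{(0)}\in H^s$ is the asymptotic state of $S(t,1)u(1)$ given by Corollary~\ref{linearascHs} and $u_+(\tau)$ is attached by Corollary~\ref{linearascHs} to the source $iF(u(\tau))\in X_\tau^\gamma$. Corollary~\ref{linearascHs} together with the Sobolev Lemma~\ref{Duh23} ensures $\partial_x^k f_+ \in L^2$ for $0\leq k\leq s$, and that $\partial_x^k f_+$ is the analogous object built from $\partial_x^k u(1)$ and $\partial_x^k F(u(\tau))$. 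Decomposing $\partial_x^k u(t)-e^{i(t-1)\partial_x^2}\partial_x^k f_+$ into three pieces --- the linear scattering error, the Duhamel difference $S(t,\tau)-e^{i(t-\tau)\partial_x^2}$ integrated over $(1,t)$, and the tail integral over $(t,\infty)$ --- and estimating them respectively with the $\dot H^k$ rate \eqref{rateHs} of Corollary~\ref{linearascHs}, the Sobolev Lemma~\ref{Duh23} with $\alpha=\frac{1}{2}-(\gamma+\tilde\delta)$, and the Sobolev analogue of \eqref{u+L2} with $\alpha=\frac{1}{4}$, gives a bound of order $(1+\log t)\,t^{-(\frac{1}{4}-(\gamma+\tilde\delta))}$; choosing $0<\tilde\delta<\delta<\frac14-\gamma$ absorbs the logarithm.

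The only genuine obstacle is the Sobolev version of Lemma~\ref{Duh23}; once it is established, the three-piece decomposition of Proposition~\ref{nonlinearasc} carries over verbatim, derivative by derivative. The smallness of $\Sigma_{j\leq s}\|\partial_x^j u(1)\|_{X_1^\gamma}$ with respect to $a$ enters only through Corollary~\ref{globalHs}, which supplies the a priori bound $\|\partial_x^j u\|_{Z^\gamma}\leq C(a)\,\|\partial_x^j u(1)\|_{X_1^\gamma}$ needed to dominate the quadratic and cubic contributions of the nonlinearity by their linear norm; summing over $0\leq k\leq s$ then yields \eqref{ratenonlinHs}.
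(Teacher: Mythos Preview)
Your proposal is correct and follows exactly the route the paper intends: the paper simply says ``Similarly we get also the statement for Sobolev spaces'' and relies on the Leibniz-rule argument already flagged after Corollary~\ref{globalHs}, together with the commutation of $\partial_x$ with $S(t,\tau)$ and the $\dot H^k$ versions of Corollary~\ref{linearascHs} and \eqref{u+L2}. Your three-piece decomposition and the Sobolev variant of Lemma~\ref{Duh23} are precisely the details the paper is suppressing.
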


\subsection{Regularity of the asymptotic state}\label{ss-nonlin-aposteriori}
As an extra-information on $f_+$, we have the following result, in the spirit of \eqref{fourierlin}. This completes the proof of Theorem \ref{theorem1}.

\begin{prop}\label{estf+}
If $\|u(1)\|_{X_1^\gamma}$ is small enough with respect to $a$, the function $f_+$ satisfies for all $\xi^2\leq 1$ and $0<\delta<1/4-\gamma$
$$|\xi|^{2(\gamma+\delta)}|\hat{f_+}(\xi)|\leq C(a,\delta)\|u(1)\|_{X_1^\gamma}.$$
\end{prop}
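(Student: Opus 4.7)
The plan is to start from the Fourier representation of $f_+$ used in the proof of Proposition \ref{nonlinearasc}. Combining the definition \eqref{Duhamelf+} with the Fourier transform, one gets
\[
\hat{f_+}(\xi)=\hat{u_+}(\xi)+i\int_1^\infty e^{i(\tau-1)\xi^2}\,\hat{u_+}(\tau)(\xi)\,\frac{d\tau}{\tau},
\]
where $u_+$ is the linear scattering state of $S(\cdot,1)u(1)$, and, for each $\tau\geq 1$, $u_+(\tau)$ is the scattering state of the linear evolution $S(\cdot,\tau)\,F(u(\tau))$ issued at time $\tau$. For the first summand, I would invoke directly the pointwise Fourier estimate \eqref{fourierlin} for the linear problem at $t_0=1$, which yields $|\xi|^{2(\gamma+\delta)}|\hat{u_+}(\xi)|\leq C(a,\delta)\,\|u(1)\|_{X_1^\gamma}$.

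For the integral, the idea is to apply Lemma \ref{u+} to each $u_+(\tau)$, with an auxiliary parameter $\delta'\in(0,\delta)$, obtaining
\[
|\hat{u_+}(\tau)(\xi)|\leq\Bigl(C(a)+C(a,\delta')\,\tfrac{1+|\log|\xi||}{(\xi^2\tau)^{\delta'}}\Bigr)\bigl(|\hat{F(u(\tau))}(\xi)|+|\hat{F(u(\tau))}(-\xi)|\bigr).
\]
Together with the pointwise control $|\hat{F(u(\tau))}(\xi)|\leq\tfrac{\sqrt{\tau}}{\tau^\gamma|\xi|^{2\gamma}}\,\|F(u(\tau))\|_{X_\tau^\gamma}$ for $\xi^2\leq 1$ (which is just the definition of $X_\tau^\gamma$), multiplication by $|\xi|^{2(\gamma+\delta)}$, and the elementary bound $(1+|\log|\xi||)|\xi|^{2(\delta-\delta')}\leq C(\delta,\delta')$ for $|\xi|\leq 1$, this gives a pointwise estimate of the form
\[
|\xi|^{2(\gamma+\delta)}|\hat{u_+}(\tau)(\xi)|\leq C(a,\delta)\bigl(|\xi|^{2\delta}\tau^{1/2-\gamma}+\tau^{1/2-\gamma-\delta'}\bigr)\|F(u(\tau))\|_{X_\tau^\gamma}.
\]

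Next I would split the $\tau$-integration at $\tau=1/\xi^2$. On $[1,1/\xi^2]$ the inequality $\tau^{\delta'}\leq|\xi|^{-2\delta'}$ turns the first term into one bounded by the second, so both contributions are dominated by $C(a,\delta)\,\tau^{1/2-\gamma-\delta'}\|F(u(\tau))\|_{X_\tau^\gamma}$; Lemma \ref{Duh23} with $\alpha=1/2-\gamma-\delta'<1/2-\gamma$ then estimates the $\tau$-integral by $C(a)(a\|u\|_{Z^\gamma}^2+\|u\|_{Z^\gamma}^3)$, uniformly in $\xi$. On $[1/\xi^2,\infty)$ the same Lemma \ref{Duh23} applied with $t_1=1/\xi^2$ produces an extra factor $|\xi|^{2\delta'}$ that is exactly what one needs to tame the first term; the remaining subtlety concerns the oscillation $e^{i(\tau-1)\xi^2}$, which is exploited through an integration by parts in $\tau$ analogous to the one in the proof of Lemma \ref{IBP}, where the non-resonant phase of the quadratic part of $F$ (carrying factors $\tau^{\pm ia^2}$) is essential for the boundary and remainder terms to combine into integrable quantities.

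Putting the two pieces together, the integral contribution is bounded by $C(a,\delta)(a\|u\|_{Z^\gamma}^2+\|u\|_{Z^\gamma}^3)$, and using $\|u\|_{Z^\gamma}\leq C(a)\|u(1)\|_{X_1^\gamma}$ from Proposition \ref{global} with the smallness hypothesis on $\|u(1)\|_{X_1^\gamma}$ one concludes $|\xi|^{2(\gamma+\delta)}|\hat{f_+}(\xi)|\leq C(a,\delta)\|u(1)\|_{X_1^\gamma}$. The main obstacle is precisely the large-$\tau$ region $\tau>1/\xi^2$, where the favourable $(\xi^2\tau)^{-\delta'}$ decay of Lemma \ref{u+} is no longer available and the naïve integration of $|\hat{F(u(\tau))}|/\tau$ is only logarithmically divergent; the splitting at $\tau=1/\xi^2$, the choice $\delta'<\delta$, and the oscillatory integration by parts are designed exactly to close this gap.
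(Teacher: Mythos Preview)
Your treatment of the first term $\hat u_+$ and of the integral over $[1,1/\xi^2]$ is essentially correct and matches the paper. The genuine gap is in the region $\tau>1/\xi^2$, for the \emph{quadratic} part of $F(u)$. There the pointwise bound from Lemma~\ref{u+} reduces to $C(a)\bigl(|\widehat{F(u(\tau))}(\xi)|+|\widehat{F(u(\tau))}(-\xi)|\bigr)$, and the quadratic contribution $a|\widehat{u^2}(\tau,\xi)|+2a|\widehat{|u|^2}(\tau,\xi)|\le Ca\|u(\tau)\|_{L^2}^2$ is merely bounded in $\tau$, so $\int_{1/\xi^2}^\infty\frac{d\tau}{\tau}$ diverges. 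Your proposed remedy, an integration by parts in $\tau$ ``analogous to Lemma~\ref{IBP}'', does not close: the mechanism in Lemma~\ref{IBP} works because $\partial_\tau\overline{\hat u(\tau,-\xi)}=+i\xi^2\overline{\hat u(\tau,-\xi)}+\ldots$, which after IBP reproduces $-A$ and yields $2A=$ bounded terms. For $\widehat{u^2}$ one has instead $\partial_\tau\widehat{u^2}(\tau,\xi)=-i\xi^2\widehat{u^2}(\tau,\xi)+2i\!\int\!\eta(\xi-\eta)\hat u(\eta)\hat u(\xi-\eta)\,d\eta+\ldots$, so the self-similar term comes back with the \emph{same} sign and the argument collapses to $0=$ boundary $+$ remainder, which is vacuous. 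The residual term $\int\eta(\xi-\eta)\hat u\hat u$ is essentially $\widehat{(\partial_xu)^2}$ and is not controlled by $Z^\gamma$ at the regularity assumed. The same obstruction occurs for $|u|^2$. In phase language, $u^2$ and $|u|^2$ are \emph{resonant} with $e^{i\tau\xi^2}$ (the combined phase $2\eta(\xi-\eta)$ or $2\xi(\xi-\eta)$ vanishes on a nontrivial set), unlike the non-resonant $\bar u$ in Lemma~\ref{IBP}.

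The paper handles this region by a different idea: it replaces $u(\tau)$ in the quadratic terms by $e^{i(\tau-1)\partial_x^2}f_+$ (at the price of the scattering error from Proposition~\ref{nonlinearasc}), which makes the oscillatory phase in $\tau$ explicit and allows a clean stationary-phase/IBP in $\tau$, see \eqref{estasstate}--\eqref{estasstatebis}. This produces a \emph{self-referential} inequality, with $\hat f_+$ on both sides, and the paper then closes it by a maximal-function bootstrap (the operator $M$ and the auxiliary lemma on $I_r(\xi)$). Your outline, which aims to bound everything directly by $\|u\|_{Z^\gamma}$, does not reach this structure; to complete the argument you need either this substitution-and-bootstrap step, or an alternative source of decay for the quadratic piece on $\tau>1/\xi^2$ that does not rely on the failed IBP.
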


\begin{proof}
By definition \eqref{Duhamelf+} of $f_+$, we have
$$|\xi|^{2(\gamma+\delta)}\hat{f_+}(\xi)=|\xi|^{2(\gamma+\delta)}\left(\hat{u_+}(\xi)+i\int_1^{1/\xi^2} e^{i(\tau-1)\xi^2}\hat{u_+}(\tau,\xi) \frac{d\tau}{\tau}+i\int_{1/\xi^2}^\infty e^{i(\tau-1)\xi^2}\hat{u_+}(\tau,\xi) \frac{d\tau}{\tau}\right),$$
so on $\xi^2\leq 1$ the estimate \eqref{fourierlin} insures us that the first term is upper-bounded by $C(a,\delta)\|u(1)\|_{X_1^\gamma}$.

By Lemma \ref{u+}  and \eqref{Duhamelutau+} we can treat the first integral
$$\int_1^{1/\xi^2}|\xi|^{2(\gamma+\delta)}|\hat{u_+}(\tau,\xi)| \frac{d\tau}{\tau}\leq\int_1^{1/\xi^2} \frac{C(a,\delta)}{\tau^{\delta}}|\xi|^{2\gamma}(1+|\log|\xi||)\left(|\widehat{F(u(\tau))}(\xi)|+|\widehat{F(u(\tau))}(-\xi)|\right)\frac{d\tau}{\tau} $$
$$\leq C(a,\delta)\,\int_1^{1/\xi^2} \left(\|u(\tau)\|_{L^2}^2+\|u(\tau)\|_{L^3}^3\right) \frac{1+\log\tau}{\tau^{1+\gamma+\delta}}d\tau.$$
As usual, we use H\"older's inequality for the second term, and get
$$\int_1^{1/\xi^2}|\xi|^{2(\gamma+\delta)}|\hat{u_+}(\tau,\xi)| \frac{d\tau}{\tau}\leq C(a,\delta)\,\|u\|_{L^\infty((1,\infty), L^2)}^2\int_1^{1/\xi^2}\frac{1+\log\tau}{\tau^{1+\gamma+\delta}}d\tau$$
$$+C(a,\delta)\,\|u\|_{L^{12}((1,\infty), L^3)}^3 \left\|\frac{1+\log\tau}{\tau^{1+\gamma+\delta}}\right\|_{L^\frac 43(1,{1/\xi^2})}\leq C(a,\delta)\,(\|u\|_{Z^\gamma}^2+\|u\|_{Z^\gamma}^3)\leq C(a,\delta)\|u(1)\|_{X_1^\gamma}.$$

Remains to estimate the last integral which, in view of \eqref{Duhamelutau+}, is
\begin{equation}\label{remaining}
|\xi|^{2(\gamma+\delta)}\int_{1/\xi^2}^\infty e^{i(\tau-1)\xi^2}\hat{u_+}(\tau,\xi) \frac{d\tau}{\tau}
\end{equation}
$$=|\xi|^{2(\gamma+\delta)}e^{-i\xi^2}\int_{1/\xi^2}^\infty e^{i\tau\xi^2}\left(\widehat{F(u(\tau))}(\xi)+a^2\int_\tau^\infty e^{is\xi^2}\frac{\widehat{\overline{S(s,\tau)iF(u(\tau))}}(\xi)}{s^{1+ 2ia^2}}ds\right) \frac{d\tau}{\tau}$$$$=I_1(\xi)+I_2(\xi)+I_3(\xi),$$
where we denote by $I_1$ the cubic contributions of $\widehat{F(u(\tau))}$, by $I_2$ the quadratic ones, and by $I_3$ the double integral. By Lemma \ref{IBP} applied for some $0<\tilde\delta$, since $\xi^2\leq 1$,
$$|I_3(\xi)|\leq |\xi|^{2(\gamma+\delta)}\int_{1/\xi^2}^\infty C(a)\frac{|\widehat{F(u(\tau))}(\xi)|+|\widehat{F(u(\tau))}(-\xi)|}{\xi^2\tau} \frac{d\tau}{\tau}$$
$$\leq C(a)\frac{|\xi|^{2(\gamma+\delta)}}{\xi^{2}}\int_{1/\xi^2}^\infty \left(\|u(\tau)\|_{L^2}^2+\|u(\tau)\|_{L^3}^3\right)  \frac{d\tau}{\tau^{2}}$$
$$\leq C(a)\frac{|\xi|^{2(\gamma+\delta)}}{\xi^{2}}\left(\|u\|_{L^\infty((1,\infty), L^2)}^2\int_{1/\xi^2}^\infty \frac{d\tau}{\tau^{2}}+\|u\|_{L^{12}((1,\infty), L^3)}^3\left\|\frac{1}{\tau^{2}}\right\|_{L^\frac 43({1/\xi^2},\infty)}\right)\leq C(a)\|u(1)\|_{X_1^\gamma}$$
We have on $\xi^2\leq 1$
$$|I_1(\xi)|\leq\int_{1}^\infty \|u(\tau)\|_{L^3}^3\frac{d\tau}{\tau}\leq C\,\|u\|^3_{L^{12}((1,\infty),L^3)}\leq C(a)\|u\|^3_{Z^\gamma}\leq C(a)\|u(1)\|_{X_1^\gamma}.$$
For the quadratic terms $I_2$ we first notice that quadratic powers of $u(\tau)$ can be replaced by the quadratic powers of $e^{i(\tau-1)\partial_x^2}f_+$, because, in view of Proposition \ref{nonlinearasc},
$$a\left|\int_{t_1}^{t_2} e^{i\tau\xi^2}\left(\mathcal{F}u^2(\tau,\xi)-\mathcal{F}(e^{i(\tau-1)\partial_x^2}f_+)^2(\xi)\right)\frac{d\tau}{\tau^{1-ia^2}}\right|$$
$$\leq a\int_{t_1}^{t_2} \|u^2(\tau)-(e^{i(\tau-1)\partial_x^2}f_+)^2\|_{L^1}\frac{d\tau}{\tau}\leq \frac{C(a,\delta)}{t_1^{\frac 14-(\gamma+\delta)}}\|u(1)\|_{X_1^\gamma}.$$

Therefore we have obtained for $\xi^2\leq 1$
$$|\xi|^{2(\gamma+\delta)}|\hat{f_+}(\xi)|\leq C(a,\delta)\|u(1)\|_{X_1^\gamma}$$$$+a|\xi|^{2(\gamma+\delta)}\left|\int_{1/\xi^2}^\infty e^{i\tau\xi^2}\,\frac{\mathcal{F}(e^{i(\tau-1)\partial_x^2}f_+)^2(\xi)+2\mathcal{F}|e^{i(\tau-1)\partial_x^2}f_+|^2(\xi)}{\tau^{1-ia^2}}d\tau\right|.$$
By writing explicitly the Fourier transforms, we get
\begin{equation}\label{estasstate}
|\xi|^{2(\gamma+\delta)}|\hat{f_+}(\xi)|\leq C(a,\delta)\|u(1)\|_{X_1^\gamma}+\Sigma_{j\in\{1,2\}}a|\xi|^{2(\gamma+\delta)}\int |\hat{f}_+(\eta)||\hat{f}_+(\xi-\eta)|\,\left|\int_{1/\xi^2}^\infty e^{i\tau h_j(\xi,\eta)}\frac{d\tau}{\tau^{1-ia^2}}\right|\,d\eta,
\end{equation}
where
$$h_1(\xi,\eta)=2\eta(\xi-\eta) \quad,\quad h_2(\xi,\eta)=2\xi(\xi-\eta).$$
By integrating by parts, for $\eta\neq\xi$, $\eta\neq 0$,
$$\int_{1/\xi^2}^\infty e^{i\tau\,h_j(\xi,\eta)}\,\frac{d\tau}{\tau^{1-ia^2}}=\left.\frac{e^{i\tau\,h_j(\xi,\eta)}}{ih_j(\xi,\eta)\tau^{1-ia^2}}\right|_{1/\xi^2}^\infty+(1-ia^2)\int_{1/\xi^2}^\infty \frac{e^{i\tau\,h_j(\xi,\eta)}}{ih_j(\xi,\eta)}\,\frac{d\tau}{\tau^{2-ia^2}}.$$
On one hand if $|h_j(\xi,\eta)|\geq c\,\xi^2$ for some positive constant $c$, we get the uniform estimate
$$\left|\int_{1/\xi^2}^\infty e^{i\tau\,h_j(\xi,\eta)}\,\frac{d\tau}{\tau^{1-ia^2}}\right|\leq C(a).$$ 
On the other hand, in the region $|h_j(\xi,\eta)|\leq c\,\xi^2$, the integral from $\frac{1}{\xi^2 |h_j(\xi,\eta)|}$ to infinity can be treated the same way. Finally, since
$$\left|\int_{1/\xi^2}^{\frac{1}{\xi^2\,|h_j(\xi,\eta)|}} e^{i\tau\,h_j(\xi,\eta)}\,\frac{d\tau}{\tau^{1-ia^2}}\right|\leq |\log | h_j(\xi,\eta)||,$$
we get
$$\left|\int_{1/\xi^2}^\infty e^{i\tau\,h_j(\xi,\eta)}\,\frac{d\tau}{\tau^{1-ia^2}}\right|\leq C(a)+ |\log | h_j(\xi,\eta)||\,\mathbb I_{|h_j(\xi,\eta)|\leq c|\xi|^2}.$$ 

Summarizing, we have obtained
\begin{equation}\label{estasstatebis}
|\xi|^{2(\gamma+\delta)}|\hat{f_+}(\xi)|\leq C(a,\delta)\|u(1)\|_{X_1^\gamma}+(C(a)+|\log|\xi||)|\xi|^{2(\gamma+\delta)}\int |\hat{f}_+(\eta)||\hat{f}_+(\xi-\eta)|\,d\eta
\end{equation}
$$+2a|\xi|^{2(\gamma+\delta)}\int_{|\eta|<C} |\hat{f}_+(\eta)||\hat{f}_+(\xi-\eta)|\,|\log |\eta||\,d\eta.$$
We have also used here the fact that since $|\xi|<1$ then both $|\eta|$ and $|\eta-\xi|$ are bounded in the regions $|h_j(\xi,\eta)|\leq c|\xi|^2$.

The function $f_+$ is in $L^2$ with norm bounded by $\|u(1)\|_{X_1^\gamma}$, so Cauchy-Schwarz's inequa-\\lity yields
\begin{equation}\label{estasstateter}
|\xi|^{2(\gamma+\delta)}|\hat{f_+}(\xi)|\leq C(a,\delta)\|u(1)\|_{X_1^\gamma}
\end{equation}
$$+C(a)|\xi|^{2(\gamma+\delta)}\int_{|\eta|<C}\ |\hat{f}_+(\eta)||\hat{f}_+(\xi-\eta)|\, |\log |\eta||\,d\eta.$$
On the region $\{|\eta|<C\}\cap \{\frac{|\xi|}{2}<|\eta|\}$ we can upper-bound $|\log |\eta||\leq |\log\frac{|\xi|}{2}|$ and treat this term as before, to finally get
\begin{equation}\label{estasstatefinal}
|\xi|^{2(\gamma+\delta)}|\hat{f_+}(\xi)|\leq C(a,\delta)\|u(1)\|_{X_1^\gamma}+C(a)|\xi|^{2(\gamma+\delta)}\int_{|\eta|\leq\frac{|\xi|}{2}}\ |\hat{f}_+(\eta)||\hat{f}_+(\xi-\eta)|\, |\log |\eta||\,d\eta.\end{equation}
By Cauchy-Schwarz's inequality we obtain
$$|\hat{f_+}(\xi)|^2\leq C(a,\delta)\frac{1}{|\xi|^{4(\gamma+\delta)}}\|u(1)\|^2_{X_1^\gamma}+C(a)\|u(1)\|_{X_1^\gamma}\int_{|\eta|\leq\frac{|\xi|}{2}}\ |\hat{f}_+(\xi-\eta)|^2\, \log^2 |\eta|\,d\eta.$$
For $0<r<|\xi|$ we get then
$$\left(|\hat{f_+}|^2\star \frac{1}{2r}\mathbb I_{[-r,r]}\right)(\xi)\leq \|u(1)\|_{X_1^\gamma}\left(\frac{C(a,\delta)}{2r}\int_{|\xi'|\leq r}\frac{d\xi'}{|\xi-\xi'|^{4(\gamma+\delta)}}+C(a)\,I_r(\xi)\right),$$
where
$$I_r(\xi)=\frac{1}{2r}\int_{|\xi'|\leq r} \int_{|\eta|\leq\frac{|\xi-\xi'|}{2}}\ |\hat{f}_+(\xi-\xi'-\eta)|^2\, \log^2 |\eta|\,d\eta\,d\xi'.$$
Since $\delta<\frac 14-\gamma$, we have
$$\frac{1}{2r}\int_{|\xi'|\leq r}\frac{d\xi'}{|\xi-\xi'|^{4(\gamma+\delta)}}\leq C\,\frac{|\xi+r|^{1-4(\gamma+\delta)}-|\xi-r|^{1-4(\gamma+\delta)}}{2r}.$$
For  $\frac{|\xi|}{2}<r<|\xi|$ we get immediately the upper-bound $\frac{C}{|\xi|^{4\delta}}$, while for $0<r<\frac{|\xi|}{2}$ we get the same upper-bound by noticing that $\frac{|\xi|}{2}<|\xi-\xi'|<\frac {3|\xi|}{2}$. As a consequence, for $0<r<|\xi|$ and $\xi^2\leq 1$, 
\begin{equation}\label{conv}
\left(|\hat{f_+}|^2\star \frac{1}{2r}\mathbb I_{[-r,r]}\right)(\xi)\leq \|u(1)\|_{X_1^\gamma}\left(\frac{C(a,\delta)}{|\xi|^{4(\gamma+\delta)}}+C(a)\,I_r(\xi)\right).
\end{equation}

We define for $\xi\neq 0$ and functions $h\in L^1$
$$Mh(\xi)=\sup_{0<r<|\xi|} \left( h\star \frac{1}{2r}\mathbb I_{[-r,r]}\right)(\xi)=\sup_{0<r<|\xi|} \frac{1}{2r}\int_{|\xi'|\leq r} h(\xi-\xi') \,d\xi'.$$
We get that $Mh(\xi)$ is well defined almost everywhere in $\xi$: for $r$ large we use $h\in L^1$, and for $r\tend 0$ we get $h(\xi)<\infty$ a.e. in $\xi$. As a property of this operator we have, for $h\geq 0$ and for $\phi$ even and decreasing,
\begin{equation}\label{propertyM}
\int_{|\eta|\leq |\xi|} h(\xi-\eta)\phi(\eta)\,d\eta=\Sigma_{j=0}^{+\infty}\int_{\frac{|\xi|}{2^{j+1}}\leq |\eta|\leq \frac{|\xi|}{2^j}}h(\xi-\eta)\phi(\eta)\,d\eta
\end{equation}
$$\leq \Sigma_{j=0}^{+\infty}\,\frac{|\xi|}{2^{j-1}}\phi\left(\frac{|\xi|}{2^{j+1}}\right)\,\frac{2^{j-1}}{|\xi|}\int_{\frac{|\xi|}{2^{j+1}}\leq |\eta|\leq \frac{|\xi|}{2^j}}h(\xi-\eta)\,d\eta\leq 2Mh(\xi)\,\int_{|\eta|\leq |\xi|} \phi(\eta)\,d\eta.$$

We have the following lemma.
\begin{lemma}
For $0<r<|\xi|\leq 1$ the following inequality holds
$$I_r(\xi)\leq C(a)(1+\log^2|\xi|)\,\|u(1)\|_{X_1^\gamma}^2+C|\xi|(1+\log^2|\xi|)\,M|\hat{f_+}|^2(\xi).$$
\end{lemma}
\begin{proof}
First, for $\frac{|\xi|}{4}<r<|\xi|$, we do the change of variable $\eta=\eta'-\xi'$, so
$$I_r(\xi)=\frac{1}{2r}\int_{|\xi'|\leq r} \int_{|\eta'-\xi'|\leq\frac{|\xi-\xi'|}{2}}\ |\hat{f}_+(\xi-\eta')|^2\, \log^2 |\eta'-\xi'|\,d\eta'\,d\xi'.$$
In particular, $|\eta'|\leq\frac{|\xi|}{2}+\frac 32 r\leq 2|\xi|$, and
$$I_r(\xi)\leq \int_{|\eta'|\leq 2|\xi|}\ |\hat{f}_+(\xi-\eta')|^2\frac{1}{2r}\int_{|\xi'|\leq r}\, \log^2 |\eta'-\xi'|\,d\xi'\,d\eta'$$
$$\leq C\, \int_{|\eta'|\leq 2|\xi|}\ |\hat{f}_+(\xi-\eta')|^2\,\frac{(|\eta'|+r)\log^2(|\eta|'+r)}{2r} d\eta'\leq C \,\frac{|\xi|(1+\log^2|\xi|)}{r}\,\|f_+\|_{L^2}^2,$$
so for $\frac{|\xi|}{4}<r<|\xi|$ we have the upper-bound $C(1+\log^2|\xi|)\|f_+\|_{L^2}^2$.

For $0<r<\frac{|\xi|}{4}$ we perform the same change of variable, and get
$|\eta'|\leq\frac{|\xi|}{2}+\frac 32 r\leq |\xi|$, so
$$I_r(\xi)\leq \int_{|\eta'|\leq |\xi|}\ |\hat{f}_+(\xi-\eta')|^2\frac{1}{2r}\int_{|\xi'|\leq r}\, \log^2 |\eta'-\xi'|\,d\xi'\,d\eta'.$$
In the region $|\eta'|\geq 2r$ we have $|\xi'|\leq\frac{|\eta'|}{2}$, so $|\eta'-\xi'|\geq\frac{\eta'}{2}$, and by using \eqref{propertyM}, we get the desired upper-bound
$$\int_{|\eta'|\leq |\xi|}\ |\hat{f}_+(\xi-\eta')|^2 \log^2 \frac{|\eta'|}{2}\, d \eta'\leq 2M|\hat{f_+}|^2(\xi)\int_{|\eta'|\leq |\xi|} \log^2 \frac{|\eta'|}{2}\, d \eta'\leq C|\xi|(1+\log^2|\xi|)\,M|\hat{f_+}|^2(\xi).$$
In the remaining region $|\eta'|\leq 2r$ we decompose the integral in $\eta'$ into three parts: 
$$\int_{|\eta'|\leq \min\{ |\xi|,2r\}}\ |\hat{f}_+(\xi-\eta')|^2\frac{1}{2r}\left(\int_{|\xi'|\leq \frac{|\eta'|}{2}} +\int_{\frac{|\eta'|}{2}\leq |\xi'|\leq\frac 32|\eta'|} + \int_{\frac 32|\eta'|\leq|\xi'|\leq r}\right) d\eta'=I_r^1(\xi)+I_r^2(\xi)+I_r^3(\xi).$$
In the first one, $|\eta'-\xi'|\geq \frac{|\eta'|}{2}$, so 
$$I_r^1(\xi)\leq C\int_{|\eta'|\leq \min\{ |\xi|,2r\}}\ |\hat{f}_+(\xi-\eta')|^2  \log^2 \frac{|\eta'|}{2}\, d \eta',$$
so as before we recover the upper-bound $C|\xi|(1+\log^2|\xi|)\,M|\hat{f_+}|^2(\xi)$. 
In the second region we integrate in $\xi'$, and since $\xi'$ is of the size of $\eta'$, we end up as before
$$I_r^2(\xi)\leq C\int_{|\eta'|\leq \min\{ |\xi|,2r\}}\ |\hat{f}_+(\xi-\eta')|^2 \,\frac{|\eta'|\log^2|\eta'|}{2r}\,d\eta'\leq C\int_{|\eta'|\leq \min\{ |\xi|,2r\}}\ |\hat{f}_+(\xi-\eta')|^2  |\log^2 |\eta'|\, d \eta'.$$
In the last region $|\eta'-\xi'|\geq \frac{|\eta'|}{2}$, so we get again
$$I_r^3(\xi)\leq C\int_{|\eta'|\leq \min\{ |\xi|,2r\}}\ |\hat{f}_+(\xi-\eta')|^2 \,\log^2\frac{|\eta'|}{2}\,d\eta'.$$

In conclusion for $\frac{|\xi|}{4}<r<|\xi|$ we get the upper-bound $C(1+\log^2|\xi|)\|f_+\|_{L^2}^2$ and for $0<r<\frac{|\xi|}{4}$ we get the upper-bound $C|\xi|(1+\log^2|\xi|)\,M|\hat{f_+}|^2(\xi)$, so the Lemma follows.
\end{proof}

By using this Lemma, estimate \eqref{conv} gives us for $0<r<|\xi|$, 
$$\left(|\hat{f_+}|^2\star \frac{1}{2r}\mathbb I_{[-r,r]}\right)(\xi)\leq \|u(1)\|_{X_1^\gamma}\left(\frac{C(a,\delta)}{|\xi|^{4(\gamma+\delta)}}\right.$$
$$\left.+C(a)\,(1+\log^2|\xi|)\|u(1)\|_{X_1^\gamma}^2+C(a)\,|\xi|(1+\log^2|\xi|)\,M|\hat{f_+}|^2(\xi)\right).$$
The constant is independent of $r$, so by taking the supremum in $0<r<|\xi|$ we obtain for $\xi^2\leq 1$,
\begin{equation}\label{convbis}
M|\hat{f_+}|^2(\xi)\leq \|u(1)\|_{X_1^\gamma}\left(\frac{C(a,\delta)}{|\xi|^{4(\gamma+\delta)}}+C(a)\,(1+\log^2|\xi|)\|u(1)\|_{X_1^\gamma}^2+C(a)\,|\xi|(1+\log^2|\xi|)\,M|\hat{f_+}|^2(\xi)
\right).
\end{equation}
Since $M|\hat{f_+}|^2(\xi)<\infty$ almost everywhere in $\xi$, for $\|u(1)\|_{X_1^\gamma}C(a)|\xi|(1+\log^2|\xi|)<\frac 12$, so for $C(a)\|u(1)\|_{X_1^\gamma}<\frac 12$, we get the estimate
$$M|\hat{f_+}|^2(\xi)\leq \frac{C(a,\delta)}{|\xi|^{4(\gamma+\delta)}}\|u(1)\|_{X_1^\gamma}+C(a)\,(1+\log^2|\xi|)\|u(1)\|_{X_1^\gamma}^3\leq \frac{C(a,\delta)}{|\xi|^{4(\gamma+\delta)}}\|u(1)\|_{X_1^\gamma}.$$
Then, 
$$|\hat{f_+}|^2(\xi)=\lim_{r\tend 0} \left( |\hat{f_+}|^2\star \frac{1}{2r}\mathbb I_{[-r,r]}\right)(\xi)\ \leq M|\hat{f_+}|^2(\xi) \leq \frac{C(a,\delta)}{|\xi|^{4(\gamma+\delta)}}\|u(1)\|_{X_1^\gamma},$$
and the Proposition follows.
\end{proof}

\appendix
\section{Wave operators}
In this section we prove the existence of wave operators for the nonlinear equation \eqref{nonlin}. The difference with respect to the wave operators constructed in \cite{BV} is that here we shall weaken the conditions on the final data by working in spaces that fits with the conditions of Theorem \ref{theorem1}.

We first study the existence of the wave operators for the linearized equation \eqref{lin}.

\begin{prop}\label{waveoplin}
Let $0\leq\gamma<\frac 14$, $0<\nu$, and let $u_+\in X_1^{\gamma-\nu}$. Then the  equation (\ref{lin}) has a unique solution $u\in Z^\gamma$ satisfying as $t$ goes to infinity,  
\begin{equation*}\label{ratewavelinbis}
\| u(t)-e^{it\partial_x^2}u_+\|_{L^2}\leq \,\frac{C(a,\nu,\delta)}{t^{\frac14-(\gamma+\delta)}}\,\|u_+\|_{X_1^{\gamma-\nu}},
\end{equation*}
for any $0<\delta<\frac 14-\gamma$. In particular we have $u(1)\in X_1^\gamma$, with norm bounded by $\|u_+\|_{X^{\gamma-\mu}_1}$.
\end{prop}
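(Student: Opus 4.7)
The plan is to rewrite the scattering condition as the Duhamel-from-infinity integral equation
$$u(t)= e^{it\partial_x^2}u_+ \mp ia^2\int_t^\infty e^{i(t-\tau)\partial_x^2}\frac{\overline{u(\tau)}}{\tau^{1\pm 2ia^2}}\,d\tau \;=:\;\Phi(u)(t),$$
and to solve it by a contraction argument on $[T,\infty)$ for $T$ large enough. Writing $v(t)=u(t)-e^{it\partial_x^2}u_+$, this becomes $v=G+\mathcal{L}v$, where
$$G(t)= \mp ia^2\int_t^\infty e^{i(t-\tau)\partial_x^2}\frac{\overline{e^{i\tau\partial_x^2}u_+}}{\tau^{1\pm 2ia^2}}\,d\tau,\qquad \mathcal{L}v(t)=\mp ia^2\int_t^\infty e^{i(t-\tau)\partial_x^2}\frac{\overline{v(\tau)}}{\tau^{1\pm 2ia^2}}\,d\tau.$$

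The main step is the estimate on the driving term $G$. Taking Fourier transforms,
$$\hat G(t,\xi)=\mp ia^2\,e^{-it\xi^2}\overline{\hat u_+(-\xi)}\int_t^\infty \frac{e^{2i\tau\xi^2}}{\tau^{1\pm 2ia^2}}\,d\tau,$$
and since the phase $e^{2i\tau\xi^2}$ is non-resonant for $\xi\neq 0$, an integration by parts as in Lemma \ref{IBP} yields
$$\Bigl|\int_t^\infty\frac{e^{2i\tau\xi^2}}{\tau^{1\pm 2ia^2}}\,d\tau\Bigr|\leq \min\Bigl\{\frac{C}{\xi^2 t},\;C+|\log(\xi^2 t)|\Bigr\}.$$
Splitting the $L^2$-norm over $\{|\xi|\leq 1/\sqrt t\}$ (where the log bound is used with $|\hat u_+(\xi)|\leq |\xi|^{-2(\gamma-\nu)}\|u_+\|_{X_1^{\gamma-\nu}}$), $\{1/\sqrt t\leq |\xi|\leq 1\}$ (the $1/(\xi^2 t)$ bound with the same low-frequency control) and $\{|\xi|\geq 1\}$ (the $L^2$-norm of $u_+$), a routine computation gives
$$\|G(t)\|_{L^2}\leq \frac{C(a,\nu)}{t^{1/4-(\gamma-\nu)}}\|u_+\|_{X_1^{\gamma-\nu}}\leq \frac{C(a,\nu,\delta)}{t^{1/4-(\gamma+\delta)}}\|u_+\|_{X_1^{\gamma-\nu}}.$$
An analogous argument controls $\||\xi|^{2\gamma}\hat G(t,\xi)\|_{L^\infty(\xi^2\leq 1)}$.

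Next, set up the fixed point in
$$\mathcal E_T=\bigl\{v\in\mathcal{C}([T,\infty),L^2) : \|v\|_{\mathcal E_T}\leq 2\,C(a,\nu,\delta)\|u_+\|_{X_1^{\gamma-\nu}}\bigr\},\quad \|v\|_{\mathcal E_T}=\sup_{t\geq T}\,t^{1/4-(\gamma+\delta)}\|v(t)\|_{L^2}.$$
The linear map $\mathcal{L}$ is handled by the same Lemma \ref{IBP}-style integration by parts, but fed by $|\hat v(\tau,\xi)|\leq \tau^{-1/4+\gamma+\delta}\|v\|_{\mathcal E_T}\cdot(\text{something in }L^2_\xi)$: one gains a factor $T^{-\alpha}$ for some $\alpha>0$, and $\Phi$ becomes a contraction on $\mathcal E_T$ for $T$ large. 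The resulting fixed point yields $u\in\mathcal{C}([T,\infty),L^2)$ solving \eqref{lin} on $[T,\infty)$; coupling the above decay estimate with Propositions \ref{gl} and \ref{str} applied at time $T$ upgrades it to an element of $Z_T^\gamma:=Y_T^\gamma\cap L^4((T,\infty),L^\infty)$.

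It remains to extend $u$ backward to $[1,T]$ and to verify $u(1)\in X_1^\gamma$. The equation \eqref{lin} is time-reversible (its Fourier version \eqref{Ft} is a linear ODE in $\xi$), so backward well-posedness on the bounded interval $[1,T]$ is immediate. The differential inequality $\partial_t(|\hat u(t,\xi)|+|\hat u(t,-\xi)|)\leq \frac{a^2}{t}(|\hat u(t,\xi)|+|\hat u(t,-\xi)|)$ used in Lemma \ref{controls} is symmetric in time and gives $|\hat u(1,\xi)|\leq T^{a^2}(|\hat u(T,\xi)|+|\hat u(T,-\xi)|)$, so the weighted Fourier bound passes from $X_T^\gamma$ to $X_1^\gamma$ with constant depending only on $a,\nu,\delta$ (through $T$). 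Uniqueness of the global $Z^\gamma$ solution with the prescribed asymptotics follows from linearity together with the a priori estimates of Proposition \ref{gl}. The main obstacle in this scheme is the $L^2$ estimate on $G$: the low-frequency data on $\hat u_+$ comes only as a weighted $L^\infty$ bound, and the log singularity produced by the non-resonant integration near $\xi=0$ forces the strict gain $\nu>0$ in the exponent to close the argument—this is precisely why $u_+$ is assumed in $X_1^{\gamma-\nu}$ rather than $X_1^\gamma$.
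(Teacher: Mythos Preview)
Your Duhamel-from-infinity scheme is a different route from the paper's, which instead constructs the solution \emph{pointwise in frequency} by solving the diagonalized ODE system of Lemma~\ref{controls2} backward from infinity: after the change of variables to $(\mathring Y_\xi,\mathring Z_\xi)$, the coefficient matrix is $M(t)=O(a^2/t^2)$, which is integrable at infinity for every $a>0$, so the integral equation \eqref{asmodes} can be solved from the asymptotic data $(\mathring Y^+_\xi,\mathring Z^+_\xi)$ with no smallness assumption. This yields the pointwise Fourier bounds \eqref{largefreq}--\eqref{lowfreq}, from which $u(1)\in X_1^\gamma$ and the decay follow via the forward theory.

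Your estimate on the driving term $G$ is fine, but the contraction step for $\mathcal L$ has a genuine gap. You invoke a ``Lemma~\ref{IBP}-style'' integration by parts to gain a factor $T^{-\alpha}$; however, the mechanism in Lemma~\ref{IBP} works only because $u$ solves \eqref{lin}: after integrating by parts one substitutes $\partial_\tau\overline{\hat u(\tau,-\xi)}$ from the equation, and this produces $-A_{t_1,t_2}$ on the right-hand side, giving the factor $2$ and closing the estimate. For an arbitrary $v$ in your ball $\mathcal E_T$ there is no control on $\partial_\tau\hat v$, so this cancellation is unavailable. The only estimate you actually have is the crude one,
\[
\|\mathcal L v(t)\|_{L^2}\le a^2\int_t^\infty\frac{\|v(\tau)\|_{L^2}}{\tau}\,d\tau
\le \frac{a^2}{\tfrac14-(\gamma+\delta)}\,\frac{\|v\|_{\mathcal E_T}}{t^{1/4-(\gamma+\delta)}},
\]
which gives an operator norm \emph{independent of $T$}. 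Hence there is no $T^{-\alpha}$ gain, and your map is a contraction only when $a^2<\tfrac14-(\gamma+\delta)$, whereas the proposition is stated for all $a>0$. The paper avoids this by using the $O(t^{-2})$ integrability of the diagonalized system rather than a bare Duhamel bound; if you want to keep a fixed-point framework, you would need to build that same ODE diagonalization (or an equivalent normal-form step removing the $\tfrac{a^2}{t}\bar u$ term) into your iteration.
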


\begin{proof}
We are going to use similar arguments as those in Lemma \ref{controls2}. We define as in \eqref{defu+}
$$2 \mathring Z^+_\xi=e^{-ia^2\log\xi^2}\hat{u_+}(\xi),\quad  \mathring Y^+_\xi=\overline{ \mathring Z^+_{-\xi}}.$$
We define for $0\leq \tilde t\leq\frac{1}{4a^2}$ the solutions $(\mathring y_\xi,\mathring z_\xi)(\tilde t)$ of 
$$\left(\begin{array}{c}\mathring y_\xi(\tilde t)\\ \mathring z_\xi(\tilde t)\end{array}\right)=\left(\begin{array}{c}\mathring Y^+_\xi\\ \mathring Z^+_\xi\end{array}\right)+\int_0^{\tilde t} M\left(\frac{1}{\tau}\right)\,\left(\begin{array}{c}\mathring y_\xi(\tau)\\ \mathring z_\xi(\tau)\end{array}\right)\,\left(-\frac{1}{\tau^2}\right)d\tau,$$
Then
$$\sup_{0\leq \tilde t\leq \frac{1}{4a^2}}\left(|\mathring y_\xi(\tilde t)|+|\mathring z_\xi(\tilde t)|\right)\leq \left(|\mathring Y^+_\xi|+|\mathring Z^+_\xi|\right)+\int_0^\frac{1}{4a^2} \frac{a^2}{\alpha^2(1/\tau)}d\tau \sup_{0\leq \tilde t\leq \frac{1}{4a^2}}\left(|\mathring y_\xi(\tilde t)|+|\mathring z_\xi(\tilde t)|\right),$$
and as $\alpha(1/\tau)=\sqrt{1-2a^2\tau}$, we get
$$\sup_{0\leq \tilde t\leq \frac{1}{4a^2}}\left(|\mathring y_\xi(\tilde t)|+|\mathring z_\xi(\tilde t)|\right)\leq 2\left(|\mathring Y^+_\xi|+|\mathring Z^+_\xi|\right).$$
Now, for $4a^2\leq t< \infty$, the functions $(\mathring Y_\xi(t),\mathring Z_\xi(t))=(\mathring y_\xi(1/t),\mathring z_\xi(1/t))$ solve  \eqref{asmodes} and 
$$|\mathring Y_\xi(t)|^2+|\mathring Z_\xi(t)|^2\leq C\left(|\mathring Y^+_\xi|^2+|\mathring Z^+_\xi|^2\right)= C(|\hat{u_+}(\xi)|^2+|\hat{u_+}(-\xi)|^2).$$
In particular, 
$$\partial_t\left(\begin{array}{c}\mathring Y_\xi\\ \mathring Z_\xi\end{array}\right)=M(t)\left(\begin{array}{c}\mathring Y_\xi\\ \mathring Z_\xi\end{array}\right)=\frac{a^2}{2t^2\alpha^2(t)}
\left(\begin{array}{cc} -1 & e^{-2i\Phi(t)}\\e^{2i\Phi(t)} & -1\end{array}\right)\left(\begin{array}{c}\mathring Y_\xi\\ \mathring Z_\xi\end{array}\right).$$
Since $\mathring Y^+_\xi=\overline{ \mathring Z^+_{-\xi}}$ and
$$\partial_t\left(\begin{array}{c}\mathring Y_\xi-\overline{ \mathring Z_{-\xi}}\\ \mathring Z_\xi-\overline{ \mathring Y_{-\xi}}\end{array}\right)=M(t)\left(\begin{array}{c}\mathring Y_\xi-\overline{ \mathring Z_{-\xi}}\\ \mathring Z_\xi-\overline{ \mathring Y_{-\xi}}\end{array}\right),$$
we obtain that $\mathring Y_\xi(t)=\overline{ \mathring Z_{-\xi}(t)}$. Therefore, with the notations of Lemma \ref{controls2}, we can define for $4a^2\leq t< \infty$, 
$$\left(\begin{array}{c}Y_\xi(t)\\ Z_\xi(t)\end{array}\right)
=P(t)\left(\begin{array}{cc} e^{i\Phi(t)} &0\\0 & e^{-i\Phi(t)}\end{array}\right)\left(\begin{array}{c}\mathring Y_\xi(t)\\ \mathring Z_\xi(t)\end{array}\right)=\left(\begin{array}{c}e^{i\Phi(t)} \mathring Y_\xi(t)+e^{-i\Phi(t)} \mathring Z_\xi(t)\\ i\alpha(t)e^{i\Phi(t)} \mathring Y_\xi(t)-i\alpha(t)e^{-i\Phi(t)}\mathring Z_\xi(t)\end{array}\right),$$ 
solution of \eqref{system}:
$$\partial_t\left(\begin{array}{c}Y_\xi\\ Z_\xi\end{array}\right)
=\left(\begin{array}{cc}0 &1\\-\left(1-\frac{2a^2}{t}\right) & 0\end{array}\right)\left(\begin{array}{c}Y_\xi\\ Z_\xi\end{array}\right),$$
which satisfies $Y_\xi(t)=\overline{ Y_{-\xi}(t)}$ and $Z_\xi(t)=\overline{ Z_{-\xi}(t)}$. 
Moreover, since
$$|\mathring Y_\xi(t)|^2+|\mathring Z_\xi(t)|^2=\left|\frac 12 Y_\xi(t)-\frac{i}{2\alpha(t)}Z_\xi(t)\right|^2+\left|\frac 12 Y_\xi(t)+\frac{i}{2\alpha(t)}Z_\xi(t)\right|^2= \frac{|Y_\xi(t)|^2}{2}+\frac{|Z_\xi(t)|^2}{2\alpha^2(t)},$$
and from $\frac{1}{\sqrt{2}}\leq \alpha(t)\leq 1$ it follows that
\begin{equation}\label{rateapp}
|Y_\xi(t)|^2+|Z_\xi(t)|^2\leq C(|\hat{u_+}(\xi)|^2+|\hat{u_+}(-\xi)|^2).
\end{equation}
We continue the definition of $\left(Y_\xi(t),Z_\xi(t)\right)$ for the remaining $0<t<\infty$ as solution of \eqref{system}. It follows that $u(t,x)$ defined by $u(t,x)=w(t,x)e^{- ia^2\log t}$, where
$$Y_\xi( t)=\widehat{\Re \,w}\left(\frac{t}{\xi^2},\xi\right)\,\,,\,\,Z_\xi( t)=\widehat{\Im \,w}\left(\frac{t}{\xi^2},\xi\right),$$
is a solution of \eqref{lin}. In particular, \eqref{rateapp} is satisfied for all $1\leq t <\infty$, so we get for large frequencies $\xi^2\geq \frac{1}{t}$ that
\begin{equation}\label{largefreq}
|\hat{u}(t,\xi)|^2+|\hat{u}(t,-\xi)|^2\leq C(a)(|\hat{u_+}(\xi)|^2+|\hat{u_+}(-\xi)|^2).
\end{equation}

We define next
$$y_\xi(t)=Y_\xi\left(\frac 1t\right),\quad z_\xi(t)=Z_\xi\left(\frac 1t\right),$$
solution for $1\leq t<\infty$ of 
$$y'_\xi=-\frac{1}{t^2}z_\xi,\quad z'_\xi=\left(\frac{1}{t^2}-\frac{2a^2}{t}\right)y_\xi,$$
with initial data $(y_\xi(1),z_\xi(1))=(Y_\xi(1),Z_\xi(1))$. We take $\sigma_\epsilon=y_\xi^2/\epsilon  +\epsilon z_\xi^2$ and proceeding as in Lemma \ref{controls2}, for all $1< t$,
$$\sigma_\epsilon(t)\leq e^{\frac 1\epsilon+\epsilon+2a^2\epsilon\log t}\sigma_\epsilon(1).$$
By making for $t>3/2$ the choice $\epsilon=\frac{1}{\sqrt{\log t}}$ and for $1\leq t\leq 3/2$ the choice $\epsilon=1$, we get for all $1\leq t$ the estimate 
$$|y_\xi(t)|^2+|z_\xi(t)|^2\leq C(1+\log t)e^{2+2a^2\sqrt{\log t}}\left(|y_\xi(1)|^2+|z_\xi(1)|^2\right)$$
$$\leq C(a)(1+\log t)e^{2+2a^2\sqrt{\log t}}(|\hat{u_+}(\xi)|^2+|\hat{u_+}(-\xi)|^2).$$
So we finally get the estimates for low frequencies $\xi^2\leq \frac{1}{t}$ of the solution $u$ of \eqref{lin}
\begin{equation}\label{lowfreq}
|\hat{u}(t,\xi)|^2+|\hat{u}(t,-\xi)|^2\leq C(a)(1+|\log t\xi^2|)e^{2+2a^2\sqrt{|\log t\xi^2|}}(|\hat{u_+}(\xi)|^2+|\hat{u_+}(-\xi)|^2).
\end{equation}
Therefore calling $f_+(x)=u(1,x)$ we have obtained by \eqref{largefreq}  and \eqref{lowfreq} that $f_+\in X_1^\gamma$ with norm bounded by $\|u_+\|_{X^{\gamma-\mu}_1}$. From Propositions \ref{gl}, \ref{linearasc}, and \eqref{defu+}, \eqref{u+modes} it follows that $u=S(t,1)f_+\in Z^\gamma$ and for all $0<\delta<\frac 14-\gamma$
$$\|S(t,1)f_+-e^{i(t-1)\partial_x^2}u_+\|_{L^2}\leq \,\frac{C(a,\nu,\delta)}{t^{\frac14-(\gamma+\delta)}}\,\left(\|u_+\|_{L^2}+\||\xi|^{2(\gamma-\nu)}\hat{u}_+(\xi)\|_{L^\infty(|\xi|\leq 1)}\right).$$
\end{proof}

\begin{remark}Let us notice that \eqref{largefreq} together with \eqref{lowfreq} imply that for $t_1>0$ and $\delta_1>0$
$$|\hat{u}(t_1,\xi)|\leq \left(C(a)+\frac{C(a,\delta_1)}{(t_1\xi^2)^{\delta_1}}\right)(|\hat{u_+}(\xi)|+|\hat{u_+}(-\xi)|).$$
This, combined with Lemma \ref{u+}, yields, for any times $t_1,t_2\geq 1$ and for any positive $\delta_1,\delta_2$,
\begin{equation}\label{backwardest}
|\hat{u}(t_1,\xi)|\leq \left(C(a)+\frac{C(a,\delta_1)}{(t_1\xi^2)^{\delta_1}}\right)\left(C(a)+\frac{C(a,\delta_2)}{(t_2\xi^2)^{\delta_2}}\right)(|\hat{u}(t_2,\xi)|+|\hat{u}(t_2,-\xi)|).
\end{equation}
It follows that for $1\leq t_1\leq t_2$ and $\gamma+\delta_1+\delta_2<\frac 14$,
\begin{equation}\label{backwardestL2}
\|u(t_1)\|_{L^2}\leq \|u(t_2)\|_{L^2}\left(C(a)+C(a,\delta_1)\frac{t_2^{\delta_1}}{t_1^{\delta_1}}\right)
\end{equation}
$$+C(a,\delta_2)\frac{\||\xi|^{2\gamma}\hat{u}(t_2,\xi)\|_{L^\infty(\xi^2\leq\frac{1}{t_2})}}{t_1^{\delta_1}t_2^{\delta_2}}\left\|\frac{1}{\xi^{2\gamma+2(\delta_1+\delta_2)}}\right\|_{L^2(\xi^2\leq\frac{1}{t_2})}\leq C(a,\delta_1,\delta_2)\frac{t_2^{\frac 14+\delta_1}}{t_1^{\delta_1}}\|u(t_2)\|_{X^\gamma_{t_2}}.$$

\end{remark}

Now we show the existence of wave operators for the nonlinear equation \eqref{nonlin} with respect to the linear solutions of \eqref{lin}.

\begin{prop}\label{waveopnonlin}Let $0\leq\gamma<\frac 14$. For all $f_+\in X_1^\gamma$, small with respect to $a$, equation (\ref{nonlin}) has a unique solution $u \in L^\infty((1,\infty),L^2(\mathbb R))\cap L^4((1,\infty),L^\infty(\mathbb R))$ satisfying as $t$ goes to infinity 
$$\|u(t)-S(t,1)f_+\|_{L^2}+\|u(\tau)-S(\tau,1)f_+\|_{L^4((t,\infty),L^\infty)}\leq \frac{C(a,\delta)}{t^{\frac 14-(\gamma+\delta)}}\|f_+\|_{X_1^\gamma},$$
for any $0<\delta<\frac 14-\gamma$. 
\end{prop}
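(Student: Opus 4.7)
The plan is to set $v := u - S(\cdot,1)f_+$ and recast the problem as a final-value problem for $v$, which should vanish at infinity. The natural Duhamel representation is
$$\Psi(v)(t) := i\int_t^\infty S(t,\tau)\,\frac{F\bigl(S(\tau,1)f_+ + v(\tau)\bigr)}{\tau}\,d\tau,$$
and I would produce the solution by a fixed-point argument for $\Psi$ in the Banach space
$$E := \Bigl\{v\,:\,\|v\|_E := \sup_{t\geq 1}\,t^{\frac{1}{4}-\gamma-\delta}\bigl(\|v(t)\|_{L^2} + \|v\|_{L^4((t,\infty),L^\infty)}\bigr) < \infty\Bigr\},$$
for a fixed $0<\delta<\frac14-\gamma$. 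The weighted norm is tailored to the two decay rates claimed in the conclusion.

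The principal new inputs are backward-in-time bounds on the linear propagator $S(t,\tau)$ for $1\leq t\leq\tau$. The $L^2$ bound is already provided by \eqref{backwardestL2}: for $\delta_1,\delta_2>0$ with $\gamma+\delta_1+\delta_2<\tfrac14$,
$$\|S(t,\tau)g\|_{L^2} \leq C(a,\delta_1,\delta_2)\,\frac{\tau^{\frac{1}{4}+\delta_1}}{t^{\delta_1}}\,\|g\|_{X_\tau^\gamma}.$$
For the $L^4((t,\tau),L^\infty)$ analogue I would use the Duhamel identity
$$S(t',\tau)g = e^{i(t'-\tau)\partial_x^2}g + ia^2\int_\tau^{t'} e^{i(t'-s)\partial_x^2}\,\frac{\overline{S(s,\tau)g}}{s^{1+2ia^2}}\,ds,$$
combine Minkowski's inequality on the retarded term with the free 1D Strichartz estimate $\|e^{i(\cdot-\tau)\partial_x^2}g\|_{L^4(\mathbb R,L^\infty)}\leq C\|g\|_{L^2}$, and feed in the backward $L^2$ bound above. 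This should produce a companion inequality of the same form,
$$\|S(\cdot,\tau)g\|_{L^4((t,\tau),L^\infty)} \leq C(a,\delta_1,\delta_2)\,\frac{\tau^{\frac{1}{4}+\delta_1}}{t^{\delta_1}}\,\|g\|_{X_\tau^\gamma}.$$

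With these in hand, I would substitute $g=F(u(\tau))$ in $\Psi$, apply Minkowski in $\tau$, and invoke Lemma \ref{Duh23} with $\alpha=\frac{1}{4}+\delta_1<\frac12-\gamma$. An inspection of the proof of that Lemma shows it only uses the $L^\infty L^2$ and $L^4 L^\infty$ norms of $u$, and both are dominated by $C(a)\|f_+\|_{X_1^\gamma}+\|v\|_E$ via the estimates \eqref{estL2} and \eqref{eststr} applied to $S(\cdot,1)f_+$. Combining these elements gives
$$\|\Psi(v)(t)\|_{L^2} + \|\Psi(v)\|_{L^4((t,\infty),L^\infty)} \leq \frac{C(a)}{t^{\frac14-\gamma}}\,\bigl(a\|u\|_\ast^2+\|u\|_\ast^3\bigr),$$
where $\|u\|_\ast := \|u\|_{L^\infty((1,\infty),L^2)} + \|u\|_{L^4((1,\infty),L^\infty)}$. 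Since $\frac{1}{4}-\gamma>\frac14-\gamma-\delta$, this shows that $\Psi$ maps a small ball of $E$ into itself once $\|f_+\|_{X_1^\gamma}$ is chosen small with respect to $a$; the same computation applied to $F(u_1)-F(u_2)$ produces a strict contraction, yielding the unique $v\in E$. Uniqueness in the class $L^\infty((1,\infty),L^2)\cap L^4((1,\infty),L^\infty)$ subject to the prescribed decay follows because any such solution automatically lies in $E$. The main difficulty I anticipate is the careful matching of the growth factor $\tau^{\frac14+\delta_1}$ in the backward propagator estimates against the decay $\tau^{-(\frac14-\delta_1-\gamma)}$ supplied by Lemma \ref{Duh23}; the two balance precisely because $\gamma<\frac14$, and this scaling cancellation is the analytic core of the argument.
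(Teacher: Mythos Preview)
Your proposal is correct and follows essentially the same route as the paper: a contraction argument in the weighted space measuring $\|u(t)-S(t,1)f_+\|_{L^2}+\|u-S(\cdot,1)f_+\|_{L^4((t,\infty),L^\infty)}$, using the backward $L^2$ bound \eqref{backwardestL2}, the backward $L^4_tL^\infty_x$ companion derived exactly as you describe (Duhamel for $S$, free Strichartz, then feed in \eqref{backwardestL2}), and Lemma \ref{Duh23} with $\alpha=\tfrac14+\delta_1$ to close. The only cosmetic differences are that the paper runs the fixed point on $u$ rather than $v=u-S(\cdot,1)f_+$, and that it also includes the forward piece $\|S(\cdot,\tau)g\|_{L^4((\tau,\infty),L^\infty)}$ via \eqref{eststr}, which your Minkowski bookkeeping shows is in fact not needed.
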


\begin{proof}
We shall perform a fixed point argument for the operator
$$Bu=S(t,1)f_++\int_t^{\infty}S(t,\tau)\frac{iF(u(\tau))}{\tau}d\tau $$
in the closed ball
$$A_R=\left\{u\,|\,\|u\|_A=\underset{t\in[1,\infty)}{\sup}\,t^{\frac 14-(\gamma+\delta)}\,\left(\| u(t)-S(t,1)f_+\|_{L^2}+\|u(\cdot)-S(\cdot,1)f_+\|_{L^4((t,\infty),L^\infty)}\right)\leq R\right\},$$
with $R$ to be precised later.

Let $u\in A_R$. In particular we have for all admissible couples $(p,q)$, interpolated between $(\infty,2)$ and $(4,\infty)$, 
$$\underset{t\in[1,\infty)}{\sup}\,t^{\frac 14-(\gamma+\delta)}\,\|u(\cdot)-S(\cdot,1)f_+\|_{L^p((t,\infty),L^q)}\leq CR,$$
and therefore, by the estimates \eqref{estL2} and \eqref{eststr},
\begin{equation}\label{Strboundter}
\|u\|_{L^p((t,\infty),L^q)}\leq C\|S(\cdot,1)f_+\|_{L^p((1,\infty),L^q)}+C\|u\|_A \leq C\|f_+\|_{X_1^\gamma}+C\|u\|_A.
\end{equation}

We want to estimate 
$$Bu-S(t,1)f_+=\int_t^{\infty}S(t,\tau)\frac{iF(u(\tau))}{\tau}\,d\tau=J(t).$$
We have 
$$\| J(t)\|_{L^2}+\|J\|_{L^4((t,\infty),L^\infty)}\leq  \int_t^{\infty}\|S(t,\tau)iF(u(\tau))\|_{L^2}\frac{d\tau}{\tau}+\int_t^{\infty}\|S(\cdot,\tau)iF(u(\tau))\|_{L^4((\tau,\infty),L^\infty)}\frac{d\tau}{\tau}$$
$$+\int_t^{\infty}\|S(\cdot,\tau)iF(u(\tau))\|_{L^4((t,\tau),L^\infty)}\frac{d\tau}{\tau}.$$
We upper-bound the first term by using the backwards estimates \eqref{backwardestL2} with $(t_1,t_2)=(t,\tau)$ and $\delta_1=\delta\in]0,\frac14-\gamma[$,
$$\int_t^{\infty}\|S(t,\tau)iF(u(\tau))\|_{L^2}\frac{d\tau}{\tau}\leq C(a,\delta)\int_t^{\infty}\frac{\tau^{\frac14+\delta}}{t^{\delta}}\|F(u(\tau))\|_{X_\tau^\gamma}\frac{d\tau}{\tau}.$$
For the second we use the forward estimate \eqref{eststr}, 
$$\int_t^{\infty}\|S(\cdot,\tau)iF(u(\tau))\|_{L^4((\tau,\infty),L^\infty)}\frac{d\tau}{\tau}\leq C(a)\int_t^{\infty}\tau^\frac 14(1+\log^2 \tau)\left\|F(u(\tau))\right\|_{X_\tau^{\gamma}}\,\frac{d\tau}{\tau}$$
We write the third term as
$$\int_t^{\infty}\|S(\cdot,\tau)iF(u(\tau))\|_{L^4((t,\tau),L^\infty)}\frac{d\tau}{\tau}$$
$$=\int_t^{\infty}\left\|e^{i(s-\tau)\partial_x^2}F(u(\tau))-ia^2\int_s^\tau e^{i(\tau-r)\partial_x^2}\frac{\overline{S(r,\tau)iF(u(\tau))}}{r^{1+ 2ia^2}}dr\right\|_{L^4((t,\tau),L^\infty)}\frac{d\tau}{\tau}.$$
We use the Strichartz estimates and the backwards estimates \eqref{backwardestL2},
$$\int_t^{\infty}\|S(\cdot,\tau)iF(u(\tau))\|_{L^4((t,\tau),L^\infty)}\frac{d\tau}{\tau}\leq C(a)\int_t^{\infty}\|F(u(\tau))\|_{L^2}+\left\|\frac{S(r,\tau)iF(u(\tau))}{r}\right\|_{L^1((t,\tau),L^2)}\frac{d\tau}{\tau}$$
$$\leq \int_t^{\infty}\|F(u(\tau))\|_{X_\tau^\gamma}\left(C\tau^\frac 14+C(a)\left\|\frac{\tau^{\frac14+\delta}}{r^{1+\delta}}\right\|_{L^1(t,\tau)}\right)\frac{d\tau}{\tau}\leq C(a)\int_t^{\infty}\frac{\tau^{\frac 14+\delta}}{t^\delta}\|F(u(\tau))\|_{X_\tau^\gamma}\frac{d\tau}{\tau}.$$
Summarizing, we have obtained that
$$\| J(t)\|_{L^2}+\|J\|_{L^4((t,\infty),L^\infty)}\leq C(a,\delta)\int_t^{\infty}\tau^{\frac14+\delta}\|F(u(\tau))\|_{X_\tau^\gamma}\frac{d\tau}{\tau}.$$

Now Lemma \ref{Duh23} with $(t_1,t_2)=(t,\infty)$ and $\alpha=\frac14+\delta$ gives
$$\| J(t)\|_{L^2}+\|J\|_{L^4((t,\infty),L^\infty)}\leq \frac{C(a,\delta)}{t^{\frac 14-(\gamma+\delta)}}\,\Sigma_{j\in\{1,2\}}\left(a\|u\|^2_{L^{p_j}((t,\infty),L^{q_j})}+\|u\|^3_{L^{p_j}((t,\infty),L^{q_j})}\right),$$

where $(p_1,q_1)=(\infty,2)$ and $(p_2,q_2)=(4,\infty)$.
Therefore, in view of \eqref{Strboundter}
$$\|J\|_{A}\leq Ca\,\|f_+\|_{X_1^\gamma}^2+Ca\,\|u\|_A^2+C\,\|f_+\|_{X_1^\gamma}^3+C\,\|u\|_A^3.$$
For all $f_+\in X_1^\gamma$ small with respect to $a$, there exists $R$ small with respect to $a$, such that the operator $B$ is a contraction on $A_R$, and the Proposition follows. 
\end{proof}

The last two propositions imply the following result.
\begin{theorem}\label{waveopnonlinfinal}
Let $0\leq\gamma<\frac 14$, $0<\nu$ and $u_+\in X_1^{\gamma-\nu}$ with norm small with respect to $a$. Then the  equation (\ref{nonlin}) has a unique solution $u\in L^\infty((1,\infty),L^2(\mathbb R))\cap L^4((1,\infty),L^\infty(\mathbb R))$ satisfying as $t$ goes to infinity 
\begin{equation*}\label{ratewavelinbis}
\| u(t)-e^{i(t-1)\partial_x^2}u_+\|_{L^2}\leq \,\frac{C(a,\nu,\delta)}{t^{\frac14-(\gamma+\delta)}}\,\|u_+\|_{X_1^{\gamma-\nu}},
\end{equation*}
for any $0<\delta<\frac 14-\gamma$ .
\end{theorem}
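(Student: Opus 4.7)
The plan is to compose the two wave operators already built in this appendix, since Theorem \ref{waveopnonlinfinal} is essentially their concatenation. Given $u_+\in X_1^{\gamma-\nu}$ with norm small with respect to $a$, I would first apply Proposition \ref{waveoplin} to produce a solution $u^L\in Z^\gamma$ of the linear equation \eqref{lin}. Writing $f_+:=u^L(1)$, that proposition provides
\[
\|f_+\|_{X_1^\gamma}\leq C(a,\nu)\,\|u_+\|_{X_1^{\gamma-\nu}},\qquad \|u^L(t)-e^{i(t-1)\partial_x^2}u_+\|_{L^2}\leq \frac{C(a,\nu,\delta)}{t^{1/4-(\gamma+\delta)}}\|u_+\|_{X_1^{\gamma-\nu}},
\]
and $u^L=S(\cdot,1)f_+$. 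In particular, if $u_+$ is chosen small enough in $X_1^{\gamma-\nu}$, then $f_+$ is correspondingly small in $X_1^\gamma$, so the smallness hypothesis needed to apply Proposition \ref{waveopnonlin} is met.

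Next I would feed this $f_+$ into Proposition \ref{waveopnonlin} to obtain the unique solution $u\in L^\infty((1,\infty),L^2)\cap L^4((1,\infty),L^\infty)$ of \eqref{nonlin} satisfying
\[
\|u(t)-S(t,1)f_+\|_{L^2}\leq \frac{C(a,\delta)}{t^{1/4-(\gamma+\delta)}}\|f_+\|_{X_1^\gamma}.
\]
Since $S(t,1)f_+=u^L(t)$, a single triangle inequality combines the two bounds into the claimed estimate
\[
\|u(t)-e^{i(t-1)\partial_x^2}u_+\|_{L^2}\leq \frac{C(a,\nu,\delta)}{t^{1/4-(\gamma+\delta)}}\,\|u_+\|_{X_1^{\gamma-\nu}}.
\]
For uniqueness, if $\tilde u$ is any solution with the stated regularity and asymptotic behaviour, then $\|\tilde u(t)-S(t,1)f_+\|_{L^2}$ decays at the required rate, so by the contraction part of Proposition \ref{waveopnonlin} (carried out in the ball $A_R$) one gets $\tilde u=u$; the underlying $f_+$ is itself pinned down by $u_+$ through the uniqueness clause of Proposition \ref{waveoplin}.

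The main obstacle was really dealt with in the two preparatory propositions, and no new estimate is needed here; what must be verified is simply the compatibility of function spaces. The $\nu$-loss between $X_1^{\gamma-\nu}$ and $X_1^\gamma$ in Proposition \ref{waveoplin} is precisely what lets the logarithmic factor $(1+|\log t\xi^2|)e^{2a^2\sqrt{|\log t\xi^2|}}$ appearing in the low-frequency backwards estimate \eqref{lowfreq} be absorbed into a harmless $|\xi|^{-2\nu}$ weight at $t=1$, producing an $f_+$ in the slightly stronger space required as input by the nonlinear wave operator. Once this bookkeeping is in place, the composition is automatic and the theorem follows.
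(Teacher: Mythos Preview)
Your argument is correct and is exactly what the paper does: it simply states that ``the last two propositions imply the following result,'' meaning Proposition~\ref{waveoplin} and Proposition~\ref{waveopnonlin} are composed via the triangle inequality, precisely as you outline. No additional ingredient is needed.
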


\section{Remarks on the growth of the zero-Fourier modes}\label{s:appendix-fourier}

\subsection{Growth of the zero-Fourier modes for the linear equation}\label{ss:appendix-fourier-lin} Let $u$ be the global $H^2$ solution of \eqref{lin} obtained as a consequence of Lemma \ref{controls}. 
We shall get here some extra-information on $u(t)$, via estimates done directly on $w(t)=u(t)e^{\pm ia^2\log t}$ the solution of \eqref{linearized}: 
$$i\partial_tw+w_{xx}\pm\frac{a^2}{t}(w+\overline{w})=0.$$
We shall use the fact that $w\in H^2$ to get proper integration by parts at the level of the Laplacian. 

Let us notice that since $u$ is a solution of the linear equation \eqref{lin}, if $\hat u(t_0)$ is continuous, so will be $\hat u(t)$. In this case, by integrating in space, we get the law of evolution of the zero-Fourier modes,
$$i\partial_t\int w=\mp \frac{a^2}{t}\int\Re w,$$
so
$$\partial_t\int\Re w=0\quad,\quad\partial_t\int\Im w=\pm\frac{a^2}{t}\int\Re w=\pm\frac{a^2}{t}\int\Re w(t_0).$$
Therefore
$$\int\Im  w(t)=\int\Im  w(t_0)\pm2a^2 \int\Re  w(t_0) \log\frac{t}{t_0}.$$
In conclusion, if the zero-mode $\int  w (t_0)$ is null, then it will be the same for all times,
$\int w (t)=0.$
Furthermore, if the real part of the zero-modes $\Re\int  w (t_0)$ is not null, then we have a logarithmic growth of the zero-modes $\int  w (t)$, independently of the size of $t_0$, that cannot be avoided,
\begin{equation}\label{evolutionzeromodesomega}
\int w(t)=\int w(t_0)\pm2ia^2\int\Re w(t_0) \,\log\frac{t}{t_0}.
\end{equation}
Recovering the expression of $u$, we obtain \eqref{evolutionzeromodes}.
\\

\subsection{Growth of the Fourier modes for the nonlinear equation} \label{ss:appendix-fourier-nonlin}

Let $u$ be the global $H^1$ solution of \eqref{nonlin} obtained by Corollary \ref{globalHs}. 
In particular,
$$\Sigma_{0\leq k\leq 1}\|\partial_x^k u\|_{Z}\leq C(a)\, \Sigma_{0\leq k\leq 1}\|\partial_x^k u(1)\|_{X_1}\leq C(a,u(1)).$$
\\
For the computations on Fourier modes in this subsection, the existence of $\hat u(t,0)$ has to be justified. We have the following control.

\begin{lemma}
If $ x\,u(1)\in L^2$, then 
\begin{equation*}\label{weights}
\|xu(t)\|_{L^2}\leq \,C(a,u(1))\,t^{\tilde C(a,u(1))}.
\end{equation*}
\end{lemma}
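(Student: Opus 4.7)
The plan is a weighted energy estimate on $V(t) = \|xu(t)\|_{L^2}^2$. I would compute $V'(t)$ formally from \eqref{nonlin}, derive a Gronwall-type inequality, and integrate it to get polynomial growth.

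Writing $V'(t) = 2\Re\int x^2\overline{u}u_t\,dx$ and substituting $iu_t$ from \eqref{nonlin} yields
\[
V'(t) = -2\int x^2 \Im(\overline{u}u_{xx})\,dx \mp 2\int x^2 \Im\!\left(\frac{a^2\overline{u}^2}{t^{1\pm 2ia^2}}\right)dx - \frac{2}{t}\int x^2 \Im(\overline{u}F(u))\,dx.
\]
For the Schr\"odinger contribution, integrating by parts twice gives $\int x^2\overline{u}u_{xx}\,dx = -2\int x\overline{u}u_x\,dx - \int x^2|u_x|^2\,dx$; the second summand is real and drops out of the imaginary part, and Cauchy--Schwarz bounds the first by $2\sqrt{V(t)}\,\|u_x(t)\|_{L^2}$. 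The resonant linear term contributes at most $(2a^2/t)\,V(t)$ since $|a^2/t^{1\pm 2ia^2}| = a^2/t$. From the structure of $F$ in \eqref{F} one has the pointwise bound $|F(u)| \leq C(a|u|^2 + |u|^3)$, so the nonlinear contribution is at most $(C/t)(a\|u\|_{L^\infty} + \|u\|_{L^\infty}^2)\,V(t)$.

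I would next invoke Corollary \ref{globalHs} with $s=1$ to get the uniform bound $\|u(t)\|_{L^2} + \|u_x(t)\|_{L^2} \leq C(a,u(1))$ for $t \geq 1$; the one-dimensional Gagliardo--Nirenberg inequality $\|u\|_{L^\infty}^2 \leq 2\|u\|_{L^2}\|u_x\|_{L^2}$ then gives also $\|u(t)\|_{L^\infty} \leq C(a,u(1))$. Assembling the three estimates yields
\[
V'(t) \leq C(a,u(1))\,\sqrt{V(t)} + \frac{C(a,u(1))}{t}\,V(t).
\]
Setting $W = \sqrt{V}$ reduces this to the scalar linear inequality $W'(t) \leq C + (C/t)\,W(t)$ with $C = C(a,u(1))$; multiplying by the integrating factor $t^{-C}$ and integrating from $1$ to $t$ gives $W(t) \leq C(a,u(1))\, t^{\tilde C(a,u(1))}$ with $\tilde C(a,u(1)) = \max\{1,C(a,u(1))\}$, which is precisely the claimed bound.

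The main obstacle, though technical, is rigorously justifying the formal differentiation: a priori $xu(t)$ is not known to belong to $L^2$ for $t > 1$. I would handle this by a standard approximation procedure. Replace $u(1)$ by smoother data $u_n(1) \in H^2$ with $xu_n(1) \in L^2$ converging to $u(1)$ in $H^1 \cap X_1^\gamma$; persistence of $H^2$ regularity (Corollary \ref{globalHs} with $s=2$) then ensures $u_n(t) \in H^2$, which makes the integration by parts and the time-differentiation of $\|xu_n(t)\|_{L^2}^2$ rigorous. The coefficients in the resulting differential inequality depend only on the $H^1$ and $X_1^\gamma$ norms of $u(1)$, not on the higher regularity of $u_n(1)$, so the bound $\|xu_n(t)\|_{L^2} \leq C(a,u(1))\,t^{\tilde C(a,u(1))}$ is uniform in $n$. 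Passing to the limit $n\to\infty$ via continuous dependence in $L^2$ from the fixed-point argument underlying Proposition \ref{global} together with Fatou's lemma then yields both $xu(t)\in L^2$ and the polynomial growth bound claimed in the lemma.
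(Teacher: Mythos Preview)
Your argument is essentially the paper's: both derive the same differential inequality
\[
\partial_t W(t) \leq C(a,u(1)) + \frac{C(a,u(1))}{t}\,W(t),\qquad W=\|xu(t)\|_{L^2},
\]
by multiplying \eqref{nonlin} by $x^2\overline{u}$, taking imaginary parts, and using the $H^1$ bound from Corollary~\ref{globalHs}. The only difference is in how the formal computation is made rigorous. The paper replaces the weight $x^2$ by a smooth truncation $\varphi_R(x)=R^2\varphi(x/R)$ with $\varphi=x^2$ near the origin and $(\partial_x\varphi)^2\leq C\varphi$; since $\varphi_R$ is bounded, $\int\varphi_R|u(t)|^2$ is automatically finite for $u(t)\in L^2$, the differential inequality holds rigorously for each $R$, and one lets $R\to\infty$. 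You instead approximate $u(1)$ by smoother data $u_n(1)\in H^2$.

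Your regularization carries a small circularity: knowing $u_n(t)\in H^2$ justifies the integration by parts in the Laplacian term, but it does not by itself guarantee $xu_n(t)\in L^2$ for $t>1$, which you need before you can differentiate $\|xu_n(t)\|_{L^2}^2$. One still has to propagate the weight for $u_n$, and the cleanest way to do that is precisely the spatial-cutoff argument the paper uses. So the paper's route is more self-contained here; your density argument would need either an additional local-in-time persistence result in the weighted space or the cutoff step inserted anyway.
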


\begin{proof}
Let $\varphi$ be a positive radial cutoff function, equal to $x^2$ on $B(0,1)$, such that $(\partial_x\varphi)^2\leq C\varphi$. For $R>0$ we define 
$$\varphi_R(x)=R^2\varphi\left(\frac{x}{R}\right).$$

We multiply equation \eqref{nonlin} by $\varphi_R\overline u$ and integrate the inaginary part,
$$\partial_t\int\varphi_R\,|u(t)|^2=-\Im\int u_{xx}\,\varphi_R\,\overline u\mp \Im\, \int \frac{a^2}{t^{1\pm 2ia^2}}\,\varphi_R\,\overline{u}\,\overline{u}- \Im\int\frac{F(u)}{t}\,\varphi_R\,\overline{u}$$
$$=\Im\int u_{x}\,\partial_x\varphi_R \,\overline u\mp \Im\, \int \frac{a^2}{t^{1\pm 2ia^2}}\,\varphi_R\,\overline{u}\,\overline{u}- \Im\int\frac{F(u)}{t}\,\varphi_R\,\overline{u}$$
$$\leq \|\partial_x u\|_{L^2}\left(\int(\partial_x\varphi_R)^2\, |u(t)|^2\right)^\frac 12+ \frac{a^2}{t}\int\varphi_R\,|u(t)|^2+\frac{\|u\|_{L^\infty}+\|u\|_{L^\infty}^2}{t}\int\varphi_R\,|u(t)|^2.$$
Therefore, by using $(\partial_x\varphi)^2\leq C\varphi$ and Sobolev embeddings, 
$$\partial_t\left(\int\varphi_R\,|u(t)|^2\right)^\frac 12\leq C(a,u(1))+\frac{C(a,u(1))}{t}\left(\int\varphi_R\,|u(t)|^2\right)^\frac 12,$$
so
$$\left(\int\varphi_R\,|u(t)|^2\right)^\frac 12\leq C(a,u(1))\,t^{\tilde C(a,u(1))}.$$
The estimate is uniformly in $R$, and the Lemma follows by letting $R$ goes to infinity.
\end{proof}

In particular, the Lemma insures us that $\hat u(t)\in H^1$, so in particular $\hat u(t)$ is continuous and the existence of $\hat u(t,0)$ is justified. 
Now we shall get informations on the zero-mode of $u(t)$, via estimates on $w$ the solution of \eqref{GP}:
$$iw_t+w_{xx}=\mp\D\frac 1t\left(|a+w|^2-a^2\right)(a+w).$$
We shall use the following conservation law
\begin{equation}\label{conslaw}
\partial_t\int (| w+a|^2-a^2)=0,
\end{equation}
obtained by multiplying \eqref{GP} by $\overline{ w}+a$ and by taking the imaginary part. We integrate in space \eqref{GP} to get
$$i\partial_t\int w\pm\int\frac 1t(| w+a|^2-a^2)( w+a)=0.$$
By using \eqref{conslaw} we get the evolution of the zero-modes
$$\int w(t)-\int w(t_0)= \pm i  \int_{t_0}^t\int(| w(\tau)+a|^2-a^2)( w(\tau)+a)dx\,\frac{d\tau}{\tau}$$
$$=\pm i a\int(| w(t_0)+a|^2-a^2)dx\,\log{\frac{t}{t_0}}\pm i \int_{t_0}^t\int(| w(\tau)|^2+2a\Re w(\tau)) w(\tau)dx\,\frac{d\tau}{\tau}.$$
The Strichartz estimates imply that the part coming from the cubic power of $ w$ is bounded in time, so we can bound the second term,
$$\left|\int_{t_0}^t\int(| w(\tau)|^2+2a\Re w(\tau)) w(\tau)dx\,\frac{d\tau}{\tau}\right|\leq C(a)\|u(t_0)\|_{X_{t_0}}+2a\| w\|^2_{L^\infty((t_0,t), L^2)}\,\log{\frac{t}{t_0}}$$
$$\leq  C(a)\|u(t_0)\|_{X_{t_0}}+C(a)\|u(t_0)\|_{X_{t_0}}^2\,\log{\frac{t}{t_0}}.$$
Therefore we get a logarithmic upper-bound for $\int w(t)$, and implicitly for $\hat u(t,0)$. This growth is sharp provided that
$$C(a)\| w(t_0)\|_{X_{t_0}}^2=C(a,t_0)\left(\| w(t_0)\|_{L^2}^2+\|\hat w(t_0)\|_{L^\infty(\xi^2\leq \frac {1}{t_0})}^2\right)<a\left|\int(| w(t_0)+a|^2-a^2)dx\right|,$$
for which a sufficient condition is
$$C(a,t_0)\left(\| w(t_0)\|_{L^2}^2+\|\hat w(t_0)\|_{L^\infty(\xi^2\leq \frac {1}{t_0})}^2\right)<\left|\int \Re w(t_0)dx\right|.
$$
We get also a logarithmic growth for $\Im \int w(t)$, provided that
$\int(| w(t_0)+a|^2-a^2)dx>0.$


\begin{thebibliography}{99999}

\bibitem{AlKuOk} S. V. Alekseenko, P. A. Kuibin, V. L. Okulov, 
Theory of concentrated vortices. An introduction, 
Springer, Berlin, 2007.

\bibitem{BV0}  V. Banica, L. Vega, 
On the Dirac delta as initial condition for nonlinear Schr\"odinger equations, Ann. I. H. Poincar\'e, An. Non. Lin. 25 (2008), no. 4, 697-711.

\bibitem{BV}  V. Banica, L. Vega, 
On the stability of a singular vortex dynamics, 
Comm. Math. Phys. 286 (2009), no. 2, 593-627.

\bibitem{Ba} G.K. Batchelor, 
An Introduction to the Fluid Dynamics, Cambridge University Press, Cambridge,
1967.

\bibitem{Ca} R. Carles, 
Geometric Optics and Long Range Scattering for One-Dimensional Nonlinear Schr\"odinger Equations,
Comm. Math. Phys. 220 (2001), no. 1, 41-67.

\bibitem{Ch} M. Christ, 
Power series solution of a nonlinear Schr\"odinger equation, 
Mathematical aspects of nonlinear dispersive equations, 131-155, Ann. of Math. Stud., 163, Princeton Univ. Press, Princeton, NJ, 2007.

\bibitem{DaR}L. S. Da Rios, 
On the motion of an unbounded fluid with a vortex filament of any shape, 
Rend. Circ. Mat. Palermo 22 (1906), 117.

\bibitem{Pa} F. de la Hoz,
Self-similar solutions for the 1-D Schr\"odinger map on the Hyperbolic plane,
Math. Z. (2007) 257:61-80.

\bibitem{DGV} F. de la Hoz, C. Garcia-Cervera, L. Vega,
A numerical study of the self-similar solutions of the Schroedinger Map, 
ArXiv:0812.1011.

\bibitem{Gr} A. Gr\"unrock, Bi- and trilinear Schr\"odinger estimates in one space dimension with applications to cubic NLS and DNLS, Int. Math. Res. Not. 2005, no. 41, 2525Ð2558 

\bibitem{GNT}  S. Gustafson, K. Nakanishi, T.-P. Tsai, 
Global dispersive solutions for the Gross-Pitaevskii equation in two and three dimensions, 
 Ann. Henri Poincar\'e  8  (2007),  no. 7, 1303--1331. 

\bibitem{GNT2}  S. Gustafson, K. Nakanishi, T.-P. Tsai, 
Scattering theory for the Gross-Pitaevskii equation in three dimensions, 
Comm. in Contemp. Maths., to appear.

\bibitem{SJL} S. Guti\'errez, J. Rivas, L. Vega,  
Formation of singularities and self-similar vortex motion under the localized induction approximation, 
Comm. Part. Diff. Eq. 28 (2003), 927-968.

\bibitem{Ha} H. Hasimoto, 
A soliton on a vortex filament, 
J. Fluid Mech. 51 (1972), 477-485.

\bibitem{HaNa} N. Hayashi, P. Naumkin, 
Domain and range of the modified wave operator for Schr\"odinger equations with critical nonlinearity, 
Comm. Math. Phys. 267 (2006), no. 2, 477-492.


\bibitem{NaShVeZe} A. Nahmod, J. Shatah, L. Vega, C. Zeng, 
Schr\"odinger Maps and their associated Frame Systems, Int. Math. Res. Not. 21, no. 21 (2007), Art. ID rnm088, 29 pp. 58J45.


\bibitem{Ozawa} T. Ozawa, 
Long range scattering for nonlinear Schr\"odinger equations in one space dimension, 
Commun. Math. Phys. 139, no.3 (1991), 479-493. 

\bibitem{Ri}  R.L. Ricca, 
The contributions of Da Rios and Levi-Civita to asymptotic potential theory and vortex filament dynamics, 
Fluid Dynam. Res. 18, no. 5 (1996), 245--268. 

\bibitem{Sa} P.G. Saffman, 
Vortex dynamics, Cambridge Monographs on Mechanics and Applied Mathematics, 
Cambridge U. Press, New York, 1992.

\bibitem{VaVe} A. Vargas, L. Vega, 
Global wellposedness of 1D cubic nonlinear Schr\"odinger equation for data with
infinity $L^2$ norm, J. Math. Pures Appl. 80, no 10 (2001), 1029-1044.


\end{thebibliography}
\end{document}